\crefname{theorem}{Theorem}{Theorems}
\crefname{lemma}{Lemma}{Lemmas}
\crefname{claim}{Claim}{Claims}
\crefname{prop}{Proposition}{Propositions}
\crefname{figure}{Figure}{Figures}
\newtheorem{theorem}{Theorem}
\newtheorem{lemma}[theorem]{Lemma}
\newtheorem{claim}[theorem]{Claim}
\newtheorem{corollary}[theorem]{Corollary}
\newtheorem{prop}[theorem]{Proposition}
\newtheorem*{claim*}{Claim}
\theoremstyle{remark}
\newtheorem*{remark*}{Remark}
\theoremstyle{definition}
\newtheorem{definition}[theorem]{Definition}
\numberwithin{theorem}{section}
\renewcommand{\phi}{\varphi}
\newcommand{\eps}{\varepsilon}
\newcommand{\cE}{\mathcal E}
\newcommand{\cH}{\mathcal H}
\newcommand{\E}{\mathbb{E}}
\newcommand{\Var}{\operatorname{Var}}
\def\1{\mathbbm{1}}
\renewcommand{\le}{\leqslant}
\renewcommand{\ge}{\geqslant}
\renewcommand{\P}{\mathbb{P}}
\newcommand{\R}{\mathbb R}
\newcommand{\ceil}[1]{\lceil#1\rceil}
\newcommand{\floor}[1]{\lfloor#1\rfloor}
\newcommand{\ang}[1]{\langle#1\rangle}
\newcommand{\bin}{\{\pm 1\}}
\newcommand{\var}{\textup{Var}}
\newcommand{\unif}{U([-1,1])}
\title
{Improved Lower bound for hypercube edge slicing} 
\author{Lisa Sauermann}
\thanks{University of Bonn, Germany. Email: \texttt{sauermann@iam.uni-bonn.de}. Supported by the DFG Heisenberg Program.}
\author{Zixuan Xu}
\thanks{Massachusetts Institute of Technology. Email: \texttt{zixuanxu@mit.edu}}
\begin{document}

\begin{abstract}
How many hyperplanes in $\R^n$ are needed in order to slice every edge of the $n$-dimensional hypercube with vertex set $\{\pm 1\}^n$? Here, we say that a hyperplane $H\subseteq \R^n$ slices an edge of the hypercube if it contains exactly one interior point of the edge. The problem of determining the minimum possible size of a collection of hyperplanes in $\R^n$, such that every edge of the hypercube is sliced by at least one of these hyperplanes, is more than 50 years old and has been studied by many researchers. We prove that, for sufficiently large $n$, at least $\Omega(n^{13/19}\log^{-32/19}n)$ hyperplanes are needed, improving upon the best previous lower bound $\Omega(n^{2/3}\log^{-4/3}n)$ due to Klein.
\end{abstract}
		
\maketitle

\vspace{-2em}

\section{Introduction}

The $n$-dimensional hypercube has vertex set $\bin^n$ and edges between pairs of vertices differing in exactly one coordinate.
We say that a hyperplane $H\subseteq \R^n$ slices an edge of the $n$-dimensional hypercube if $H$ contains exactly one interior point of the edge. In other words, letting $vv'$ be an edge with endpoints $v,v'\in \bin^n$ and $H$ be a hyperplane defined as $H = \{x\in \R^n\mid \ang{a,x} = b \}$, the hyperplane $H$ is said to slice the edge $vv'$ if $\ang{a,v} - b$ and $\ang{a,v'} - b$ are both nonzero and have different signs.

It is natural to ask how many hyperplanes in $\R^n$ are needed in order to slice every edge of the $n$-dimensional hypercube. More formally, what is the minimum possible size of a hyperplane collection $\cH$ in $\R^n$ such that every edge of the $n$-dimensional hypercube $\bin^n$ is sliced by at least one hyperplane in $\cH$? This problem has attracted the attention of many researchers over the past 50 years \cite{ONeil71, grunbaum72, EmamyKhansary86,AHLSWEDE1990137,saks1993,GotsmanLinial94}, and has applications to the study of perceptrons \cite{ONeil71} and of threshold circuits for parity \cite{paturi1990threshold,yehuda2021slicing}. 

It is conjectured that the minimum possible size of a hyperplane collection $\cH$ slicing every edge of the $n$-dimensional hypercube is $\Theta(n)$, and the current best known upper bound is $\ceil{(5/6)n}$ given by a construction of Paterson (see \cite{saks1993}). So far, this conjecture is only known to be true under some very specific assumptions on the form of the hyperplanes in $\cH$. It is not hard to show that at least $n$ hyperplanes are needed if for the hyperplanes one can choose normal vectors all of whose entries are non-negative, see \cite{AHLSWEDE1990137} or \cite{GotsmanLinial94}. The conjecture is also known to be true in the case where all hyperplanes in $\cH$ have normal vectors in $\{1,-1\}^n$ following a result of Alon--Bergmann--Coppersmith--Odlyzko~\cite{alon88balance}. Recently, it was shown in \cite{sauermann2025nondegeneratehyperplanecovershypercube} that the conjecture also holds when all hyperplanes have normal vectors in $\{-C, \dots, C\}^n$ for a fixed integer $C>0$.

In the general setting, O'Neil~\cite{ONeil71} showed that any collection $\cH$ of hyperplanes slicing every edge of the hypercube must have size $|\cH|\ge \Omega(\sqrt{n})$. Yehuda and Yehudayoff~\cite{yehuda2021slicing} improved this lower bound to $|\cH|\ge \Omega(n^{0.51})$. A few years ago, Klein~\cite{klein2022slicing} showed $|\cH|\ge \Omega(n^{2/3}\log^{-4/3}n)$, which has been the best known lower bound.

In this paper, we improve upon this, showing the following new lower bound.

\begin{theorem}\label{thm:main}
    Let $\cH$ be a collection of hyperplanes such that every edge of the $n$-dimensional hypercube $\bin^n$ is sliced by some hyperplane in $\cH$. For $n$ sufficiently large, we have
    \[|\cH| > \frac{n^{13/19}}{10^{11}\log^{32/19}n}.\]
\end{theorem}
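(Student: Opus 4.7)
Write $\cH = \{H_1, \dots, H_m\}$ with $H_i = \{x \in \R^n : \langle a_i, x\rangle = b_i\}$, normalized so that $\|a_i\|_\infty = 1$. For each $i \in [m]$ and each direction $j \in [n]$, let $N_{i,j}$ be the number of hypercube edges in direction $j$ sliced by $H_i$. Since every edge must be sliced, $\sum_{i,j} N_{i,j} \ge n \cdot 2^{n-1}$. The edge between $v$ and $v - 2 v_j e_j$ can only be sliced by $H_i$ if $|\langle a_i, v\rangle - b_i| < 2|a_{i,j}|$, so $N_{i,j}$ is at most the number of hypercube vertices in the slab of half-width $2|a_{i,j}|$ around $H_i$. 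The classical Littlewood--Offord bound then yields $N_{i,j} = O(|a_{i,j}| \cdot 2^n / \|a_i\|_2)$, which summed over $j$ gives only the O'Neil bound $m = \Omega(\sqrt n)$. Klein's $n^{2/3}$ bound refines this via a second-moment argument, and to beat it one must track finer structural information about each $a_i$.

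\textbf{Strategy for the improvement.} The plan is to partition the coordinates of each $a_i$ into dyadic buckets $B_{i,k} = \{j : 2^{-k-1} < |a_{i,j}| \le 2^{-k}\}$ and apply a strengthened Hal\'asz/Esseen-type anti-concentration inequality that simultaneously exploits the slab half-width $|a_{i,j}|$ (controlling which edges can be sliced) and the $\ell^2$-mass of the lighter coordinates outside $B_{i,k}$ (controlling concentration around $b_i$). This produces a two-parameter bound on $N_{i,j}$ that reflects a genuine tension: a hyperplane that slices many edge-directions must concentrate its normal's mass on a few heavy coordinates, but good anti-concentration requires the mass to be spread out. Splitting $\sum_{i,j} N_{i,j}$ according to the dyadic scale $k$ of the slicing coordinate, one obtains an inequality of the form
\[
n \cdot 2^{n-1} \;\le\; \sum_{k} \sum_{i} |B_{i,k}| \cdot U_{i,k},
\]
with $U_{i,k}$ the refined per-edge slab bound at scale $k$. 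H\"older's inequality across buckets and hyperplanes, followed by an optimization balancing the ``few-heavy-coordinate'' and ``many-heavy-coordinate'' regimes, should yield the exponent $13/19$, with the slack absorbed into the polylogarithmic factor $\log^{32/19} n$.

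\textbf{Main obstacle.} The crux will be proving the refined anti-concentration bound with a sharp enough exponent to push past $2/3$. Klein's argument is essentially a one-step refinement of Littlewood--Offord via a second moment; I anticipate that reaching $13/19$ requires an iterated refinement, for instance by conditioning on the signs of the heaviest coordinates of $a_i$ and recursing on the residual vector in lower effective dimension, or by combining Hal\'asz's inequality with a bucket-by-bucket covering of the near-zero level set of $\langle a_i, \cdot\rangle - b_i$. Controlling the polylogarithmic losses at each iteration so that they accumulate precisely to $\log^{32/19} n$, and verifying that the dyadic optimization pins down the exponent $13/19$ uniformly across all possible bucket profiles, will be the most delicate bookkeeping; it is also plausible that an additional case split is needed to handle hyperplanes whose normal vectors are approximately supported on very few coordinates, where alternative arithmetic arguments \`a la Alon--Bergmann--Coppersmith--Odlyzko may need to be invoked.
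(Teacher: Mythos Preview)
Your proposal has a genuine structural gap: the counting inequality $\sum_{i,j} N_{i,j} \ge n\cdot 2^{n-1}$ cannot be pushed past $m=\Omega(\sqrt{n})$ by \emph{any} refinement of the per-hyperplane upper bound, because that bound is already tight. The balanced hyperplane $\{x:\sum_j x_j=0\}$ slices $\Theta(\sqrt{n}\cdot 2^n)$ edges, so $\sum_j N_{i,j}$ can genuinely be $\Theta(\sqrt{n}\cdot 2^n)$ for every $i$. Your dyadic-bucket and Hal\'asz refinements only sharpen the upper bound on individual $N_{i,j}$'s, but once you sum over $j$ you are back at $O(\sqrt{n}\cdot 2^n)$ per hyperplane, and the global inequality gives nothing beyond O'Neil. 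The H\"older step you sketch has no cross-hyperplane constraint to bite on: the only coupling between hyperplanes in your setup is the additive inequality you started with, and H\"older applied to a sum with a tight termwise bound cannot manufacture a better exponent.

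More fundamentally, your description of Klein's argument is off, and this matters because it led you down the wrong track. Klein does \emph{not} refine the edge-counting argument; he abandons it. Both Klein and the present paper work by \emph{constructing} a point $X\in[-1/2,1/2]^n$ that is far (distance $\gtrsim\sqrt{\log n}$) from all but $O(\sqrt n)$ of the hyperplanes, then randomly rounding $X$ to a vertex $z\in\{\pm1\}^n$ and arguing via Littlewood--Offord that a random edge at $z$ dodges the remaining nearby hyperplanes. Klein builds $X$ as a random linear combination $X=\rho_0\sum_i \alpha_i v_i$ of the (suitably normalized) normal vectors; the paper's improvement is a second stage $X_1=\rho_1\sum_{i\text{ bad}}\beta_i v_i$ that pushes $X_0$ away from the hyperplanes it accidentally landed near. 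The exponent $13/19$ comes from optimizing the scales $\rho_0,\rho_1$ and a threshold $\delta$ against a case analysis (bad / activated / heavy / light / near-bad indices), together with a matrix decomposition from Yehuda--Yehudayoff handling normals with large entries. None of this is visible from the edge-counting viewpoint, and your proposal contains no mechanism for finding such a far point.
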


\cref{thm:main} also implies a new lower bound for the number of wires in depth-two threshold circuits computing the parity function, see \cite{yehuda2021slicing} for more details.

\subsection{Paper organization} In \cref{sec:overview}, we give a high-level overview of our proof highlighting the key steps and intuition. In \cref{sec:prelim}, we present definitions and probabilistic lemmas that will be used in our proof. Then in \cref{sec:main-proof}, we give the proof of \cref{thm:main} assuming a key proposition (\cref{prop:construct-X}). Finally in \cref{sec:prop-proof}, we provide the proof of \cref{prop:construct-X}, completing the proof of \cref{thm:main}.

\section{Proof Overview}\label{sec:overview}

We start by outlining the main proof strategy, highlighting the key components of the proof. Our proof combines ideas from \cite{yehuda2021slicing} and \cite{klein2022slicing} together with several new ideas.

Let $\cH = \{H_1,\dots, H_k\}$ be the collection of hyperplanes, where $H_i = \{x\in \R^n \mid \ang{v_i, x} = \lambda_i\}$ has normal vector $v_i = (v_{i1}, \dots, v_{in})\in \R^n$ for each $i\in [k]$. Note that rescaling the coefficients of the hyperplane equation does not change the slicing properties of the hyperplane, so we may further assume without loss of generality that $||v_i||_2 = 1$ for all $i\in [k]$. Then our goal is to find a point $X\in [-1/2,1/2]^n$ that is far away from most of the hyperplanes. To be slightly more precise, we want to find $X\in [-1/2,1/2]^n$ such that 
\[|\ang{X, v_i} - \lambda_i| > 5\sqrt{\log n}\]
holds for all but at most $O(\sqrt{n})$ indices $i\in [k]$. We can then randomly ``round '' $X\in [-1/2,1/2]^n$ to a vertex $z\in \bin^n$ (to be precise, we take the entries $z_1,\dots,z_n\in \bin$ of $z$ to be independent, with distributions chosen in such a way that $\E[z_i]=X_i$ for each $i$), and consider a uniformly random edge adjacent to $z$. We claim that with positive probability this edge is not sliced by any of the hyperplanes in $\cH$. 

Indeed, with high probability, ``rounding'' $X\in [-1/2,1/2]^n$ to a vertex $z\in \bin^n$ changes the value of $\ang{X,v_i}$ by no more than $3\sqrt{\log n}$ for each $i\in [k]$. So with high probability $z\in \bin^n$ is at distance at least $2\sqrt{\log n}$ from all but at most $O(\sqrt{n})$ hyperplanes in $\cH$ (more precisely, we have $|\ang{z, v_i} - \lambda_i| > 2\sqrt{\log n}$ for all of the $k-O(\sqrt{n})$ hyperplanes $H_i\in \cH$ with $|\ang{X, v_i} - \lambda_i| > 5\sqrt{\log n}$). A hyperplane $H_i\in \cH$ at distance at least $2\sqrt{\log n}$ from $z$ cannot slice any edge adjacent to $z$, because any adjacent vertex $z'\in \bin^n$ must satisfy $|\ang{z,v_i} - \ang{z', v_i}|\le 2$, so $\ang{z,v_i} - \lambda_i$ and $\ang{z', v_i}-\lambda_i$ cannot have different signs. So the only hyperplanes in $\cH$ possibly slicing any edge adjacent to $z$ are the ones that are close to $X$. Using the Erd\H{o}s--Littlewood--Offord theorem, one can show that a given hyperplane slices a uniformly random edge adjacent to the random rounding $z$ of a given point $X\in [-1/2,1/2]^n$ with probability at most $O(1/\sqrt{n})$. Since there are at most $O(\sqrt{n})$ hyperplanes close to $X$, picking appropriate constants, a union bound implies that with positive probability a random edge adjacent to $z$ is not sliced by any of the hyperplanes in $\cH$. The details of this part are given in \cref{sec:main-proof}.

So far, the strategy described above was employed in both \cite{yehuda2021slicing} and \cite{klein2022slicing}. The key part of the problem is to construct a point $X\in [-1/2,1/2]^n$ that is close to only few of the hyperplanes in $\cH$. For simplicity, let us assume that for each $i\in [k]$, the hyperplane normal vector $v_i$ satisfies $||v_i||_\infty \le 10/\sqrt{n}$. In \cite{yehuda2021slicing}, $X$ was chosen by applying Bang's lemma (originating from his solution \cite{bang1951} to Tarski’s plank problem) to the $k\times n$ matrix with rows $v_1,v_2,\dots,v_n$. On the other hand, in \cite{klein2022slicing}, $X$ is taken to be a random linear combination of the vectors $v_1,v_2,\dots,v_n$. More specifically, 
\[X = 10^{-2}\sqrt{\frac{n}{k\log n}}\sum_{i\in [k]}\alpha_iv_i, \quad\quad  \alpha_i\sim U[-1,1]\text{ independent for all } i\in [k],\]
where $U[-1,1]$ denotes the uniform distribution over the real interval $[-1,1]$ and $\alpha_1,\dots, \alpha_k$ are independent. By our assumption that $||v_i||_\infty \le 10/\sqrt{n}$ for each $i\in [k]$ and the Chernoff bound, with high probability $X$ is contained in $[-1/2,1/2]^n$. For any $i\in [k]$, the probability that $|\ang{X,v_i} - \lambda_i|\le 5\sqrt{\log n}$ is at most $O(\sqrt{k}\log n/\sqrt{n})$. So the expected number of such indices $i$ is $O(k^{3/2}\log n/\sqrt{n})$. If $k = \Tilde{O}(n^{2/3})$ where $\Tilde{\Omega}(\cdot)$ ignores logarithm factors, then $O(k^{3/2}\log n/\sqrt{n}) \le  O(\sqrt{n})$. Thus, Klein's proof in \cite{klein2022slicing} gives a bound of $k = \Tilde{\Omega}(n^{2/3})$ for the number of hyperplanes in the collection $\cH$.

In our proof, we further develop Klein's strategy from~\cite{klein2022slicing}. We first define $X_0$ similarly as in Klein's~\cite{klein2022slicing} proof:
\[X_0 = \rho_0\sum_{i\in [k]}\alpha_iv_i, \quad\quad  \alpha_i\sim U[-1,1] \text{ independent for all } i\in [k]\]
for a suitably chosen parameter $\rho_0$. Then for an outcome of $X_0$, we say that $i\in [k]$ is \emph{bad} if $|\ang{X_0, v_i} - \lambda_i| \le 10\sqrt{\log n}$, i.e.\ if $H_i$ is ``close'' to $X_0$. We then take $X$ to be $X = X_0+X_1$ where 
\[X_1 = \rho_1 \sum_{\substack{i\in [k]\\i\text{ bad}}}\beta_i v_i,\quad\quad \beta_i\sim U[-1,1]\text{ independent for all bad }i\in [k]\]
for a suitably chosen parameter $\rho_1$. Intuitively, we sample $\beta_i$ for the bad indices $i\in [k]$ and add $\rho_1\beta_i v_i$ to $X_0$ to attempt to push $X_0$ away from the hyperplanes it is close to. However, the main difficulty comes from analyzing the expected number of hyperplanes close to $X$, since adding the term $X_1$ might push $X_0$ towards hyperplanes that were originally far away. The full details are given in \cref{sec:prop-proof}.

To remove the extra assumption that $||v_i||_\infty\le 10/\sqrt{n}$ for all $i\in [k]$, we use the matrix decomposition given in \cite{yehuda2021slicing} and the notion of vectors containing many scales introduced in \cite{yehuda2021slicing} (see \cref{sec:prelim}).

\section{Preliminaries}\label{sec:prelim}

\noindent \emph{Notation.} We write $[n] = \{1,\dots, n\}$. All logarithms are to base $e$ unless specified otherwise. We use $U([a,b])$ to denote the uniform distribution over the real interval $[a,b]$. For a vector $v\in \R^n$ and a subset $S\subseteq [n]$, we use $v|_S\in \R^S$ to denote the vector $v$ restricted to the subset of coordinates in $S$. For two sets $A$ and $B$, we use $A^B$ to denote the set of vectors indexed by elements in $B$ with entries in $A$. When writing $x\sim \bin^n$, we always mean $x$ is uniformly distributed in $\bin^n$.

\medskip

We first recall the useful notion of vectors containing many scales introduced in \cite{yehuda2021slicing}. For convenience, we will state the definitions and corresponding lemmas with explicit constants. The proofs of all the lemmas stated in this section are either standard or follow directly from \cite{yehuda2021slicing}, but for the reader's convenience, we collect them in \cref{app:probability-proofs}.

\begin{definition}[Distribution $\mu_p$]
    Let $p\in \R^n$ be a vector with $||p||_\infty\le 1$. We define the distribution $\mu_p$ over $x\in \bin^n$ by taking independent random entries $x_1,\dots, x_n\in \bin$ with distributions given by 
    \[x_i = \begin{cases} 1 & \text{with probability } (1+p_i)/2 \\ -1 & \text{with probability } (1-p_i)/2 \end{cases} \quad \text{for all } i\in [n].\]
\end{definition}

In particular, note that that for any $p\in \R^n$ with $||p||_\infty\le 1$, we have $\E_{x\sim \mu_p}[x_i] = p_i$ for all $i\in [n]$.

\begin{definition}[\cite{yehuda2021slicing}]\label{def:scales}
    For $\delta>0$ and a positive integer $s$, we say that a vector $v\in \R^n$ \emph{contains $s$ scales of size at least $\delta$} if there exists disjoint subsets $I_1,\dots, I_s\subseteq [n]$ such that $||v|_{I_s}||_2 \ge \delta$ and for $i\in [s-1]$ we have
    \[||v|_{I_i}||_2 \ge 100||v|_{I_{i+1}}||_2.\]
\end{definition}

The following lemma shows strong anticoncentration properties for vectors containing many scales.
\begin{lemma}[\cite{yehuda2021slicing}]\label{lem:many-scale-anticoncentration}
    For $\delta > 0$ and a positive integer $s\ge 100$, let $v\in \R^n$ be a vector containing $s$ scales of size at least $10\delta$. Then for any $b\in \R$ and uniformly random $x\in \bin^n$, we have 
    \[\P[|\ang{x,v} - b| \le \delta] \le e^{-s/100}.\]
\end{lemma}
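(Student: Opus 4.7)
The plan is to prove the anti-concentration estimate via Esseen's concentration inequality combined with a scale-by-scale lower bound on the characteristic function of $\ang{v,x}$.

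Concretely, Esseen's concentration inequality gives for any $b\in \R$,
\[
\P[|\ang{v,x}-b|\le \delta] \;\le\; C_0\, \delta \int_{-1/\delta}^{1/\delta} |\phi(t)|\,dt,
\qquad
\phi(t)=\prod_{j=1}^n \cos(v_j t),
\]
for a universal constant $C_0$. Using the pointwise inequality $|\cos\theta|^2 = 1 - \sin^2\theta \le \exp(-\sin^2\theta)$ coordinate by coordinate and the scale decomposition $\bigcup_i I_i$, one obtains
\[
|\phi(t)| \;\le\; \exp\Bigl(-\tfrac{1}{2}\sum_{i=1}^s \sum_{j\in I_i} \sin^2(v_j t)\Bigr),
\]
so the task reduces to lower bounding $\sum_j \sin^2(v_j t)$ on most of $[-1/\delta, 1/\delta]$.

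The core step is to show that, for each of the $s$ scales $I_i$ with $\|v|_{I_i}\|_2 \ge 10\delta$, the partial sum $\sum_{j\in I_i}\sin^2(v_j t)$ is bounded below by a positive universal constant on a large ``good set'' $G_i \subseteq [-1/\delta, 1/\delta]$, while the complementary ``bad sets'' $B_i := [-1/\delta,1/\delta]\setminus G_i$ have small Lebesgue measure. The key structural input is the factor-$100$ gap between consecutive scale norms, $\|v|_{I_i}\|_2 \ge 100\|v|_{I_{i+1}}\|_2 \ge 10\delta$, which ensures that the bad sets at different scales sit at geometrically separated frequency windows of $[-1/\delta,1/\delta]$ and thus do not accumulate. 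Combining across the $s$ scales, for almost every $t$ in $[-1/\delta,1/\delta]$ one gets $\sum_j \sin^2(v_j t) \ge c s$ for a universal $c>0$, hence $|\phi(t)|\le e^{-cs/2}$; integrating against the $C_0\delta$ prefactor yields a bound of the form $C\, e^{-c' s}$, which is at most $e^{-s/100}$ for suitable universal constants (the regime $s=100$ is the tight one since $e^{-1}$ already exceeds standard anti-concentration bounds, and larger $s$ is easier).

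The main obstacle is precisely the scale-by-scale lower bound and, in particular, quantifying the bad sets $B_i$ where all phases $v_j t$ for $j\in I_i$ happen to lie near integer multiples of $\pi$. Controlling these sets requires a dyadic decomposition of $[-1/\delta,1/\delta]$ in which each scale's bad set is shown to live in a distinct dyadic frequency window, using the factor-$100$ gap between consecutive $\|v|_{I_i}\|_2$. This is the step where the hierarchical structure of many scales from \cref{def:scales} is used in an essential way; without it, one only gets a Kolmogorov--Rogozin-type bound of order $s^{-1/2}$, which is vastly weaker than the target $e^{-s/100}$.
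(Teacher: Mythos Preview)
Your Fourier-analytic approach has a genuine gap at the decisive step. From Esseen's inequality you need $\int_{-1/\delta}^{1/\delta}|\phi(t)|\,dt \le e^{-cs}/\delta$, which forces $|\phi(t)|\le e^{-cs}$ on all of $[-1/\delta,1/\delta]$ except a set of \emph{exponentially} small measure. Your sketch only shows that each per-scale bad set $B_i=\{t:\sum_{j\in I_i}\sin^2(v_jt)<c\}$ occupies a small \emph{constant} fraction of $[-1/\delta,1/\delta]$; a Markov argument on $f(t)=\sum_i 1_{B_i}(t)$ then gives that on, say, half the interval at least $s/2$ scales are good. But on the remaining half you only know $|\phi(t)|\le 1$, so Esseen returns $C_0\delta\cdot\Theta(1/\delta)=\Theta(1)$, not $e^{-s/100}$. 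To rescue the argument you would need near-independence of the events $\{t\in B_i\}$ across scales so that $\bigl|\{t:\text{fewer than }s/2\text{ scales good}\}\bigr|\le e^{-cs}/\delta$. Your claim that the bad sets ``sit at geometrically separated frequency windows'' does not deliver this and is not correct as stated: if $I_i$ consists of a single coordinate $v_j=M_i$, its bad set is the \emph{periodic} set $\{t:|\sin(M_it)|<\sqrt c\}$, spread uniformly over the whole interval rather than localized in any window; and if $I_i$ consists of many coordinates each of size $\ll\delta$, the bad set is a neighborhood of $0$, so bad sets for several such scales nest and do accumulate there. The factor-$100$ gap is relevant, but converting it into the required near-independence of the $B_i$ is a substantial argument that your sketch does not supply.

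The paper's proof is entirely different and bypasses Fourier analysis. It introduces an auxiliary randomization: write $x_j=y_j\eps_h$ for $j\in I_h$, with $y\sim\bin^n$ and independent signs $\eps\in\bin^s$. Conditioning on $y$, block $h$ contributes $\eps_h Z_h$ where $Z_h=\langle y|_{I_h},v|_{I_h}\rangle$; a Paley--Zygmund bound gives $|Z_h|\in[\|v|_{I_h}\|_2/5,\,5\|v|_{I_h}\|_2]$ with probability $\ge 1/5$, and Chernoff guarantees this on a set $R$ of $\ge s/60$ scales with probability $\ge 1-e^{-s/20}$. On $R$ the factor-$100$ gap yields $|Z_r|\ge 4|Z_{r'}|$ for $r<r'$ in $R$, and then a short geometric-series argument shows that at most one sign pattern $\eps|_R\in\bin^R$ can place $\langle x,v\rangle$ within $\delta$ of $b$, giving a conditional bound of $2^{-|R|}\le 2^{-s/60}$. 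The scale gap is used here in a direct combinatorial way, not through any frequency-space decomposition.
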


We need the Chernoff--Hoeffding inequality.

\begin{lemma}[Chernoff--Hoeffding inequality~\cite{Hoeffding1963}]\label{lem:chernoff}
    Let $a_1,\dots,a_r,b_1,\dots,b_r$ be real numbers with $a_i<b_i$ for all $i\in [r]$, and let $X_1,\dots, X_r$ be independent random variables such that $a_i\le X_i\le b_i$ for all $i\in [r]$. Let $X = \sum_{i = 1}^r X_i$, then for all $t > 0$, we have
    \[\P[|X - \E[X]| \ge t] \le 2\exp\Big(-\frac{2t^2}{\sum_{i \in [r]}(b_i-a_i)^2}\Big).\]
\end{lemma}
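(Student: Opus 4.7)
The approach is the classical Chernoff moment-generating-function argument combined with Hoeffding's lemma. First I would establish the one-sided tail bound $\P[X - \E[X] \ge t]$; the two-sided bound then follows by applying the one-sided bound to $-X$ (whose summands $-X_i$ lie in $[-b_i, -a_i]$, which has the same length $b_i - a_i$) and taking a union bound, producing the factor of $2$.

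For the one-sided bound, I would introduce the centered variables $Y_i := X_i - \E[X_i]$, which are independent, have mean zero, and lie in intervals of length $b_i - a_i$. For any $\lambda > 0$, Markov's inequality applied to $e^{\lambda \sum_i Y_i}$ gives
\[\P\Big[\sum_{i=1}^r Y_i \ge t\Big] \le e^{-\lambda t} \prod_{i=1}^r \E[e^{\lambda Y_i}],\]
where the expectation factors by independence.

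The core technical ingredient is Hoeffding's lemma: if $Y$ is a zero-mean random variable supported in $[\alpha, \beta]$, then $\E[e^{\lambda Y}] \le \exp(\lambda^2 (\beta - \alpha)^2/8)$. I would prove this by the usual convexity trick: since $e^{\lambda y}$ is convex in $y$, one has $e^{\lambda y} \le \tfrac{\beta - y}{\beta - \alpha} e^{\lambda \alpha} + \tfrac{y - \alpha}{\beta - \alpha} e^{\lambda \beta}$ for $y \in [\alpha, \beta]$; taking expectations and using $\E[Y] = 0$, the bound reduces to $\E[e^{\lambda Y}] \le e^{\phi(u)}$ where $u = \lambda(\beta-\alpha)$ and $\phi$ is an explicit smooth function with $\phi(0) = \phi'(0) = 0$ and $\phi''(u) \le 1/4$ everywhere. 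A second-order Taylor expansion then yields $\phi(u) \le u^2/8$.

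Plugging this into the product and choosing $\lambda = 4t / \sum_i (b_i - a_i)^2$ to optimize the resulting exponent produces the desired one-sided bound $\exp(-2t^2 / \sum_i (b_i-a_i)^2)$. The only genuinely delicate step is the estimate $\phi'' \le 1/4$ underlying Hoeffding's lemma, since the rest is the exponential-moment method combined with a one-variable optimization. Given that the lemma is stated with a citation to Hoeffding's original paper and the excerpt describes the proofs of this section as standard, I expect the authors either to simply cite the classical reference or to relegate a short reproduction of the above calculation to their appendix.
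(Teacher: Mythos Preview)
Your proposal is correct and is exactly the standard proof of Hoeffding's inequality. The paper does not actually prove this lemma at all: it is stated with a citation to Hoeffding's original paper, and unlike the other probabilistic lemmas in that section, no proof is reproduced in the appendix. Your anticipation that the authors would simply cite the classical reference is therefore spot on.
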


We need the Erd\H{o}s--Littlewood--Offord theorem for biased linear forms.

\begin{theorem}[Erd\H{o}s--Littlewood--Offord]\label{thm:littlewood-offord}
    Let $v\in \R^n$ and $p\in \R^n$ with $||p||_\infty\le 1/2$. Let $t>0$ and let $m\ge 1$ be the number of indices $i\in [n]$ such that $|v_i|\ge t$. Then, for any $b\in \R$, for a random vector $x\sim \mu_p$, we have
    \[\P[|\ang{x,v} - b| < t] \le \frac{10}{\sqrt{m}}.\]
\end{theorem}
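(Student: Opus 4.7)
The plan is to reduce the biased case to the classical (uniform $\pm 1$) Erdős--Littlewood--Offord theorem via a mixture representation for biased Bernoulli variables. Let $I := \{i \in [n] : |v_i| \ge t\}$, so $|I| = m$; only these ``large'' coordinates drive the anticoncentration, and the remaining coordinates will be conditioned away. Observe also that by rescaling $v$ and $b$ by $1/t$, we may freely assume $t = 1$.

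The key step is the following coupling. Because $|p_i| \le 1/2$, the distribution $\mu_{p_i}$ can be written as a convex combination
\[ \mu_{p_i} \;=\; q_i \cdot \mathrm{Unif}(\{\pm 1\}) \;+\; (1-q_i)\,\delta_{\sigma_i}, \qquad q_i := 1 - |p_i| \ge \tfrac{1}{2}, \qquad \sigma_i := \operatorname{sign}(p_i). \]
Equivalently, I can sample $x \sim \mu_p$ by first drawing a random set $T \subseteq [n]$ where each $i$ is included independently with probability $q_i$, then setting $x_i$ to an independent uniform $\pm 1$ if $i \in T$ and to $\sigma_i$ if $i \notin T$.

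Conditional on $T$ together with $\{x_i : i \in T \setminus I\}$, the linear form $\ang{x,v}$ equals $\sum_{i \in T \cap I} x_i v_i + c$ for some deterministic constant $c$. Since the remaining $x_i$ are i.i.d.\ uniform on $\{\pm 1\}$ and $|v_i| \ge t$ for $i \in I$, the classical ELO theorem of Erdős bounds
\[ \P\!\left[\,|\ang{x,v} - b| < t \,\Big|\, T,\, \{x_i\}_{i \in T \setminus I}\,\right] \;\le\; \binom{|T \cap I|}{\lfloor |T \cap I|/2 \rfloor}\Big/ 2^{|T \cap I|} \;\le\; 1/\sqrt{|T \cap I|} \]
whenever $|T \cap I| \ge 1$, and trivially by $1$ otherwise.

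It then remains to take expectation over $T$. Since $\E[|T \cap I|] = \sum_{i \in I} q_i \ge m/2$, the Chernoff--Hoeffding bound (\cref{lem:chernoff}) gives $\P[|T \cap I| < m/4] \le e^{-\Omega(m)}$. Splitting on this event yields
\[ \P[|\ang{x,v} - b| < t] \;\le\; e^{-\Omega(m)} + 2/\sqrt{m} \;\le\; 10/\sqrt{m} \]
once $m$ is sufficiently large; for bounded $m$, the stated bound $10/\sqrt{m} \ge 1$ is vacuous. The main piece of actual work is setting up the mixture coupling so that the classical ELO theorem applies to a genuine uniform linear form on a large random subset; once that is in place, the remainder is a routine Chernoff estimate and bookkeeping of constants.
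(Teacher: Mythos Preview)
Your proposal is correct and follows essentially the same route as the paper's proof: both write $\mu_p$ as a mixture of a uniform $\pm 1$ distribution (on a random set $T$, the paper's $S=[n]\setminus(J_+\cup J_-)$) and deterministic values, condition on this set, and apply the classical Erd\H{o}s--Littlewood--Offord bound to the surviving large coordinates. The only cosmetic difference is that the paper uses Chebyshev rather than Chernoff to control $\P[|T\cap I|<m/4]$, which yields the explicit $4/m$ term and makes the final inequality $2/\sqrt{m}+4/m\le 10/\sqrt{m}$ hold for all $m\ge 1$ without a separate small-$m$ case.
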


We also need a Littlewood--Offord type lemma for continuous random variables, which follows from a more general statement on the anticoncentration of sums of log-concave independent random variables due to Bobkov and Chistyakov~\cite{BobkovChistyakov2015-ConcentrationFunctions}.

\begin{lemma}\label{lem:continuous-littlewood-offord}
    Let $a_1,\dots, a_n, b_1,\dots, b_n$ be real numbers with $a_i<b_i$ for all $i\in [n]$, and let $X_i\sim U[a_i, b_i]$ be independent random variables for all $i\in [n]$. Define $X = \sum_{i = 1}^n X_i$. Then for any $t>0$ and $b\in \R$, we have
    \[\P[|X - b| < t]\le \frac{2t}{\sqrt{\var(X)}}.\]
\end{lemma}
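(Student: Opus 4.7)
The plan is to reduce the statement to a pointwise bound on the density $f$ of $X$. Since $\P[|X-b|<t] \le 2t\|f\|_\infty$ for any $b \in \R$, it suffices to prove $\|f\|_\infty \le 1/\sqrt{\var(X)}$. After translating so that $\E[X]=0$, each $X_i - \E[X_i]$ is symmetric around $0$ and has a log-concave (in fact uniform) density. By the Pr\'ekopa--Leindler inequality, convolutions of log-concave densities are log-concave, so $f$ is log-concave; combined with symmetry this forces $f$ to be unimodal with $\|f\|_\infty = f(0)$.

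Writing $M = f(0)$ and $\sigma^2 = \var(X)$, the heart of the argument will be the sharp second-moment inequality $\sigma^2 \ge 1/(12M^2)$, which gives $M \le 1/\sqrt{12\sigma^2} \le 1/\sigma$ and immediately completes the proof via the display in the previous paragraph. I would prove this inequality by comparing $f$ against the uniform density $\tilde f = M \cdot \mathbf{1}_{[-1/(2M),\, 1/(2M)]}$ of height $M$, which is symmetric, integrates to $1$, and has variance exactly $1/(12M^2)$. The difference $g = f - \tilde f$ has mean zero, is nonpositive on $[-1/(2M),\, 1/(2M)]$ (since $f \le M = \tilde f$ there) and nonnegative outside. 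A short mass-transport computation using that $x^2 \le (2M)^{-2}$ inside the small interval and $x^2 \ge (2M)^{-2}$ outside then gives $\int x^2 g(x)\,dx \ge 0$, which is the desired variance inequality.

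The main obstacle is really just the log-concavity and unimodality of $f$, since all subsequent steps are elementary. A tidy alternative would be to invoke directly the Bobkov--Chistyakov concentration-function bound for sums of independent log-concave random variables, which already appears in the form $\P[|X-b|<t] \le Ct/\sqrt{\var(X)}$ for an absolute constant $C$ and specializes to our setting once one verifies that the constant is at most $2$ (or one weakens the constant in the lemma accordingly).
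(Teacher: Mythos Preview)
Your argument is correct. The reduction to $\|f\|_\infty \le 1/\sqrt{\var(X)}$ via $\P[|X-b|<t]\le 2t\|f\|_\infty$, the log-concavity of $f$ from Pr\'ekopa, and the mass-transport comparison with the uniform density of height $M=f(0)$ all go through, and in fact you obtain the sharper $M\le 1/\sqrt{12\,\var(X)}$ before discarding the factor $\sqrt{12}$.

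The paper takes a different route: it simply quotes the Bobkov--Chistyakov concentration-function inequality for sums of independent log-concave random variables,
\[
\sup_{x\in\R}\P\big[X\in[x,x+t]\big]\le \frac{t}{\sqrt{\var(X)+t^2/12}},
\]
and plugs in an interval of length $2t$. Your ``alternative'' suggestion at the end is thus exactly what the paper does. By contrast, your main argument is essentially a self-contained re-derivation of the relevant special case of Bobkov--Chistyakov. What you gain is an elementary proof with no black-box citation (and a visible sharp constant $1/\sqrt{12}$); what the paper's approach buys is a one-line deduction that also works without the symmetry of the summands, since Bobkov--Chistyakov does not require it.
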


\section{Proof of \cref{thm:main}}\label{sec:main-proof}

In this section, we present the proof of \cref{thm:main}. Note that we can assume without loss of generality that all hyperplanes are described by equations with nonzero coefficients. Indeed, given any collection $\cH$ of hyperplanes slicing all hypercube edges, one can first ``wiggle'' the hyperplanes so that all coefficients in the hyperplane equations become nonzero.

First, we recall a result on decomposing the coefficient matrix of the hyperplane collection introduced in \cite{yehuda2021slicing}. For matrices $A,A'\in \R^{k\times n}$ with rows $a_1,\dots, a_k$ and $a_1',\dots, a_k'$ respectively, we say that $A'$ is a \emph{row rescaling} of $A$ if there exists $\phi_1,\dots, \phi_k\in \R\setminus\{0\}$ such that $a_i' = \phi_i a_i$ for all $i\in [k]$.

\begin{prop}[Matrix Decomposition \protect{\cite[Lemma 12]{yehuda2021slicing}}]\label{prop:decomp}
Let $A$ be a real $k\times n$ matrix where all entries are  nonzero. There exists a partition of the row indices $[k] = K_1\sqcup K_2$ and a partition of the column indices $[n] = N_1\sqcup N_2$ with $|N_2|\le n/2$ such that for some row rescaling $A'$ of $A$ the following holds:
\begin{enumerate}
    \item For every $i\in [k]$, the $i$-th row vector $a'_i\in \R^n$ of $A'$ satisfies $||a'_i|_{N_1}||_2 = 1$.

    \item For every $j\in N_1$, the $j$-th column vector $a'_{*j}\in \R^k$ of $A'$ satisfies $||a'_{*j}|_{K_1}||_2\le  10^4(k\log n/n)^{1/2}$.

    \item For every $i\in K_2$, the restriction $a_i'|_{N_2}$ of the $i$-th row vector $a_i'$ of $A'$ to $N_2$ contains at least $\ceil{250\log n}$ scales of size at least $100$.
\end{enumerate}
\end{prop}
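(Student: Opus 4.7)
The plan is to construct the decomposition via an iterative ``peeling'' procedure that grows $N_2$ and $K_2$ together. I initialize $N_1 = [n]$, $N_2 = \emptyset$, $K_1 = [k]$, $K_2 = \emptyset$, and begin by rescaling each row $a_i$ so that $||a'_i|_{N_1}||_2 = 1$; since all entries are nonzero, this is always well-defined. At each subsequent step I check whether Property (2) holds for the current $N_1$ and $K_1$. If it does, the procedure terminates. Otherwise, I collect a nonempty block $J \subseteq N_1$ of ``heavy'' columns, i.e., columns $j$ with $||a'_{*j}|_{K_1}||_2 > 10^4\sqrt{k\log n/n}$, move $J$ into $N_2$, and re-rescale each row of $K_1$ so that its $\ell_2$-norm on the new $N_1$ is again $1$.

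Throughout the procedure, for each row $i$ I maintain a list of previously removed blocks $J^i_1, J^i_2, \ldots$ on which the rescaled row had norm at least $100$ at the moment of their removal, chosen so that successive norms decrease by a factor of $100$. Once this list reaches length $s = \lceil 250\log n\rceil$, I move row $i$ from $K_1$ to $K_2$. For every row in $K_2$ I re-rescale at the very end so that $||a'_i|_{N_1^{\text{final}}}||_2 = 1$; since this only multiplies the row by a factor $\ge 1$, the recorded scale sizes can only grow, so Property (3) is preserved. Property (1) then holds by construction, Property (2) by the stopping rule, and Property (3) by the bookkeeping.

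The main obstacle is showing $|N_2| \le n/2$. The basic potential identity is
\[
\sum_{j\in N_1}||a'_{*j}|_{K_1}||_2^2 = \sum_{i\in K_1}||a'_i|_{N_1}||_2^2 = |K_1| \le k,
\]
so at any given snapshot at most $n/(10^8\log n)$ columns can be heavy. The delicate point is that re-rescaling after removing a block $J$ generally \emph{increases} the column norms on the surviving columns, and can create new heavy columns out of previously light ones. The correct accounting charges each such rescaling-induced norm increase for row $i$ to a new scale being registered on row $i$, using that each row can support at most $s = \lceil 250\log n\rceil$ scales before being evacuated to $K_2$. Combining the instantaneous bound on heavy columns with the per-row scale budget should give a total bound on $|N_2|$ that is easily at most $n/2$ for $n$ sufficiently large.

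Termination is straightforward: every iteration either strictly decreases the potential $\sum_{j\in N_1}||a'_{*j}|_{K_1}||_2^2$ by a fixed positive amount or moves at least one row from $K_1$ to $K_2$, and both events can occur only finitely often. The real work lies in the quantitative bookkeeping sketched above, which is where one must be careful to match the constants $10^4$, $100$, and $250\log n$ appearing in the three properties.
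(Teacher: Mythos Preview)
Your iterative peeling procedure is the same skeleton as the paper's proof, but one implementation choice creates a real gap in the $|N_2|\le n/2$ argument. You renormalize \emph{every} row of $K_1$ after each block removal. With that convention, the potential $\sum_{j\in N_1}\|a'_{*j}|_{K_1}\|_2^2=\sum_{i\in K_1}\|a'_i|_{N_1}\|_2^2$ is reset to $|K_1|$ after every step, so the instantaneous bound ``at most $k/W^2$ heavy columns at any moment'' gives no control on the total $|N_2|$ over many iterations. Your proposed charging (``charge each rescaling-induced norm increase to a new scale being registered'') does not close this: a rescaling of row $i$ by a factor just above $1$ adds no scale, yet it still restores row $i$'s contribution to the potential in full. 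Such small rescalings can occur arbitrarily often and keep generating new heavy columns, so the per-row scale budget $s=\lceil 250\log n\rceil$ does not bound them. As written, nothing prevents $|N_2|$ from being much larger than $n/2$.

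The paper's fix is precisely to make the renormalization \emph{lazy}: a row $i\in K_1$ is renormalized only when $\|a'_i|_{N_1}\|_2^2$ drops below a fixed threshold $\tau=1/10001$. Then each renormalization increases the potential by at most $1$ and, because $\sqrt{(1-\tau)/\tau}=100$, automatically creates a new scale for that row; hence there are at most $kS$ renormalizations in total. Tracking the potential $\sum_{i\in K_1}\sum_{j\in N_1}(a'_{ij})^2$ now gives $|N_2|\cdot\tau W^2\le k(S+1)$, which yields $|N_2|\le n/2$. A side effect of lazy renormalization is that at termination rows may have $N_1$-norm as small as $\sqrt{\tau}$, so one must use the heaviness threshold $\tau W^2$ rather than $W^2$ so that the final Step-3 renormalization (by a factor at most $1/\sqrt{\tau}$) still leaves every $N_1$-column with $K_1$-norm at most $W$. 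Once you switch to lazy renormalization and adjust the threshold, the rest of your outline (including the scale bookkeeping and the final rescaling of $K_2$ rows) goes through and matches the paper's proof.
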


\begin{figure}
\centering
\begin{tikzpicture}
    
    \def\W{11.3}   
    \def\H{6}    
    \def\xsplit{6}   
    \def\ysplit{2.5} 
    
    \draw (\xsplit,0) -- (\xsplit,\H);      
    \draw (0,\ysplit) -- (\W,\ysplit);      
    
    \draw (0,\H) -- ++(-0.3,0) -- ++(0,-\H) -- ++(0.3,0);
    \draw (\W,\H) -- ++(0.3,0) -- ++(0,-\H) -- ++(-0.3,0);
    
    \node[above] at ($({\xsplit/2},\H)$) {$N_1$};
    \node[above] at ($({(\xsplit+\W)/2},\H)$) {$N_2$};
    
    \node[left]  at (-0.3,{\ysplit+1.6}) {$K_1$};
    \node[left]  at (-0.3,1.25) {$K_2$};
    
    \node[align=left] at ($({\xsplit/2},{\ysplit+1.6})$) {%
      row $\ell_2$-norm $=1$\\[2mm]
      column $\ell_2$-norm $\le 10^{4}\bigl(k\log n/n\bigr)^{1/2}$%
    };
    
    \node[align=left] at ($({\xsplit/2},{\ysplit-1.25})$) {%
      row $\ell_2$-norm $=1$%
    };
    
    \node[align=left] at ($({\xsplit + 0.53*(\W-\xsplit)},{\ysplit-1.25})$) {%
      each row contains at least \\
      $\ceil{250\log n}$ scales of size $\ge 100$%
    };
    
\end{tikzpicture}
    \caption{Illustration of the partition in the row rescaling $A'$ of $A$ in \cref{prop:decomp}}
    \label{fig:decomp}
\end{figure}
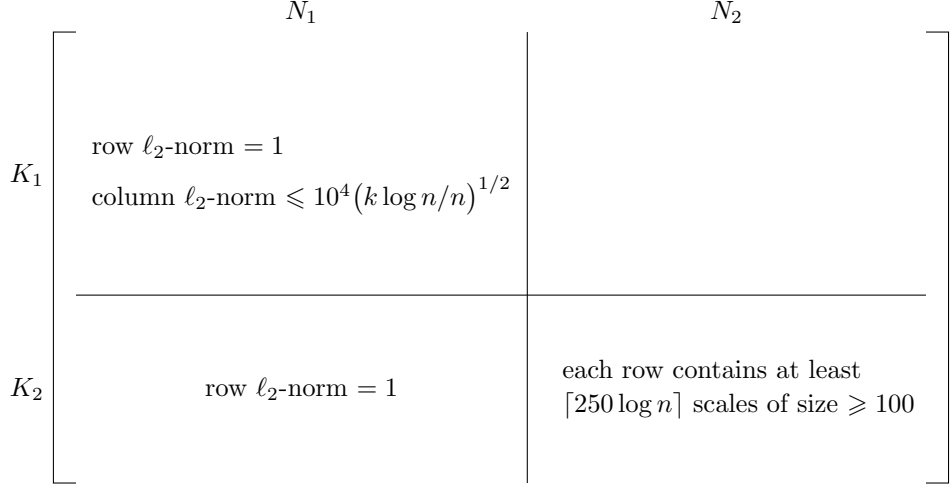

An illustration of the matrix decomposition in \cref{prop:decomp} can be found in \cref{fig:decomp}. \cref{prop:decomp} was proved in \cite{yehuda2021slicing}, but we include a proof in \cref{app:decomp} for completeness. We are now ready to present the proof of \cref{thm:main}.

\begin{proof}[Proof of \cref{thm:main}]
Let $n$ be sufficiently large, and let $\cH = \{H_1,\dots, H_k\}$ be the collection of hyperplanes. By slightly ``wiggling'' the hyperplanes, we may assume without loss of generality that every variable appears with nonzero coefficient in every hyperplane equation, and that the absolute values of all coefficients in all the hyperplane equations are distinct. Suppose for contradiction that $k\le 10^{-11}n^{13/19}\log^{-32/19}n$. Our goal is to show that there exists an edge of the hypercube $\bin^n$ that is not sliced by any of the hyperplanes in $\cH$. 

For each $i\in [k]$, we can write $H_i = \{x\in \R^n\mid \ang{a_i,x} - b_i = 0\}$ for some vector $a_i\in \R^n$ and some $b_i\in \R$. By our assumption, all entries of the vectors $a_1,\dots,a_k$ are nonzero. So we can apply \cref{prop:decomp} to the $k\times n$ matrix $A\in \R^{k\times n}$ with rows $a_1,\dots,a_k$. We obtain partitions $[k] = K_1\sqcup K_2$ and $[n] = N_1\sqcup N_2$ with $|N_2|\le n/2$ and a row rescaling $A'$ of $A$ satisfying the conditions in \cref{prop:decomp}. Since rescaling the hyperplane equations does not affect the hyperplanes and their slicing properties, we may assume without loss of generality that $A=A'$ (i.e.\ for each $i\in [k]$, we may assume that the original hyperplane equation $\ang{a_i,x} - b_i = 0$ describing $H_i$ is scaled in such a way that $a_i$ agrees with the $i$-th row of $A'$). 

Most of the work goes into the following proposition and we postpone its proof to \cref{sec:prop-proof}.

\begin{prop}\label{prop:construct-X}
For $m$ sufficiently large and $\ell\le 10^{-10}m^{13/19}\log^{-32/19}m$, consider a matrix $V\in \R^{\ell \times m}$ with the following conditions:
\begin{enumerate}
    \item For $i\in [\ell]$, the $i$-th row vector $v_i$ of $V$ satisfies $||v_i||_2 = 1$.
    \item For $j\in [m]$, the $j$-th column vector $v_{*j}$ of $V$ satisfies $||v_{*j}||_2\le 10^{-1}m^{-3/19}\log^{-13/38}m$.
\end{enumerate}
Then for any $\lambda\in \R^\ell$, there exists $X\in [-1/2, 1/2]^m$ such that the number of indices $i\in [\ell]$ satisfying $|\ang{v_i,X}-\lambda_i| \le 5\sqrt{\log m}$ is at most $\sqrt{m}/200$.
\end{prop}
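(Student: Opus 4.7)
Following the overview in \cref{sec:overview}, I would construct $X = X_0 + X_1$ in two stages: $X_0 := \rho_0 \sum_{i \in [\ell]} \alpha_i v_i$ with $\alpha_i \sim U([-1,1])$ i.i.d., and $X_1 := \rho_1 \sum_{i \in B} \beta_i v_i$ with $\beta_i \sim U([-1,1])$ i.i.d.\ (independent of the $\alpha$'s), supported on the bad set $B := \{i \in [\ell]: |\ang{X_0, v_i} - \lambda_i| \le 10\sqrt{\log m}\}$. The scale parameters $\rho_0,\rho_1$ will be chosen so that, with positive probability, $X \in [-1/2,1/2]^m$ and the number of indices $i$ with $|\ang{X, v_i} - \lambda_i|\le 5\sqrt{\log m}$ is at most $\sqrt{m}/200$.

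\textbf{First stage.} I take $\rho_0$ of order $m^{3/19}\log^{-3/19} m$ (with a small absolute constant), which is the largest scale for which \cref{lem:chernoff} gives $\P[|(X_0)_j| > 1/4] \le o(1/m)$ for each coordinate $j \in [m]$; the crucial input is the column-norm bound $\|v_{*j}\|_2 \le 10^{-1} m^{-3/19}\log^{-13/38}m$, which makes $\rho_0^2 \|v_{*j}\|_2^2 = O(1/\log m)$. A union bound then gives $X_0 \in [-1/4,1/4]^m$ with high probability. Because $\|v_i\|_2 = 1$, the linear form $\ang{X_0, v_i}$ contains the summand $\rho_0 \alpha_i$ with coefficient exactly $\rho_0$, so \cref{lem:continuous-littlewood-offord} yields $\P[i \in B] \le O(\sqrt{\log m}/\rho_0)$; hence $\E[|B|] \le O(\ell \sqrt{\log m}/\rho_0) = O(m^{10/19}\log^{-39/38} m)$, and Markov's inequality gives $|B| \le C m^{10/19}\log^{-39/38} m$ with high probability for some absolute constant $C$.

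\textbf{Second stage and bookkeeping.} Conditional on a good outcome of $X_0$, I sample $X_1$; \cref{lem:chernoff} again controls $(X_1)_j$ so that $X = X_0 + X_1 \in [-1/2,1/2]^m$ with high probability. I then partition the bad indices for $X$ into $B_1 := B \cap \{i : |\ang{X, v_i} - \lambda_i| \le 5\sqrt{\log m}\}$ (stay-bad) and $B_2 := ([\ell]\setminus B) \cap \{i : |\ang{X, v_i} - \lambda_i| \le 5\sqrt{\log m}\}$ (become-bad). For $B_1$: since for $i \in B$ the form $\ang{X_1, v_i}$ contains the summand $\rho_1 \beta_i$ with coefficient $\rho_1$, the same continuous-Littlewood--Offord argument yields $\E[|B_1| \mid X_0] \le O(|B|\sqrt{\log m}/\rho_1)$, which is $\le \sqrt{m}/400$ provided $\rho_1$ is at least of order $m^{1/38}\log^{-10/19} m$.

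\textbf{Main obstacle: controlling $|B_2|$.} For $i \notin B$ to fall into $B_2$ we need $|\ang{X_1, v_i}| \ge 5\sqrt{\log m}$; \cref{lem:chernoff} applied to $\ang{X_1, v_i} = \rho_1 \sum_{j \in B}\beta_j \ang{v_j, v_i}$ gives a probability bound of the form $2\exp\bigl(-25\log m/(2\rho_1^2 W_i)\bigr)$, where $W_i := \sum_{j \in B}\ang{v_j, v_i}^2$. Splitting at the threshold $T \asymp \log m/\rho_1^2$, the small-$W_i$ indices contribute $\ell\cdot m^{-\Omega(1)}$ (negligible), and the number of large-$W_i$ indices is at most $\sum_i W_i / T \le |B|\,\|V\|_{\mathrm{op}}^2\, \rho_1^2/\log m$. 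This is where the difficulty lies: the naive bound $\|V\|_{\mathrm{op}}^2 \le \|V\|_F^2 = \ell$ produces an upper bound on $\rho_1$ that \emph{clashes} with the lower bound forced by $B_1$. To close the gap I would average over the randomness of $X_0$ and exploit the sharper \cref{lem:continuous-littlewood-offord} estimate $\P[j \in B] \le O(\sqrt{\log m}/(\rho_0 \|V v_j\|_2))$: inserting this in $\E[\sum_i W_i] = \sum_j \P[j \in B]\|V v_j\|_2^2$ cancels one factor of $\|V v_j\|_2$ and trades $|B|$ against the spectral concentration of the columns of $V^TV_B$. Should this still fall short, a multi-stage refinement $X = X_0 + X_1 + X_2 + \dots$ peeling off successive layers of bad indices should complete the argument. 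The exponents $13/19$, $3/19$, $32/19$, $13/38$ appearing in the hypothesis are expected to be precisely the ones that make these competing constraints simultaneously satisfiable.
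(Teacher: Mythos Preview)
Your setup is right: the two-stage construction $X=X_0+X_1$, the scale $\rho_0\asymp m^{3/19}\log^{-3/19}m$, the bad set $B$, and the treatment of $B_1$ via \cref{lem:continuous-littlewood-offord} all match the paper. The gap is entirely in your handling of $B_2$, and your own calculation already shows why: bounding the number of ``large-$W_i$'' indices by $\E[\sum_i W_i]/T$ forces $\rho_1\lesssim m^{-1/76}$, while $B_1$ forces $\rho_1\gtrsim m^{1/38}$. These constraints are genuinely incompatible, and neither of your suggested patches fixes this.

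The cancellation you propose, inserting $\P[j\in B]\le O(\sqrt{\log m}/(\rho_0\sqrt{S_j}))$ into $\E[\sum_i W_i]=\sum_j \P[j\in B]\,S_j$, only yields $O(\sqrt{\log m}/\rho_0)\sum_j\sqrt{S_j}$, and there is no useful global bound on $\sum_j\sqrt{S_j}$ beyond Cauchy--Schwarz. More fundamentally, counting large-$W_i$ indices discards the information that such an index must \emph{also} land in a window of width $O(\sqrt{\log m})$ for $\ang{X_1,v_i}$, which by anticoncentration costs an extra factor $O(\sqrt{\log m}/(\rho_1\sqrt{W_i}))$; and it discards the correlation between ``$W_i$ is large'' and ``$\ang{X_0,v_i}$ is near $\lambda_i$'', both of which are events in the $X_0$-randomness. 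The paper's proof exploits both. It introduces a dyadic notion of \emph{$t$-activated} (meaning $|\ang{X_0,v_j}-\lambda_j|\le 10t\sqrt{\log m}$ and $\rho_1^2 W_j>3t^2/4$), so that a non-bad close index must be $t$-activated for some $t$, and then the key step is an \emph{almost-independence} lemma bounding $\P[i\text{ bad and }|\ang{X_0,v_j}-\lambda_j|\le 10t\sqrt{\log m}]$ for pairs with $|\ang{v_i,v_j}|\le 0.9$, obtained by a two-variable analysis in $(\alpha_i,\alpha_j)$. This lets one bound $\E[W_j\cdot 1_{\cE_1(j,t)}]$ (not just $\E[W_j]$), and the resulting estimate $\P[j\text{ activated and not near bad}]=O(\rho_1^2\sqrt{S_j}\log m/\rho_0^2)$ is then combined via $\min(A\sqrt{S_j},B/\sqrt{S_j})\le\sqrt{AB}$ with the bound $\P[j\text{ close and activated}]=O(\log^2 m/(\rho_0\sqrt{S_j}))$. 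Separate arguments handle the complementary cases where $j$ is \emph{heavy} ($S_j$ large) or \emph{near bad} (some bad $i$ has $|\ang{v_i,v_j}|>0.9$). A multi-stage refinement is not what closes the gap; this more refined single-pass analysis does, and it pins down $\rho_1=m^{1/19}\log^{-1/19}m$, not $m^{1/38}$.
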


\cref{prop:construct-X} shows that given a set of $\ell$ hyperplanes in $\R^m$, if the coefficient matrix given by the hyperplane equations satisfies certain conditions, then there exists a point $X\in [-1/2, 1/2]^m$ that has distance at least $5\sqrt{\log m}$ from all but at most $\sqrt{m}/200$ hyperplanes. Assuming \cref{prop:construct-X}, we have the following key lemma.

\begin{lemma}\label{lem:key-lemma}
        There exists $X\in [-1/2, 1/2]^{N_1}$ and $w\in \bin^{N_2}$ such that:
    \begin{itemize}
        \item[(i)] The number of indices $i\in K_1$ satisfying $|\ang{a_i|_{N_1},X}+\ang{a_i|_{N_2},w}- b_i| \le 4\sqrt{\log n}$ is at most $\sqrt{n}/200$. \label{item:X-K1} 

        \item[(ii)] For all $i\in K_2$, we have $|\ang{a_i|_{N_2}, w} - b_i| > 2\sqrt{n}$. \label{item:X-K2}

    \end{itemize}
\end{lemma}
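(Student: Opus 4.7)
The plan is to decouple conditions (i) and (ii): first pick $w\in \bin^{N_2}$ to satisfy (ii), and then, for a fixed such $w$, apply \cref{prop:construct-X} to the $K_1\times N_1$ submatrix with shifted targets $\lambda_i=b_i-\ang{a_i|_{N_2},w}$ for $i\in K_1$. The key observation making this decoupling work is that (ii) does not depend on $X$, while for $i\in K_1$ the inner product $\ang{a_i|_{N_2},w}$ appearing in (i) is just a constant once $w$ is fixed, and can be absorbed into the right-hand side.

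For the choice of $w$, take $w$ uniform on $\bin^{N_2}$. For each $i\in K_2$, Property~(3) of \cref{prop:decomp} guarantees that $a_i|_{N_2}$ contains $s=\ceil{250\log n}$ scales of size at least $100$. Because consecutive scale sizes grow by a factor of at least $100$, discarding the $t=O(\log n/\log 10^4)$ smallest scales leaves $s'\ge 249\log n$ scales of size at least $100^{\,t+1}\ge 20\sqrt{n}$. Then \cref{lem:many-scale-anticoncentration} with $\delta=2\sqrt{n}$ yields
\[\P\bigl[|\ang{a_i|_{N_2},w}-b_i|\le 2\sqrt{n}\bigr]\le e^{-s'/100}\le n^{-2.49}.\]
Union-bounding over the $|K_2|\le k\le n$ indices shows that (ii) fails with probability at most $n^{-1.49}<1$, so some $w\in \bin^{N_2}$ satisfies (ii).

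Now fix such a $w$ and set $m=|N_1|\in [n/2,n]$ and $\ell=|K_1|\le k$. Let $V\in \R^{\ell\times m}$ be the matrix with rows $v_i=a_i|_{N_1}$ for $i\in K_1$, and put $\lambda_i=b_i-\ang{a_i|_{N_2},w}$. The hypotheses of \cref{prop:construct-X} are then routine to check: $||v_i||_2=1$ by Property~(1); Property~(2) gives $||v_{*j}||_2\le 10^4(k\log n/n)^{1/2}$, and substituting $k\le 10^{-11}n^{13/19}\log^{-32/19}n$ turns this into $\le 10^{-3/2}n^{-3/19}\log^{-13/38}n \le 10^{-1}m^{-3/19}\log^{-13/38}m$ (using $m\le n$); finally $\ell\le 10^{-11}n^{13/19}\log^{-32/19}n\le 10^{-10}m^{13/19}\log^{-32/19}m$, since $m\ge n/2$ and $2^{-13/19}>10^{-1}$. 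Thus \cref{prop:construct-X} produces $X\in [-1/2,1/2]^{N_1}$ such that at most $\sqrt{m}/200\le \sqrt{n}/200$ indices $i\in K_1$ satisfy $|\ang{v_i,X}-\lambda_i|\le 5\sqrt{\log m}$. For $n$ large enough, $m\ge n/2$ gives $5\sqrt{\log m}\ge 4\sqrt{\log n}$, so the same count of bad indices holds at the smaller threshold $4\sqrt{\log n}$, which is precisely (i) for the chosen pair $(X,w)$.

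The main obstacle is essentially bookkeeping. One must check that the many-scales structure provided by Property~(3) is strong enough to reach the rather large threshold $2\sqrt{n}$ in (ii) — the geometric growth of scales rescues us here despite the smallest guaranteed scale being only of size $100$ — and one must verify that the decomposition's quantitative bounds line up with the precise numerical constants $10^{-10}$ and $10^{-1}$ demanded by \cref{prop:construct-X}. Once these two bookkeeping steps are in place, the lemma follows from a single union bound plus one invocation of \cref{prop:construct-X}.
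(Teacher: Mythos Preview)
Your proof is correct and follows essentially the same approach as the paper: first choose $w$ via a union bound using the many-scales anticoncentration lemma (after trading a few of the smallest scales to boost the minimum scale size to $20\sqrt{n}$), then apply \cref{prop:construct-X} to the $K_1\times N_1$ block with shifted targets. The only cosmetic differences are in the constants retained after discarding scales (you keep $\ge 249\log n$ scales, the paper keeps $\ge \lceil 200\log n\rceil$), but the logic and all the numerical checks line up with the paper's argument.
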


\begin{proof}
We start by finding $w\in \bin^{N_2}$ satisfying (ii), this argument is identical to the proof of \cite[Claim~13]{yehuda2021slicing}. Let us consider a uniformly random vector $w\in \bin^{N_2}$. Recall that for each $i\in K_2$, by the conditions in \cref{prop:decomp}, the vector $a_i|_{N_2}$ contains at least $\ceil{250\log n}$ scales of size at least $100$. Thus, by the definition of scales, $a_i|_{N_2}$ contains at least $\ceil{250\log n}-\ceil{\log_{100}(20\sqrt{n})} \ge \ceil{200\log n}$ scales of size at least $20\sqrt{n}$. Now, by \cref{lem:many-scale-anticoncentration} we have 
\[\P[|\ang{a_i|_{N_2}, w} - b_i| \le 2\sqrt{n}] \le e^{-\ceil{200\log n} /100} \le e^{-2\log n} \le \frac{1}{n^2}.\]
Taking a union bound over all $i\in K_2$, we can conclude that there exists $w\in \bin^{N_2}$ with $|\ang{a_i|_{N_2}, w} - b_i| > 2\sqrt{n}$ for all $i\in K_2$. This means that (ii) holds.

Now, we can find the desired $X\in [-1/2, 1/2]^{N_1}$ by applying \cref{prop:construct-X} to the submatrix $A[K_1\times N_1]$ and $\lambda\in \R^{K_1}$ defined by taking $\lambda_i = b_i - \ang{a_i|_{N_2}, w}$ for all $i\in K_1$. First note that for $m:=|N_1|=n-|N_2|$ and $\ell:=|K_1|$ we have $n/2\le m\le n$ and $\ell\le k\le 10^{-11}n^{13/19}\log^{-32/19}n$. Therefore, the assumptions (1) and (2) in \cref{prop:construct-X} hold for the matrix $A[K_1\times N_1]$, because of conditions (1) and (2) in \cref{prop:decomp} (recalling that $A'=A$). In particular, for assumption (2), note that for each $j\in N_1$, we have $||a_{*j}|_{K_1}||_2\le 10^4(k\log n/n)^{1/2}\le 10^{-3/2} n^{-3/19}\log^{-13/38}n\le 10^{-1}m^{-3/19}\log^{-13/38}m$. Thus, by \cref{prop:construct-X}, we can find a point $X\in [-1/2, 1/2]^{N_1}$ such that for all but at most $\sqrt{m}/200\le \sqrt{n}/200$ indices $i\in K_1$ we have $|\ang{a_i|_{N_1},X}-\lambda_i| > 5\sqrt{\log m}$. For all of these indices $i$, we then have $|\ang{a_i,x} - b_i| = |\ang{a_i|_{N_1},X} + \ang{a_i|_{N_2},w} - b_i|=|\ang{a_i|_{N_1},X}-\lambda_i|> 5\sqrt{\log m}\ge 4\sqrt{\log n}$. Therefore (i) holds.
\end{proof}

Let us now fix $X\in [-1/2, 1/2]^{N_1}$ and $w\in \bin^{N_2}$ as in \cref{lem:key-lemma}. Let us consider a random vector $y\sim \mu_X$ and, independently from $y$, a uniformly random index $h\in N_1$. Finally, define $z\in \bin^n$ by taking $z|_{N_1}=y$ and $z|_{N_2} = w$, and let $z'\in \bin^n$ be obtained from $z$ flipping coordinate $h$. We will show that with positive probability, the hypercube edge $zz'$ is not sliced by any of the hyperplanes in $\cH$.

First, recall that in order for $zz'$ to be sliced by a hyperplane $H_i\in \cH$, the expressions $\ang{a_i,z} - b_i$ and $\ang{a_i,z'} - b_i$ must both be nonzero and have different signs. As $|(\ang{a_i,z} - b_i)-(\ang{a_i,z'} - b_i)|=2|a_{ih}|$, this in particular implies that $|\ang{a_i,z} - b_i|<2|a_{ih}|$. So it suffices to show that with positive probability we have $|\ang{a_i,z} - b_i|\ge 2|a_{ih}|$ for all $i\in [k]$.

For $i\in K_2$, we always have
\[|\ang{a_i,z} - b_i|=|\ang{a_i|_{N_1},y} +\ang{a_i|_{N_2},w} - b_i|\ge |\ang{a_i|_{N_2},w} - b_i|-|\ang{a_i|_{N_1},y}|\ge 2\sqrt{n}-\sqrt{n}>2\ge 2|a_{ih}|,\]
using (ii) and the fact that $|\ang{a_i|_{N_1},y}|\le ||a_i|_{N_1}||_2\cdot ||y||_2=1\cdot \sqrt{|N_1|}\le \sqrt{n}$ by the Cauchy-Schwarz inequality.

Let 
\[K_1^*:= \Big\{i\in K_1 \,\Big|\, |\ang{a_i|_{N_1},X}+\ang{a_i|_{N_2},w}- b_i| \le 4\sqrt{\log n}\Big\},\] 
then by (i) we have $|K_1^*|\le \sqrt{n}/200$. We first show that any hyperplane $i\in K_1\setminus K_1^*$ is very unlikely to satisfy $|\ang{a_i,z} - b_i|< 2|a_{ih}|$.

\begin{claim}\label{claim:prob-far-hyperplane-slice}
    For any $i\in K_1\setminus K_1^*$, we have 
    \[\P\Big[|\ang{a_i,z} - b_i|< 2|a_{ih}|\Big]\le \frac{2}{n^4}.\]
\end{claim}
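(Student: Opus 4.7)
The plan is to decompose $\ang{a_i,z} - b_i = \ang{a_i|_{N_1}, y} + c_i$ where $c_i := \ang{a_i|_{N_2}, w} - b_i$ is a deterministic shift, and observe that by the definition of $K_1^*$, the deterministic ``target'' $\ang{a_i|_{N_1}, X} + c_i$ has absolute value exceeding $4\sqrt{\log n}$. Meanwhile $\ang{a_i|_{N_1}, y} = \sum_{j \in N_1} a_{ij} y_j$ is a sum of independent, bounded random variables with mean exactly $\ang{a_i|_{N_1}, X}$, so the Chernoff--Hoeffding inequality (\cref{lem:chernoff}) yields sub-Gaussian concentration around $\ang{a_i|_{N_1}, X}$.

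The key observation that essentially trivializes the role of $h$ is that since $\|a_i|_{N_1}\|_2 = 1$, every coordinate satisfies $|a_{ij}| \le 1$, hence $2|a_{ih}| \le 2 \ll 4\sqrt{\log n}$ for $n$ sufficiently large. Therefore, regardless of the outcome of $h$, the event $|\ang{a_i,z} - b_i| < 2|a_{ih}|$ forces
\[
|\ang{a_i|_{N_1}, y} - \ang{a_i|_{N_1}, X}| \;>\; 4\sqrt{\log n} - 2|a_{ih}| \;\ge\; 4\sqrt{\log n} - 2
\]
by the reverse triangle inequality.

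Applying \cref{lem:chernoff} to the variables $a_{ij} y_j$ (each with range of length $2|a_{ij}|$), and using $\sum_{j \in N_1} (2|a_{ij}|)^2 = 4\|a_i|_{N_1}\|_2^2 = 4$, we obtain
\[
\P\Bigl[|\ang{a_i|_{N_1}, y} - \ang{a_i|_{N_1}, X}| > 4\sqrt{\log n} - 2\Bigr] \;\le\; 2\exp\Bigl(-\tfrac{(4\sqrt{\log n}-2)^2}{2}\Bigr) \;\le\; \frac{2}{n^4},
\]
where the final inequality holds for $n$ large enough since $(4\sqrt{\log n}-2)^2/2 \ge 4\log n$ eventually. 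Since this bound is uniform in $h$, integrating over the independent uniform choice of $h \in N_1$ gives the claim.

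There is no serious obstacle here: the proof is essentially a direct application of Hoeffding's inequality, and the argument is clean precisely because the row normalization $\|a_i|_{N_1}\|_2 = 1$ simultaneously controls both the Hoeffding variance proxy and the pointwise bound $|a_{ih}| \le 1$ needed to absorb the $2|a_{ih}|$ slack into the $4\sqrt{\log n}$ buffer guaranteed by $i \notin K_1^*$.
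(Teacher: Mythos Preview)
Your proof is correct and follows essentially the same approach as the paper: decompose $\ang{a_i,z}-b_i$, use $i\notin K_1^*$ together with $|a_{ih}|\le \|a_i|_{N_1}\|_2=1$ to force a large deviation of $\ang{a_i|_{N_1},y}$ from its mean $\ang{a_i|_{N_1},X}$, and apply the Chernoff--Hoeffding inequality with variance proxy $4\|a_i|_{N_1}\|_2^2=4$. The only cosmetic difference is that the paper simplifies $2|a_{ih}|\le 2<\sqrt{\log n}$ to obtain a deviation of at least $3\sqrt{\log n}$, whereas you keep the deviation as $4\sqrt{\log n}-2$; both yield the bound $2/n^4$ for large $n$.
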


\begin{proof}
Note that $|\ang{a_i|_{N_1},X}+\ang{a_i|_{N_2},w}- b_i| > 4\sqrt{\log n}$, since $i\not\in K_1^*$. Whenever $|\ang{a_i,z} - b_i|< 2|a_{ih}|$, we in particular have $|\ang{a_i|_{N_1},y}+\ang{a_i|_{N_2},w}- b_i|=|\ang{a_i,z} - b_i|< 2|a_{ih}|\le 2<\sqrt{\log n}$ and therefore
\[| \ang{a_i|_{N_1},X}-\ang{a_i|_{N_1},y}| \ge |\ang{a_i|_{N_1},X}+\ang{a_i|_{N_2},w}- b_i|-|\ang{a_i|_{N_1},y}+\ang{a_i|_{N_2},w}- b_i|\ge 3\sqrt{\log n}.\]
Noting that $\E[\ang{a_i|_{N_1},y}]=\ang{a_i|_{N_1},X}$ as $y\sim \mu_X$, and using \cref{lem:chernoff}, we can conclude (also recalling $||a_i|_{N_1}||_2 = 1$)
    \[
        \P\Big[|\ang{a_i,z} - b_i|< 2|a_{ih}|\Big]\le \P\Big[| \ang{a_i|_{N_1},y}-\ang{a_i|_{N_1},X}|\ge 3\sqrt{\log n}\Big] \le 2e^{-(3\sqrt{\log n})^2/2} \le 2e^{-4\log n} = \frac{2}{n^4}. \qedhere
    \]
\end{proof}

On the other hand, the following claim gives a weaker general bound on the probability for an index $i\in K_1$ (in particular, an index $i\in K_1^*$) to satisfy $|\ang{a_i,z} - b_i|< 2|a_{ih}|$.

\begin{claim}\label{claim:prob-close-hyperplane-slice}
    For any $i\in K_1$, we have
    \[\P\Big[|\ang{a_i,z} - b_i|< 2|a_{ih}|\Big]\le \frac{100}{\sqrt{n}}.\]
\end{claim}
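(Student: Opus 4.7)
The plan is to average over the random index $h$ and invoke the Erdős–Littlewood–Offord theorem rank-by-rank. Setting $a := a_i|_{N_1}$ and $c := b_i - \ang{a_i|_{N_2}, w}$, and recalling that $z|_{N_1}=y\sim \mu_X$ with $X\in[-1/2,1/2]^{N_1}$ and $z|_{N_2}=w$, the event $\{|\ang{a_i,z}-b_i|<2|a_{ih}|\}$ is exactly $\{|\ang{a,y}-c|<2|a_h|\}$ for $h\in N_1$. So the target is
\[\P\bigl[|\ang{a,y}-c|<2|a_h|\bigr] \;=\; \frac{1}{|N_1|}\sum_{j\in N_1}\P_y\bigl[|\ang{a,y}-c|<2|a_j|\bigr].\]

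First I would sort $N_1$ so that $|a_{j_1}|\ge |a_{j_2}|\ge\dots\ge |a_{j_{|N_1|}}|$. For each $\ell\ge 1$, at least $\ell$ coordinates of $a$ have absolute value at least $|a_{j_\ell}|$, so \cref{thm:littlewood-offord} (applied with threshold $t=|a_{j_\ell}|$ and bias vector $p=X$, using $\|X\|_\infty\le 1/2$) gives
\[\P_y\bigl[|\ang{a,y}-b|<|a_{j_\ell}|\bigr] \;\le\; \frac{10}{\sqrt{\ell}}\]
for every $b\in\R$. Next I would inflate the interval of half-width $|a_{j_\ell}|$ to one of half-width $2|a_{j_\ell}|$ by a $3$-piece cover: writing $(c-2|a_{j_\ell}|,c+2|a_{j_\ell}|) = (c-2|a_{j_\ell}|,c)\cup\{c\}\cup(c,c+2|a_{j_\ell}|)$ and applying the above bound with $b=c-|a_{j_\ell}|$, $b=c$ (to handle the singleton via $\P_y[\ang{a,y}=c]\le \P_y[|\ang{a,y}-c|<|a_{j_\ell}|]$), and $b=c+|a_{j_\ell}|$, one obtains
\[\P_y\bigl[|\ang{a,y}-c|<2|a_{j_\ell}|\bigr]\;\le\; \frac{30}{\sqrt{\ell}}.\]

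Finally I would sum over $\ell$ and use $\sum_{\ell=1}^{|N_1|}1/\sqrt{\ell}\le 2\sqrt{|N_1|}$ together with $|N_1|\ge n/2$ (from $|N_2|\le n/2$ in \cref{prop:decomp}):
\[\P\bigl[|\ang{a_i,z}-b_i|<2|a_{ih}|\bigr] \;\le\; \frac{1}{|N_1|}\sum_{\ell=1}^{|N_1|}\frac{30}{\sqrt{\ell}} \;\le\; \frac{60}{\sqrt{|N_1|}} \;\le\; \frac{60\sqrt{2}}{\sqrt{n}} \;<\; \frac{100}{\sqrt{n}}.\]

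I don't anticipate a serious obstacle; the only slightly non-obvious choice is to organize the sum by rank $\ell$ of $|a_j|$ rather than by $|a_j|$ itself. This sidesteps the annoyance that the natural parameter $m(2|a_j|)$ for a direct application of \cref{thm:littlewood-offord} can vanish when $|a_j|$ carries a large share of the $\ell_2$ mass of $a$, while using the threshold $|a_{j_\ell}|$ guarantees the clean bound $m(|a_{j_\ell}|)\ge \ell$.
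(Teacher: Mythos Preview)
Your proposal is correct and follows essentially the same route as the paper: rewrite the event in terms of $y\sim\mu_X$ on $N_1$, sort the coordinates by $|a_{j}|$, apply \cref{thm:littlewood-offord} at each rank $\ell$ using that at least $\ell$ entries have absolute value $\ge |a_{j_\ell}|$, and sum $1/\sqrt{\ell}$. The only cosmetic difference is that the paper covers the interval of half-width $2|a_{ih}|$ with two sub-intervals (yielding $20/\sqrt{\ell}$), whereas you use a three-piece cover (yielding $30/\sqrt{\ell}$); both land safely under $100/\sqrt{n}$.
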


\begin{proof}
Fix $i\in K_1$. Setting $b = b_i - \ang{a_i|_{N_2}, w}$, we have $\ang{a_i,z} - b_i=\ang{a_i|_{N_1}, y}+\ang{a_i|_{N_2}, w}-b_i=\ang{a_i|_{N_1}, y}-b$. So we have $|\ang{a_i,z} - b_i|< 2|a_{ih}|$ if and only if $|\ang{a_i|_{N_1}, y}-b|< 2|a_{ih}|$.

For $m=1,2,\dots,|N_1|$, we consider the event that $|a_{ih}|$ is the $m$-th largest entry in $a_i|_{N_1}$ in absolute value. This event  only depends on the outcome of the uniformly  random index $h\in N_1$ and has probability $1/|N_1|$. Conditioning on this event, we can apply \cref{thm:littlewood-offord} to the random vector $y\sim \mu_X$, observing that the vector $a_i|_{N_1}$ has at least $m$ entries of absolute value at least $|a_{ih}|$. Applying \cref{thm:littlewood-offord} to two intervals of size $2|a_{ih}|$ covering an interval of size $4|a_{ih}|$, we obtain
    \[\P\Big[|\ang{a_i|_{N_1}, y} - b| < 2|a_{ih}| \,\Big|\, \text{$|a_{ih}|$ is the $m$-th largest absolute value in $a_i|_{N_1}$}\Big] \le \frac{20}{\sqrt{m}}.\]
and hence
\[\P\Big[|\ang{a_i|_{N_1}, y} - b| < 2|a_{ih}| \text{ and $|a_{ih}|$ is the $m$-th largest absolute value in $a_i|_{N_1}$}\Big] \le \frac{1}{|N_1|} \cdot \frac{20}{\sqrt{m}}.\]
Summing this over all $m=1,2,\dots,|N_1|$ yields
\[\P\Big[|\ang{a_i,z} - b_i|< 2|a_{ih}|\Big]=\P\Big[|\ang{a_i|_{N_1}, y} - b| < 2|a_{ih}|\Big]\le \sum_{m=1}^{|N_1|} \frac{1}{|N_1|} \cdot \frac{20}{\sqrt{m}}=\frac{20}{|N_1|}\sum_{m = 1}^{N_1}\frac{1}{\sqrt{m}}.
        \]
Therefore, recalling that $|N_1|=n-|N_2|\ge n/2$, we can conclude
\[
       \P\Big[|\ang{a_i,z} - b_i|< 2|a_{ih}|\Big]
        \le \frac{20}{|N_1|}\Big(1+\sum_{m = 2}^{N_1}\frac{1}{\sqrt{m}}\Big)\le \frac{20}{|N_1|} \Big(1+\int_1^{|N_1|}\frac{1}{\sqrt{m}}\Big) \le \frac{20}{|N_1|}\cdot 2\sqrt{|N_1|}= \frac{40}{\sqrt{|N_1|}} \le \frac{100}{\sqrt{n}}.\qedhere
\]
\end{proof}

Combining \cref{claim:prob-far-hyperplane-slice} and \cref{claim:prob-close-hyperplane-slice}, the union bound shows that for $n$ sufficiently large, we have
\begin{align*}
    \P\Big[|\ang{a_i,z} - b_i|\ge 2|a_{ih}| \text{ for all } i\in [k]\Big]&\ge 1-\!\!\!\!\sum_{i\in K_1\setminus K_1^*}\!\frac{2}{n^4} - \sum_{i\in K_1^*}\frac{100}{\sqrt{n}}
    \ge 1-n\cdot \frac{2}{n^4} - \frac{\sqrt{n}}{200}\cdot \frac{100}{\sqrt{n}} =\frac{1}{2}-\frac{2}{n^3} >0. \qedhere
\end{align*}
\end{proof}

\section{Proof of \cref{prop:construct-X}}\label{sec:prop-proof}

In this section, we prove \cref{prop:construct-X}. Recall that $m$ is sufficiently large, $\ell\le 10^{-10}m^{13/19}\log^{-32/19}m$, and we are given an $\ell\times m$ matrix $V\in \R^{\ell \times m}$ satisfying the following conditions:
\begin{enumerate}
    \item For $i\in [\ell]$, the $i$-th row vector $v_i$ of $V$ satisfies $||v_i||_2 = 1$.
    \item For $j\in [m]$, the $j$-th column vector $v_{*j}$ of $V$ satisfies $||v_{*j}||_2\le 10^{-1}m^{-3/19}\log^{-13/38}m$.
\end{enumerate}
Also fix $\lambda\in \R^\ell$. Our goal is to show that there exists $X\in [-1/2, 1/2]^m$ such that the number of indices $i\in [\ell]$ satisfying $|\ang{v_i,X}-\lambda_i| \le 5\sqrt{\log m}$ is at most $\sqrt{m}/200$.

Geometrically, for each $i\in [\ell]$, one can consider the hyperplane $H_i = \{x\in \R^m\mid \ang{v_i,x}-\lambda_i = 0\}$ defined by the vector $v_i\in \R^m$ together with $\lambda_i$. Our goal is now to find a point $X\in [-1/2, 1/2]^m$ of  distance more than $5\sqrt{\log m}$ from all but at most $\sqrt{m}/200$ of the hyperplanes.

We will choose $X$ probabilistically and show that with positive probability it has the desired properties.

\subsection{Construction}

We construct $X = X_0+X_1$ using a two-step process defined in \cref{def:X0,def:X} below. We first sample $X_0$ according to \cref{def:X0} and then for an outcome of $X_0$, we sample $X_1$ as in \cref{def:X}.

\begin{definition}\label{def:X0}
Let $X_0\in \R^n$ be a random point defined as 
\[X_0 = \rho_0\sum_{i = 1}^\ell \alpha_i v_i, \quad \alpha_i\sim \unif\text{ for all $i\in [\ell]$},\]
where $\rho_0:= m^{3/19}\log^{-3/19}m$ and $\alpha_1,\dots,\alpha_\ell$ are independent. 
\end{definition}

In the next definition, for an outcome of $X_0$, we define an index $i\in [\ell]$ to be bad if the corresponding hyperplane $H_i = \{x\in \R^m\mid \ang{v_i,x}-\lambda_i = 0\}$ is too close to $X_0$.
It might be helpful to always think of an index $i\in [\ell]$ to be associated with the hyperplane $H_i$.

\begin{definition}[Bad indices]\label{def:bad-index}
    For an outcome of $X_0$, we say that an index $i\in [\ell]$ is \emph{bad} if 
    \[|\ang{X_0, v_i} - \lambda_i| \le 10\sqrt{\log m}.\]
\end{definition}

Now we can define $X$ as follows.

\begin{definition}\label{def:X}
Given an outcome of $X_0$, let $X_1$ be defined as
\[X_1  =\rho_1 \sum_{i \text{ bad}}\beta_i v_i, \quad \beta_i\sim U([-1,1])\text{ for all bad $i\in [\ell]$},\]
where $\rho_1 := m^{1/19}\log^{-1/19}m$ and the random variables $\beta_i$ are independent for all bad $i\in [\ell]$. Define $X = X_0+X_1$ by first sampling $X_0$ as in \cref{def:X0} and then sampling $X_1$ given the outcome of $X_0$. 
\end{definition}

Note that by \cref{def:X0} and \cref{def:X}, for any $v\in \R^m$, we have  $\E[\ang{X_0,v}] = 0$ as well as $\E[\ang{X_1,v}\mid X_0] = 0$ for any outcome of $X_0$, and consequently $\E[\ang{X,v}] = 0$. The next lemma shows that with high probability we have $X\in [-1/2,1/2]^m$.

\begin{lemma}\label{lem:X-bounded-infty-norm}
    Let $X$ be defined as in \cref{def:X}, then 
    \[\P[||X||_\infty \le 1/2] \ge 1-\frac{1}{m^2}.\]
\end{lemma}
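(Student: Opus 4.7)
\textbf{Proof proposal for \cref{lem:X-bounded-infty-norm}.}

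The plan is to bound $\|X_0\|_\infty$ and $\|X_1\|_\infty$ separately by $1/4$ using the Chernoff--Hoeffding inequality (\cref{lem:chernoff}) coordinatewise, then combine them via the triangle inequality and a union bound. For each $j \in [m]$, observe that the $j$-th coordinate of $X_0$ is $(X_0)_j = \rho_0 \sum_{i=1}^{\ell} \alpha_i v_{ij}$, a sum of independent bounded random variables with $\alpha_i v_{ij} \in [-|v_{ij}|,|v_{ij}|]$ and mean $0$. The variance parameter in Hoeffding's bound is $\sum_i (2|v_{ij}|)^2 = 4\|v_{*j}\|_2^2$, so $\rho_0^2$ times this is at most $4 \cdot 10^{-2} m^{6/19}\log^{-6/19}m \cdot m^{-6/19}\log^{-13/19}m = 4 \cdot 10^{-2}/\log m$. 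Applying \cref{lem:chernoff} to $(X_0)_j/\rho_0$ with $t = 1/(4\rho_0)$ then yields $\P[|(X_0)_j| \ge 1/4] \le 2\exp(-c \log m)$ for a large constant $c$, giving $\P[|(X_0)_j| \ge 1/4] \le m^{-5}$ for $m$ large.

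Next I would carry out the analogous computation for $X_1$, conditional on any outcome of $X_0$. Let $B \subseteq [\ell]$ be the (random) set of bad indices. Given $X_0$ (and hence $B$), the coordinate $(X_1)_j = \rho_1\sum_{i \in B}\beta_i v_{ij}$ is a sum of independent mean-zero bounded variables. The crucial point is that the Hoeffding sum over any subset $B$ of $[\ell]$ satisfies $\sum_{i \in B}(2|v_{ij}|)^2 \le 4\|v_{*j}\|_2^2$, which is the same bound as before. Now $\rho_1^2\|v_{*j}\|_2^2 \le 10^{-2}m^{2/19}\log^{-2/19}m\cdot m^{-6/19}\log^{-13/19}m = 10^{-2}m^{-4/19}\log^{-15/19}m$, so the Hoeffding exponent for threshold $1/4$ is of order $m^{4/19}\log^{15/19}m \gg \log m$. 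Consequently $\P[|(X_1)_j| \ge 1/4 \mid X_0] \le m^{-5}$ uniformly in $X_0$, and this bound thus holds unconditionally as well.

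Finally, I would union bound both estimates over $j \in [m]$ to obtain $\P[\|X_0\|_\infty \ge 1/4] \le m^{-4}$ and $\P[\|X_1\|_\infty \ge 1/4] \le m^{-4}$. Since $\|X\|_\infty \le \|X_0\|_\infty + \|X_1\|_\infty$, the event $\|X\|_\infty > 1/2$ is contained in the union of these two events, so $\P[\|X\|_\infty > 1/2] \le 2m^{-4} \le m^{-2}$ for $m$ sufficiently large, as desired.

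The only step requiring any thought is the $X_1$ part, since the set $B$ of bad indices is itself random and correlated with $X_0$; the key observation that saves us is that the Hoeffding bound holds with the same $4\|v_{*j}\|_2^2$ variance proxy for every realization of $B \subseteq [\ell]$, so the conditional bound is uniform and integrates trivially. All other ingredients are routine applications of the column-norm hypothesis $\|v_{*j}\|_2 \le 10^{-1}m^{-3/19}\log^{-13/38}m$ together with the specific choices of $\rho_0$ and $\rho_1$, which were calibrated precisely so that $\rho_0^2\|v_{*j}\|_2^2 = O(1/\log m)$ and $\rho_1^2\|v_{*j}\|_2^2$ is even smaller by a polynomial factor.
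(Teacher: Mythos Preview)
Your proposal is correct and follows essentially the same route as the paper: apply Chernoff--Hoeffding coordinatewise to $X_0$ and (conditionally) to $X_1$, using $\sum_i (2v_{ij})^2\le 4\|v_{*j}\|_2^2$ in both cases, then union-bound over $j$ and combine; the paper simply splits $1/2=1/3+1/6$ rather than $1/4+1/4$. One small arithmetic slip: with threshold $1/4$ the Hoeffding exponent for $X_0$ is $\tfrac{2(1/4)^2}{4\cdot 10^{-2}/\log m}=3.125\log m$, so the per-coordinate bound is $2m^{-3.125}$ (not $m^{-5}$) and the union bound gives $\P[\|X_0\|_\infty\ge 1/4]\le 2m^{-2.125}$ (not $m^{-4}$), but this is still $\le m^{-2}/2$ for $m$ large, so the final conclusion stands.
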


\begin{proof}
    For $j\in [m]$, let $X(j)$, $X_0(j)$, $X_1(j)$ denote the $j$-th coordinate of $X$, $X_0$, and $X_1$, respectively. Then note that by definition
    \[X_0(j) =  \rho_0\sum_{i = 1}^\ell \alpha_i v_{ij}\]
    with independent $\alpha_i\sim \unif$ for all $i\in [\ell]$. In particular, $\E[X_0(j)] = 0$ and $X_0(j)$ is a sum of $\ell$ independent random variables in the intervals $[-\rho_0v_{ij}, \rho_0 v_{ij}]$ for $i\in [\ell]$. Since $\rho_0= m^{3/19}\log^{-3/19}m$ and $||v_{*j}||_2\le 10^{-1}m^{-3/19}\log^{-13/38}m$, we have 
    \[\sum_{i = 1}^\ell (2\rho_0 v_{ij})^2\le 4\rho_0^2\cdot ||v_{*j}||_2^2\le 4\cdot m^{6/19}\log^{-6/19}\cdot 10^{-2}m^{-6/19}\log^{-13/19}m = 4\cdot 10^{-2}\log^{-1}m.\]
    Thus by \cref{lem:chernoff}, we have 
    \[\P[|X_0(j)| > 1/3]\le 2\exp\Big(-\frac{1}{3^2}\cdot\frac{2\cdot 10^{2}\log m}{4}\Big)\le \frac{2}{m^5}.\]
    Taking a union bound over all $j\in [m]$, we have 
    \[\P[||X_0||_\infty > 1/3]\le \sum_{j=1}^{m} \P[|X_0(j)| > 1/3]\le \frac{2}{m^4}.\]
    Now for any fixed outcome of $X_0$, we have
    \[X_1(j) = \rho_1 \sum_{i \text{ bad}}\beta_i v_{ij},\]
    with independent $\beta_i\sim \unif$ for all bad $i\in [\ell]$. Therefore, $\E[X_1(j) \mid X_0] = 0$, and $X_1(j)$  conditional on the fixed outcome of $X_0$ is a sum of independent random variables in the intervals $[-\rho_1v_{ij}, \rho_1 v_{ij}]$ for all bad $i\in [\ell]$ . Since $\rho_1=m^{1/19}\log^{-1/19}m$, we have
    \[\sum_{i \text{ bad}} (2\rho_1 v_{ij})^2\le 4\rho_1^2||v_{*j}||_2^2\le 4\cdot m^{2/19}\log^{-2/19}m\cdot 10^{-2}m^{-6/19}\log^{-13/19}m = 4\cdot 10^{-2}m^{-4/19}\log^{-15/19}m.\]
    Again by \cref{lem:chernoff}, we have 
    \[\P[|X_1(j)| > 1/6 \mid X_0]\le 2\exp\Big(-\frac{1}{6^2}\cdot \frac{2\cdot 10^2m^{4/19}\log^{15/19}}{4}\Big)\le 2e^{-m^{4/19}\log^{15/19}}\le \frac{1}{m^5}.\]
    Taking a union bound over all $j\in [m]$, for any outcome of $X_0$ we have
    \[\P[||X_1||_\infty > 1/6 \mid X_0]\le \frac{1}{m^4}.\]
    Thus for $m$ sufficiently large,
    \begin{align*}
        \P[||X||_\infty \le 1/2]&\ge \P[||X_0||_\infty\le 1/3]\cdot \inf_{X_0}\P[||X_1||_\infty\le 1/6 \mid X_0]\\
        &\ge \Big(1-\frac{2}{m^4}\Big)\Big(1-\frac{1}{m^4}\Big)\ge 1-\frac{1}{m^2}.\qedhere
    \end{align*}
\end{proof}

\subsection{Analysis}

It remains to bound the probability that there are too many indices $i\in [\ell]$ such that $|\ang{v_i,X}-\lambda_i| \le 5\sqrt{\log m}$ (i.e., such that $X$ is close to the hyperplane corresponding to $H_i$).

\begin{definition}\label{def:xclose}
    For $i\in [\ell]$, we say that $i$ is \emph{close} if 
    \[|\ang{X,v_i} - \lambda_i|\le 5\sqrt{\log m}.\]
\end{definition}

The following proposition bounds the expected number of close indices. 
\begin{prop}\label{prop:expected-size-F}
    The expected number of close indices is at most $\sqrt{m}/400$.
\end{prop}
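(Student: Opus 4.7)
The plan is to decompose $|F|=|F\cap B|+|F\setminus B|$, where $B$ denotes the random set of bad indices from \cref{def:bad-index}, and bound the two expected contributions separately.

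First I would handle $\E[|F\cap B|]$. For each $i\in B$, I condition on $X_0$ (which fixes $B$ and $Y_i:=\ang{X_0,v_i}-\lambda_i$) and on $(\beta_j)_{j\in B\setminus\{i\}}$. Since $\ang{v_i,v_i}=1$, the quantity $\ang{X,v_i}-\lambda_i$ is then an affine function of $\beta_i\sim U([-1,1])$ with slope $\rho_1$, hence distributed uniformly on an interval of length $2\rho_1$; the probability that it lies in $[-5\sqrt{\log m},5\sqrt{\log m}]$ is therefore at most $5\sqrt{\log m}/\rho_1$. Taking expectation and summing, $\E[|F\cap B|]\le\E[|B|]\cdot 5\sqrt{\log m}/\rho_1$. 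To bound $\E[|B|]$, I apply \cref{lem:continuous-littlewood-offord} to $\ang{X_0,v_j}$: its variance equals $\rho_0^2\|Vv_j\|_2^2/3\ge\rho_0^2/3$, giving $\P[j\in B]\le 20\sqrt{3\log m}/\rho_0$ and hence $\E[|B|]\le 20\sqrt{3\log m}\,\ell/\rho_0$. Substituting $\rho_0=m^{3/19}\log^{-3/19}m$, $\rho_1=m^{1/19}\log^{-1/19}m$, and the hypothesis on $\ell$, this yields $\E[|F\cap B|]=O(m^{9/19}\log^{-9/19}m)\le\sqrt{m}/800$ for $m$ sufficiently large.

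Next I would handle $\E[|F\setminus B|]$, which is the harder half. For $i\notin B$ one has $|Y_i|>10\sqrt{\log m}$, so $i\in F$ forces $|Z_i|\ge 5\sqrt{\log m}$, where $Z_i:=\ang{X_1,v_i}=\rho_1\sum_{j\in B}\beta_j\ang{v_j,v_i}$. Conditional on $X_0$ (which fixes $B$), $Z_i$ is a sum of independent bounded random variables, so \cref{lem:chernoff} gives $\P[|Z_i|\ge 5\sqrt{\log m}\mid X_0]\le 2\exp(-25\log m/(2\rho_1^2 S_i))$ where $S_i:=\sum_{j\in B}\ang{v_j,v_i}^2$. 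My intended strategy is to split indices by the magnitude of $S_i$: for those with $S_i$ below a carefully chosen threshold, Chernoff itself gives an exponentially small contribution, while for those with large $S_i$ we must control the count. The natural handle is $\sum_i S_i=\sum_{j\in B}\|Vv_j\|_2^2$, and I plan to combine a Markov-type bound on this sum with the sharper form $\P[j\in B]\le 20\sqrt{3\log m}/(\rho_0\|Vv_j\|_2)$ of continuous Littlewood--Offord (using the exact variance $\rho_0^2\|Vv_j\|_2^2/3$), which exactly cancels the $\|Vv_j\|_2$ factor and produces extra savings for ``spread'' hyperplanes.

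The main obstacle is that a naive Chernoff-plus-Markov calibration does not by itself reach the target $\sqrt{m}/800$: to close the gap, one must crucially exploit the column-norm hypothesis~(2) of \cref{prop:construct-X}, through the identity $\|VV^T\|_F^2=\|V^TV\|_F^2=\sum_{a,b}\ang{v_{*a},v_{*b}}^2$, and very likely also sharpen the Chernoff bound using $|Z_i|\ge|Y_i|/2$ (for $i\notin B$) rather than just $|Z_i|\ge 5\sqrt{\log m}$, so as to incorporate the typical size $|Y_i|\sim\rho_0\|Vv_i\|_2$ into the exponent. Calibrating these two ingredients tightly is what pins down the precise exponent $13/19$ in the hypothesis on $\ell$ and the factor $m^{-3/19}\log^{-13/38}m$ on column norms. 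Once $\E[|F\setminus B|]\le\sqrt{m}/800$ is established, summing the two bounds yields $\E[|F|]\le\sqrt{m}/400$ as claimed.
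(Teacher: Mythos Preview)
Your treatment of $F\cap B$ is fine and matches \cref{lem:prob-bad-close}. The gap is entirely in $F\setminus B$, and the two remedies you propose do not close it.

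First, the column-norm hypothesis is a false lead here: the paper's proof of \cref{prop:expected-size-F} never uses condition~(2) of \cref{prop:construct-X}; that hypothesis is consumed only in \cref{lem:X-bounded-infty-norm}, to confine $X$ to the cube. Concretely, your identity $\|VV^T\|_F^2=\|V^TV\|_F^2$ together with $\|v_{*a}\|_2\le c$ yields at best $\sum_j S_j\le \ell\, mc^2$, and with the given parameters this produces the same order $\sum_j\sqrt{S_j}\lesssim\ell^{3/2}$ as the trivial bound $S_j\le\ell$. Carrying your Chernoff--Markov scheme through gives $\E|F\setminus B|$ of order $\rho_1^2\ell^{3/2}\rho_0^{-1}\approx m^{37/38}$, a factor $m^{9/19}$ away from $\sqrt{m}$. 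The sharpening $|Z_i|\ge|Y_i|/2$ is a step in the right direction (it is essentially what the paper encodes via \cref{def:activated}), but on its own it cannot recover a polynomial loss: once you condition on $X_0$ you have frozen both the bad set $B$ and the value $\ang{X_0,v_j}$ simultaneously, and no amount of sharpening the tail bound on $Z_j$ exploits the fact that these are nearly independent.

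What is actually needed is a \emph{decoupling} estimate at the level of $X_0$. The paper proves (\cref{lem:almost-indep}) that whenever $|\ang{v_i,v_j}|\le 0.9$, the joint event ``$i$ bad and $|\ang{X_0,v_j}-\lambda_j|\le 10t\sqrt{\log m}$'' has probability $O\bigl(t\log m/(\rho_0^2\sqrt{S_j})\bigr)$, essentially the product of the two marginals. This lets one bound $\E[T_j\cdot 1_{\cE_1(j,t)}]$, where $T_j$ is the part of $\rho_1^2\sum_{i\text{ bad}}\ang{v_i,v_j}^2$ coming from indices with $|\ang{v_i,v_j}|\le 0.9$; a Markov step on $T_j$ across dyadic scales $t$ then yields the correct dependence $\rho_1\ell/\rho_0^{3/2}$ rather than $\rho_1^2\ell^{3/2}/\rho_0$. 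Because the decoupling fails for highly aligned pairs, the paper must also carve out ``near-bad'' indices (handled by a degree argument on the correlation graph, \cref{lem:num-light-near-bad,lem:num-near-bad-light-in-F}) and introduce a heavy/light threshold on $S_j$ (\cref{def:heavy}) to glue the pieces together. This structural decomposition, not the column-norm bound, is what pins down the exponent $13/19$.
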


\cref{prop:construct-X} follows directly when combining \cref{lem:X-bounded-infty-norm} and \cref{prop:expected-size-F}.
\begin{proof}[Proof of \cref{prop:construct-X}]
    Let $X$ be defined as in \cref{def:X}. Then by \cref{lem:X-bounded-infty-norm}, we have $X\not\in [-1/2,1/2]^m$ with probability at most $1/m^2$. On the other hand, by \cref{prop:expected-size-F} and Markov's inequality, the probability that there are more than $\sqrt{m}/200$ close indices is at most $1/2$. Hence with probability at least $\frac{1}{2}- \frac{1}{m^2} > 0$ we have that $X\in [-1/2,1/2]^m$ and that the number of close indices is at most $\sqrt{m}/200$. Thus we can conclude that there exists $X\in [-1/2,1/2]^m$ such that the number of indices $i\in [\ell]$ satisfying $|\ang{v_i,X}-\lambda_i|\le 5\sqrt{\log m}$ is at most $\sqrt{m}/200$.
\end{proof}

The rest of the section is devoted to proving \cref{prop:expected-size-F}. For convenience, we also define for each $i\in [\ell]$ the quantity
\[
    S_i = \sum_{j\in [\ell]}\ang{v_i, v_j}^2.
\]
For each $j\in [\ell]$, recall that $\alpha_j\sim \unif$, so $\E[\alpha_j] = 0$ and $\var(\alpha_j) = 1/3$. Thus, for each $i\in [\ell]$ we have 
\begin{equation}\label{eq:var}
    \var[\ang{X_0, v_i}] = \var\Big[\rho_0\sum_{j\in [\ell]}\alpha_j\ang{v_i,v_j}\Big] = \rho_0^2\sum_{j\in [\ell]}\ang{v_i,v_j}^2\cdot \var(\alpha_j) =\frac{\rho_0^2}{3}\sum_{j\in [\ell]}\ang{v_i,v_j}^2 = \frac{1}{3}\rho_0^2S_i.
\end{equation}
Therefore, we can bound the probability of $i$ being bad using \cref{lem:continuous-littlewood-offord} as follows.
\begin{claim}\label{claim:prob-i-bad}
    For $i\in [\ell]$, 
\[\P[i \text{ is bad}] \le \frac{40\sqrt{\log m}}{\rho_0\sqrt{S_i}}.\]
\end{claim}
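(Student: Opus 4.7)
The claim bounds the probability that $\ang{X_0, v_i}$ lies within $10\sqrt{\log m}$ of $\lambda_i$. The plan is to view $\ang{X_0, v_i}$ as a sum of independent uniform random variables and apply the continuous Littlewood--Offord inequality \cref{lem:continuous-littlewood-offord}.

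By the definition of $X_0$ in \cref{def:X0}, I can rewrite
\[\ang{X_0, v_i} = \rho_0 \sum_{j\in [\ell]} \alpha_j \ang{v_i, v_j},\]
which is a sum of $\ell$ independent random variables, with the $j$-th term uniformly distributed on the interval $[-\rho_0|\ang{v_i, v_j}|,\, \rho_0|\ang{v_i, v_j}|]$. The variance of this sum has already been computed in \eqref{eq:var} as $\var[\ang{X_0, v_i}] = \tfrac{1}{3}\rho_0^2 S_i$. Since $\|v_i\|_2 = 1$, we have $S_i \ge \ang{v_i, v_i}^2 = 1 > 0$, so \cref{lem:continuous-littlewood-offord} applies. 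Using it with $b = \lambda_i$ and $t = 10\sqrt{\log m}$ then yields
\[\P\bigl[|\ang{X_0, v_i} - \lambda_i| \le 10\sqrt{\log m}\bigr] \le \frac{2 \cdot 10\sqrt{\log m}}{\sqrt{\rho_0^2 S_i/3}} = \frac{20\sqrt{3}\,\sqrt{\log m}}{\rho_0\sqrt{S_i}} \le \frac{40\sqrt{\log m}}{\rho_0\sqrt{S_i}},\]
which is the desired bound. (\cref{lem:continuous-littlewood-offord} is stated with a strict inequality, but since $\ang{X_0, v_i}$ is a sum of uniforms with positive variance, it has a continuous distribution and the distinction is immaterial.)

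There is no real obstacle here: once the variance identity \eqref{eq:var} is in hand, the claim follows from a single application of the continuous Littlewood--Offord inequality together with the elementary numerical estimate $20\sqrt{3} < 40$.
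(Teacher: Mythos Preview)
Your proof is correct and essentially identical to the paper's: both express $\ang{X_0,v_i}$ as a sum of independent uniforms, invoke the variance computation \eqref{eq:var}, and apply \cref{lem:continuous-littlewood-offord} with $t=10\sqrt{\log m}$ to obtain $20\sqrt{\log m}/\sqrt{\rho_0^2 S_i/3}\le 40\sqrt{\log m}/(\rho_0\sqrt{S_i})$. Your added remarks that $S_i\ge 1$ and that the strict/non-strict inequality is immaterial are fine bookkeeping points the paper leaves implicit.
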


\begin{proof}
    Apply \cref{lem:continuous-littlewood-offord} with $X_j := \rho_0\alpha_j \ang{v_i,v_j}$ for $j\in [\ell]$. Then $X_j$ is uniformly distributed in the interval $[-\rho_0|\ang{v_i,v_j}|,\rho_0|\ang{v_i,v_j}|]$ for each $j\in [\ell]$. Furthermore, $X_1,\dots, X_\ell$ are independent, since $\alpha_1,\dots, \alpha_\ell$ are independent. Now, since $\ang{X_0, v_i} = \rho_0\sum_{j\in [\ell]}\alpha_j \ang{v_i,v_j} = \sum_{j\in [\ell]}X_j$ and $\var[\ang{X_0, v_i}] = \rho_0^2S_i/3$ by \eqref{eq:var},  \cref{lem:continuous-littlewood-offord} implies
    \[\P[i \text{ is bad}] = \P[|\ang{X_0,v_i} - \lambda_i|\le 10\sqrt{\log m}]\le \frac{20\sqrt{\log m}}{\sqrt{\rho_0^2S_i/3}}\le \frac{40\sqrt{\log m}}{\rho_0\sqrt{S_i}}. \qedhere \]
\end{proof}

In addition to bad indices, we also define the notion of \emph{near bad} indices as follows.
\begin{definition}[Near bad indices]
    For an outcome of $X_0$, we say that an index $j\in [\ell]$ is \emph{near bad} if there exists a bad index $i\in [\ell]$ such that $|\ang{v_i, v_j}| > 0.9$. 
\end{definition}
In particular, a bad index $j\in [\ell]$ is also near bad, since $|\ang{v_j, v_j}| = ||v_j||_2^2=1$.

Intuitively, a near bad index $j$ corresponds to a hyperplane $H_j$ with normal vector $v_j$ such that there exists a bad hyperplane $H_i$ with normal vector $v_i$ and the angle between $v_i$ and $v_j$ is small (i.e.\ the two hyperplanes are almost parallel).

We also need the following notion of an index $j\in [\ell]$ being heavy or light depending on $S_j$.
\begin{definition}[Heavy indices]\label{def:heavy}
    We say that an index $j\in [\ell]$ is \emph{heavy} if $S_j \ge \delta^2$ for $\delta = m^{1/38}\log^{9/19}m$; otherwise we say that $j$ is \emph{light}.
\end{definition}

Similar to the calculation in \eqref{eq:var}, we can show that for any fixed outcome of $X_0$ and any $j\in [\ell]$, we have
\[    \var[\ang{X_1, v_j}\mid X_0] = \var\Big[\rho_1\sum_{i\text{ bad}}\beta_i\ang{v_i,v_j}\Big] = \rho_1^2\sum_{i\text{ bad}}\ang{v_i,v_j}^2\cdot \var(\beta_i) =\frac{\rho_1^2}{3}\sum_{i\text{ bad}}\ang{v_i,v_j}^2.\]

To analyze the indices that are not bad given the outcome of $X_0$, but might potentially become close after sampling $X_1$, we define the following.

\begin{definition}[$t$-activated]\label{def:activated}
 Let $t = 2^h$ for some $h\in \{0,1,\dots, \ceil{\log_2 m}\}$. For an outcome of $X_0$, we say that an index $j\in [\ell]$ is \emph{$t$-activated} if the following holds:
\begin{enumerate}[label=(\arabic*)]
    \item $|\ang{X_0, v_j} - \lambda_j| \le 10t\sqrt{\log m}$, \label{item:t-activated-1}
    \item $\frac{1}{3}\rho_1^2\sum_{i \text{ bad}}\ang{v_i, v_j}^2 > t^2/4$. \label{item:t-activated-2}
\end{enumerate}
We say that $j$ is \emph{activated} if there exists $h\in \{0,1,\dots, \ceil{\log_2 m}\}$ such that for $t = 2^h$ the index $j$ is $t$-activated. For an index $j\in [\ell]$, we denote the event in (1) as $\cE_1(j,t)$ and the event in (2) as $\cE_2(j,t)$. 
\end{definition}

We can bound the probability of $\cE_1(j,t)$ using \cref{lem:continuous-littlewood-offord} in the same way as in the proof of \cref{claim:prob-i-bad} above. 

\begin{claim}\label{claim:prob-E1jt}
    For $j\in [\ell]$, $h\in \{0,1,\dots, \ceil{\log_2 m}\}$ and $t = 2^h$, we have
    \[\P[\cE_1(j,t)]\le \frac{40t\sqrt{\log m}}{\rho_0\sqrt{S_j}}.\]
\end{claim}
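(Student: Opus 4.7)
The plan is to mirror the proof of \cref{claim:prob-i-bad} almost verbatim, since $\cE_1(j,t)$ is a direct generalization of the ``bad'' event (which corresponds to $t=1$). The only work is to re-run the continuous Littlewood--Offord computation with the window size $10\sqrt{\log m}$ replaced by $10t\sqrt{\log m}$.

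\medskip

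First I would unfold the definitions. By \cref{def:X0}, one has $\ang{X_0, v_j} = \rho_0 \sum_{i\in[\ell]} \alpha_i \ang{v_i, v_j}$ with the $\alpha_i$ independent and uniform on $[-1,1]$. So $\ang{X_0, v_j}$ is a sum of independent random variables $Y_i := \rho_0 \alpha_i \ang{v_i,v_j}$, where $Y_i$ is uniform on the interval $[-\rho_0|\ang{v_i,v_j}|,\,\rho_0|\ang{v_i,v_j}|]$. From \eqref{eq:var} we already know
\[\var[\ang{X_0, v_j}] \;=\; \frac{\rho_0^2}{3}\,S_j.\]

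\medskip

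Next I would apply \cref{lem:continuous-littlewood-offord} directly to the sum $\sum_{i\in [\ell]} Y_i = \ang{X_0, v_j}$ with the choice $b = \lambda_j$ and window half-width $10 t\sqrt{\log m}$ in place of the $t$ of that lemma. This yields
\[\P\bigl[\cE_1(j,t)\bigr] \;=\; \P\bigl[|\ang{X_0, v_j} - \lambda_j| \le 10 t\sqrt{\log m}\bigr]\;\le\; \frac{2\cdot 10 t\sqrt{\log m}}{\sqrt{\rho_0^2 S_j/3}} \;=\; \frac{20\sqrt{3}\,t\sqrt{\log m}}{\rho_0\sqrt{S_j}}\;\le\; \frac{40 t\sqrt{\log m}}{\rho_0\sqrt{S_j}},\]
which is the desired bound.

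\medskip

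There is no real obstacle here: the claim is essentially a restatement of \cref{claim:prob-i-bad} with the window enlarged by a factor of $t$, and the argument is identical. The only small caveat is to note that \cref{lem:continuous-littlewood-offord} requires $\var[\ang{X_0, v_j}]>0$, which is fine to handle separately (if $S_j = 0$ then $v_j$ is orthogonal to all $v_i$, in particular to itself, contradicting $\|v_j\|_2=1$, so $S_j\ge 1>0$ always). One could also remark that the bound is vacuous and trivially holds once $t\sqrt{\log m}/(\rho_0\sqrt{S_j})$ exceeds a constant, which takes care of any edge cases in the range of $h$.
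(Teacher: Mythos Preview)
Your proof is correct and matches the paper's approach exactly: the paper simply states that the bound follows from \cref{lem:continuous-littlewood-offord} ``in the same way as in the proof of \cref{claim:prob-i-bad},'' which is precisely the computation you carried out with the window enlarged by a factor of $t$. Your remark that $S_j\ge \ang{v_j,v_j}^2=1$ (so the variance is positive) is also consistent with how the paper uses this fact elsewhere.
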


The following lemma states that if an index $j\in [\ell]$ is not activated and not bad, then it is unlikely that $j$ becomes close.

\begin{lemma}\label{lem:not-bad-not-activated}
    For an outcome of $X_0$, if $j\in [\ell]$ is not activated and not bad, then
    \[\P[j\text{ is close} \mid X_0]\le \frac{2}{m^2}.\]
\end{lemma}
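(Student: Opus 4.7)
The plan is to rephrase closeness of $j$ as a tail event for $\langle X_1, v_j\rangle$ conditional on $X_0$, and to control it via Hoeffding. Set $D := |\langle X_0, v_j\rangle - \lambda_j|$; since $j$ is not bad we have $D > 10\sqrt{\log m}$, and if $j$ were close then $|\langle X_1, v_j\rangle| \ge D - 5\sqrt{\log m} \ge D/2$. Conditionally on $X_0$ the set of bad indices is fixed, so $\langle X_1, v_j\rangle = \rho_1 \sum_{i\text{ bad}}\beta_i\langle v_i,v_j\rangle$ is a zero-mean sum of independent variables supported in intervals of width $2\rho_1|\langle v_i,v_j\rangle|$. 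Applying \cref{lem:chernoff} will then yield
\[
    \P[j\text{ close}\mid X_0]\le 2\exp\!\left(-\frac{D^2}{8\sigma^2}\right),\qquad \sigma^2 := \rho_1^2\sum_{i\text{ bad}}\langle v_i,v_j\rangle^2,
\]
and the entire task reduces to verifying $D^2/(8\sigma^2) \ge 2\log m$.

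To exploit the non-activation hypothesis I will choose $t^* = 2^{h^*}$ as the smallest power of $2$ with $10 t^*\sqrt{\log m} \ge D$. Since $D > 10\sqrt{\log m}$, this gives $h^* \ge 1$ and $D \ge 5 t^*\sqrt{\log m}$. Provided $h^* \le \lceil\log_2 m\rceil$, the event $\cE_1(j,t^*)$ holds by construction, so the non-activation of $j$ forces $\cE_2(j,t^*)$ to fail, i.e.\ $\sigma^2 \le 3(t^*)^2/4$. Substituting, $D^2/(8\sigma^2) \ge 25\log m/6 > 4\log m$, and hence $\P[j\text{ close}\mid X_0]\le 2/m^4 \le 2/m^2$, as desired.

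The one case still to dispatch is the boundary $h^* > \lceil\log_2 m\rceil$, in which the non-activation hypothesis gives no constraint on $\sigma$. Here $D > 10\cdot 2^{\lceil\log_2 m\rceil}\sqrt{\log m} \ge 10 m\sqrt{\log m}$, and I will instead use the crude absolute bound $\sigma^2 \le \rho_1^2 S_j \le \rho_1^2 \|V\|_F^2 \le 10^{-2}m^{15/19}\log^{-15/19}m$ that follows from condition (2) on $V$; since this is polynomially smaller than $D^2/\log m$, Hoeffding gives a super-polynomially small bound, certainly $\le 2/m^2$ for $m$ large. The main obstacle I anticipate is precisely this dyadic bookkeeping—matching the scale $t^*$ to the actual deviation $D$ while keeping it in the admissible range $\{2^0,\dots,2^{\lceil\log_2 m\rceil}\}$, and separately handling the large-$D$ regime. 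The conceptual crux is that non-activation at the correctly chosen scale pins $\sigma$ to $t^*$ and hence to $D$, which is exactly the comparison Hoeffding needs.
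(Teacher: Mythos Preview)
Your proof is correct and follows essentially the same approach as the paper: both pick the minimal dyadic scale $t=2^h$ with $|\langle X_0,v_j\rangle-\lambda_j|\le 10t\sqrt{\log m}$, use non-badness to get $h\ge 1$ and hence $D\ge 5t\sqrt{\log m}$, invoke non-activation at that scale to bound $\rho_1^2\sum_{i\text{ bad}}\langle v_i,v_j\rangle^2$, and finish with Hoeffding. The only cosmetic difference is the boundary case $h>\lceil\log_2 m\rceil$: the paper absorbs it into the main argument by noting $\tfrac{1}{3}\rho_1^2\sum_{i\text{ bad}}\langle v_i,v_j\rangle^2\le \tfrac{1}{3}\rho_1^2\ell\le m^2/4<t^2/4$ (so the same bound $\sigma^2\le 3t^2/4$ holds regardless), whereas you treat it separately via the Frobenius-norm bound on $\sigma^2$---both work and yield the same conclusion.
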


\begin{proof} 
Fix an outcome of $X_0$ and suppose $j$ is not activated and not bad. Let $h\ge 0$ be the smallest integer such that for $t = 2^h$, we have $|\ang{X_0, v_j} - \lambda_j| \le 10t\sqrt{\log m}$. Then, since $j$ is not bad, we have $h\ge 1$ and $t\ge 2$. In particular, by the minimality of $h$, we can conclude $|\ang{X_0, v_j} - \lambda_j| > 10\cdot 2^{h-1}\sqrt{\log m}=5t\sqrt{\log m}$.

We claim that $\frac{1}{3}\rho_1^2\sum_{i\text{ bad}}\le t^2/4$. If $h\le \ceil{\log_2 m}$, this follows from the fact that $j$ is not $t$-activated. If $h>\ceil{\log_2 m}$ (and hence $t>m$), this simply follows because $\frac{1}{3}\rho_1^2\sum_{i \text{ bad}}\ang{v_i, v_j}^2 \le \frac{1}{3}\rho_1^2 \ell \le m^2/4<t^2/4$.

If $j$ is close, we must have $|\ang{X_1, v_j}| \ge |\ang{X_0, v_j} - \lambda_j|-|\ang{X, v_j} - \lambda_j|> (5t-5)\sqrt{\log m}\ge 2t\sqrt{\log m}$. On the other hand, we have $4\rho_1^2\sum_{i\text{ bad}}\ang{v_i,v_j}^2 \le 12(t^2/4) = 3t^2$. Thus, by \cref{lem:chernoff}, we have
\[
 \P[j\text{ is close} \mid X_0] 
\le \P\Big[|\ang{X_1, v_j}| > 2t\sqrt{\log m} \,\Big|\, X_0\Big]
\le 2e^{-2(2t\sqrt{\log m})^2/3t^2}\le \frac{2}{m^{2}}.\qedhere
\]

\end{proof}

We also compute the probability that $j$ is close and activated, which will be useful. 

\begin{lemma}\label{lem:prob-activated-in-F}
    For $j\in [\ell]$ we have 
    \[\P[j\text{ is close and activated}]\le \frac{1600\log^2 m}{\rho_0\sqrt{S_j}}.\]
\end{lemma}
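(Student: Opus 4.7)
The plan is to dyadically decompose the event ``$j$ is activated'' and union bound. Since $j$ being activated means $j$ is $t$-activated for some $t = 2^h$ with $h \in \{0, 1, \dots, \ceil{\log_2 m}\}$, we have
\[
\P[j\text{ is close and activated}] \le \sum_{h=0}^{\ceil{\log_2 m}} \P[j\text{ is close and $2^h$-activated}],
\]
a sum of at most $\ceil{\log_2 m} + 1 \le 2\log m$ terms for $m$ sufficiently large. Hence it suffices to bound each individual term by $\frac{800\log m}{\rho_0 \sqrt{S_j}}$.

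To handle a single $t = 2^h$, I would condition on $X_0$. The events $\cE_1(j,t)$ and $\cE_2(j,t)$ defining ``$j$ is $t$-activated'' are both $X_0$-measurable, since they only depend on which indices are bad. For any outcome of $X_0$ satisfying $\cE_2(j,t)$, the random variable $\ang{X_1, v_j} = \rho_1\sum_{i\text{ bad}}\beta_i\ang{v_i, v_j}$ (conditional on $X_0$) is a sum of independent uniforms on intervals $[-\rho_1|\ang{v_i, v_j}|, \rho_1|\ang{v_i, v_j}|]$, whose variance equals $\frac{\rho_1^2}{3}\sum_{i\text{ bad}}\ang{v_i, v_j}^2 > t^2/4$. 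Thus \cref{lem:continuous-littlewood-offord}, applied with shift $b = \lambda_j - \ang{X_0, v_j}$, gives
\[
\P[j\text{ is close} \mid X_0] = \P\bigl[|\ang{X_1, v_j} - b| \le 5\sqrt{\log m} \mid X_0\bigr] \le \frac{10\sqrt{\log m}}{\sqrt{\var[\ang{X_1, v_j}\mid X_0]}} < \frac{20\sqrt{\log m}}{t}.
\]
Integrating this over $X_0 \in \cE_1(j,t) \cap \cE_2(j,t)$ and combining with the bound $\P[\cE_1(j,t)] \le \frac{40t\sqrt{\log m}}{\rho_0\sqrt{S_j}}$ from \cref{claim:prob-E1jt} yields
\[
\P[j\text{ is close and $t$-activated}] \le \frac{20\sqrt{\log m}}{t}\cdot \frac{40t\sqrt{\log m}}{\rho_0\sqrt{S_j}} = \frac{800\log m}{\rho_0\sqrt{S_j}},
\]
as required.

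The heart of the argument — and the reason for the slightly unusual definition of ``$t$-activated'' in \cref{def:activated} — is the cancellation of the factor $t$ between the two bounds: the anticoncentration bound $\P[\cE_1(j,t)] \le 40 t\sqrt{\log m}/(\rho_0\sqrt{S_j})$ is multiplied by the reciprocal factor $1/t$ coming from the variance lower bound on $\ang{X_1, v_j}$ provided by $\cE_2(j,t)$. This makes the per-scale contribution independent of $t$, so summing over the $O(\log m)$ dyadic scales only costs an extra $\log m$ factor and produces the $\log^2 m$ in the final bound. There is no real obstacle here; the mild subtlety is just to use the $X_0$-measurability of $\cE_1(j,t)$ and $\cE_2(j,t)$ so that the conditional Littlewood--Offord bound applies cleanly.
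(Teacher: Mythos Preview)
Your proof is correct and follows essentially the same approach as the paper: union bound over the dyadic scales $t=2^h$, use \cref{claim:prob-E1jt} for $\P[\cE_1(j,t)]$, and apply \cref{lem:continuous-littlewood-offord} conditionally on $X_0$ using the variance lower bound from $\cE_2(j,t)$, so that the factors of $t$ cancel. The only difference is presentational---the paper phrases the conditional step as $\P[j\text{ is close}\mid j\text{ is $t$-activated}]$ rather than integrating over $X_0$---but the argument is the same.
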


\begin{proof}
    For any $h = 0,\dots, \ceil{\log_2 m}$, setting $t=2^h$, by \cref{claim:prob-E1jt} we have
    \begin{align*}
        \P[j\text{ is close and $t$-activated}] &\le \P[j \text{ is $t$-activated}]\cdot \P[j\text{ is close}\mid j \text{ is $t$-activated}]\\
        &\le \P[\cE_1(j,t)]\cdot \P[j\text{ is close}\mid j \text{ is $t$-activated}]\\
        &\le \frac{40t\sqrt{\log m}}{\rho_0\sqrt{S_j}}\cdot \P[j\text{ is close}\mid j \text{ is $t$-activated}].
    \end{align*}
    To bound the second factor $\P[j\text{ is close}\mid j \text{ is $t$-activated}]$, we fix an outcome of $X_0$ so that $j$ is $t$-activated. Let $\lambda_j' = \lambda_j - \ang{X_0, v_j}$, and note that then $\ang{X,v_j} - \lambda_j=\ang{X_0+X_1,v_j} - \lambda_j=\ang{X_1,v_j} - \lambda_j'$. Therefore, by \cref{lem:continuous-littlewood-offord} and the fact that $\var[\ang{X_1,v_j}\mid X_0]=\frac{1}{3}\rho_1^2\sum_{i \text{ bad}}\ang{v_i, v_j}^2 > t^2/4$, we have
    \[\P[j\text{ is close}\mid X_0]\le \P\Big[\ang{X_1,v_j} - \lambda_j'|\le 5\sqrt{\log m} \,\Big|\, X_0\Big]\le \frac{10\sqrt{\log m}}{\sqrt{t^2/4}}=\frac{20\sqrt{\log m}}{t}\]
    for any fixed outcome of $X_0$ such that $j$ is $t$-activated. So we can conclude that
    \[\P[j\text{ is close}\mid j \text{ is $t$-activated}]\le \frac{20\sqrt{\log m}}{t}\]
    and hence
    \[\P[j\text{ is close and $t$-activated}] \le \frac{40t\sqrt{\log m}}{\rho_0\sqrt{S_j}}\cdot\frac{20\sqrt{\log m}}{t} =  \frac{800\log m}{\rho_0\sqrt{S_j}}.\]
    Now, taking a union bound over $h = 0,\dots, \ceil{\log_2 m}$, for $m$ sufficiently large we have 
    \[
        \P[j\text{ is close and activated}]\le \sum_{h = 0}^{\ceil{\log_2 m}} \P[j\text{ is close and $2^h$-activated}]\le \sum_{h = 0}^{\ceil{\log_2 m}} \frac{800\log m}{\rho_0\sqrt{S_j}}
        \le \frac{1600\log^2 m}{\rho_0\sqrt{S_j}}.\qedhere
    \]
\end{proof}

The rest of the section will be devoted to bounding the expected number of close indices $j\in [\ell]$ by considering five different cases, as listed in the following claim.

\begin{claim}\label{claim:close-categories}
    For an outcome of $X_0$, every index $j\in [\ell]$ falls into (at least) one of the following types:
    \begin{enumerate}
    \item $j$ is bad;
    \item $j$ is not bad and not activated;
    \item $j$ is activated and heavy;
    \item $j$ is light and near bad;
    \item $j$ is activated and not near bad.
\end{enumerate}
\end{claim}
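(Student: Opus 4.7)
The claim is essentially a tautology: the five types are built from three binary dichotomies introduced earlier (\emph{bad} vs.\ not bad via \cref{def:bad-index}, \emph{activated} vs.\ not activated via \cref{def:activated}, and \emph{heavy} vs.\ \emph{light} via \cref{def:heavy}) together with the \emph{near bad} vs.\ not near bad distinction. For any fixed outcome of $X_0$, each index $j\in[\ell]$ satisfies each of these dichotomies in exactly one way, so an exhaustive nested case split will cover every $j$.

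Concretely, the plan is to fix an outcome of $X_0$ and an arbitrary $j\in[\ell]$ and branch as follows. If $j$ is bad, then $j$ is of type (1). Otherwise, branch on whether $j$ is activated: if not, then $j$ is not bad and not activated, which is type (2). If $j$ is activated, branch on whether $j$ is heavy: if heavy, then $j$ is of type (3). If $j$ is activated and light, finally branch on whether $j$ is near bad: if yes, then $j$ is of type (4) (note that type (4) does not require activation, so an activated, light, near bad index still lies in type (4)); if not, then $j$ is activated and not near bad, hence of type (5).

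There is no real obstacle here — the content of the claim is purely organizational, and the proof is simply the four-level case split above. The value of the claim lies in how it packages the subsequent computation of $\E[|\{j\in[\ell]\,:\,j\text{ is close}\}|]$ in the proof of \cref{prop:expected-size-F}: each of the five types will be bounded by a different estimate (\cref{claim:prob-i-bad} for type (1), \cref{lem:not-bad-not-activated} for type (2), \cref{lem:prob-activated-in-F} for types (3) and (5), and an argument exploiting lightness together with the near bad structure for type (4)).
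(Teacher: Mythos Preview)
Your proof is correct and follows exactly the same four-level case split as the paper's own argument (bad / not activated / heavy / near bad). The added remark that type~(4) does not require activation is a useful clarification, and the only minor inaccuracy is in your closing commentary (type~(1) is handled via \cref{lem:prob-bad-close} rather than \cref{claim:prob-i-bad} directly), which does not affect the proof itself.
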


\begin{proof}
    Fix an outcome of $X_0$, and let $j\in [\ell]$. If $j$ is bad then it falls into (1), so we may assume $j$ is not bad. Then if $j$ is not activated, it falls into (2), so we may assume $j$ is activated. If $j$ is heavy, then it falls into (3), so we can assume $j$ is light. If $j$ is near bad, then it falls into (4). Thus, finally if $j$ is not near bad, it falls into (5).
\end{proof}

Note that the expected number of close indices $j$ of type (2), i.e.\ where $j$ is not bad and not activated, can be bounded using \cref{lem:not-bad-not-activated}. The other four types will be covered in the following four subsections.

\subsubsection{$j$ bad}

The following lemma can be used to bound the expected number of close indices that are bad.

\begin{lemma}\label{lem:prob-bad-close}
    For $j\in [\ell]$, we have 
    \[\P[j \text{ is bad and close}]\le \frac{800\log m}{\rho_0\rho_1}.\]
\end{lemma}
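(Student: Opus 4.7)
The plan is to decompose $\P[j \text{ is bad and close}]$ as $\E_{X_0}\!\lbr{\one[j \text{ bad}]\cdot \P[j \text{ close}\mid X_0]}$ and bound the two probabilities separately. The outer factor is already controlled by \cref{claim:prob-i-bad}: since $S_j = \sum_{k\in[\ell]} \ang{v_j,v_k}^2$ contains the diagonal term $\ang{v_j,v_j}^2 = \|v_j\|_2^4 = 1$, we have $S_j \ge 1$, hence $\P[j \text{ bad}] \le 40\sqrt{\log m}/\rho_0$.

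For the inner factor, I would fix any outcome of $X_0$ for which $j$ is bad and exploit the fact that $\beta_j$ then appears in $X_1$. Writing $\ang{X,v_j}-\lambda_j = \ang{X_1,v_j} - (\lambda_j - \ang{X_0,v_j})$, the closeness event only depends on the distribution of $\ang{X_1,v_j}$ conditional on $X_0$. This conditional distribution is the sum of independent uniform random variables $\rho_1 \beta_i \ang{v_i,v_j}$ over bad $i$, and the key point is that since $j$ is itself bad, the sum includes the term $\rho_1 \beta_j \ang{v_j,v_j} = \rho_1 \beta_j$ of variance $\rho_1^2/3$. Thus
\[
\var\bigl[\ang{X_1,v_j}\mid X_0\bigr] = \frac{\rho_1^2}{3}\sum_{i \text{ bad}}\ang{v_i,v_j}^2 \;\ge\; \frac{\rho_1^2}{3}.
\]

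Applying the continuous Littlewood--Offord bound \cref{lem:continuous-littlewood-offord} to $\ang{X_1,v_j}$ with $t = 5\sqrt{\log m}$ then yields
\[
\P[j \text{ close}\mid X_0] \le \frac{2\cdot 5\sqrt{\log m}}{\sqrt{\rho_1^2/3}} = \frac{10\sqrt{3}\sqrt{\log m}}{\rho_1} \le \frac{20\sqrt{\log m}}{\rho_1}
\]
uniformly over outcomes of $X_0$ where $j$ is bad. Multiplying the two estimates yields
\[
\P[j \text{ bad and close}] \le \frac{40\sqrt{\log m}}{\rho_0}\cdot \frac{20\sqrt{\log m}}{\rho_1} = \frac{800\log m}{\rho_0\rho_1},
\]
which is the desired bound. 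There is no real obstacle here — the only substantive observation is that when $j$ is bad, the variance of $\ang{X_1,v_j}$ is automatically lower bounded by $\rho_1^2/3$ thanks to the self-contribution, so the continuous anticoncentration estimate can be applied without any case analysis.
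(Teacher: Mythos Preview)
Your proof is correct and essentially identical to the paper's: both bound $\P[j\text{ bad}]$ via \cref{claim:prob-i-bad} using $S_j\ge 1$, bound $\P[j\text{ close}\mid X_0]$ on the event that $j$ is bad by observing that the self-term $\rho_1\beta_j\ang{v_j,v_j}$ forces $\var[\ang{X_1,v_j}\mid X_0]\ge \rho_1^2/3$, and then multiply. The constants and the key observation match exactly.
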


\begin{proof}
    For an outcome of $X_0$, if $j\in [\ell]$ is bad, then we have 
    \[\var[\ang{X_1,v_j}\mid X_0] = \frac{1}{3}\rho_1^2\sum_{i\text{ bad}}\ang{v_i,v_j}^2\ge \frac{1}{3}\rho_1^2\ang{v_j,v_j}^2=\frac{1}{3}\rho_1^2.\]
    Thus, defining $\lambda_j' := \lambda_j - \ang{X_0,v_j}$ and applying \cref{lem:continuous-littlewood-offord}, we have 
    \[\P[j\text{ is close}\mid X_0] \le \P\Big[|\ang{X_1,v_j} - \lambda_j'|\le 5\sqrt{\log m}\,\Big|\, X_0\Big]\le \frac{10\sqrt{\log m}}{\sqrt{\rho_1^2/3}}\le \frac{20\sqrt{\log m}}{\rho_1}\]
    for any outcome of $X_0$ such that $j$ is bad. This implies that
    \[\P[j\text{ is close}\mid j\text{ is bad}] \le  \frac{20\sqrt{\log m}}{\rho_1}.\]
    Now, by \cref{claim:prob-i-bad} and the fact that $S_j\ge 1$, we have
    \[\P[j \text{ is bad and close}] = \P[j\text{ is bad}]\cdot \P[j\text{ is close}\mid j\text{ is bad}]\le \frac{40\sqrt{\log m}}{\rho_0}\cdot \frac{20\sqrt{\log m}}{\rho_1} = \frac{800\log m}{\rho_0\rho_1}.\qedhere\]
\end{proof}

\subsubsection{$j$ activated and heavy}

We bound the expected number of close indices that are heavy and activated. Recall that an index $j\in [\ell]$ is heavy if $S_j \ge \delta^2$.

\begin{lemma}\label{lem:num-heavy-in-F}
    The expected number of close indices that are heavy and activated is at most $1600\ell\log^2 m/(\rho_0\delta)$. 

\end{lemma}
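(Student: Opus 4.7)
The plan is to apply the bound from \cref{lem:prob-activated-in-F} to each heavy index individually, then sum using linearity of expectation. This should give the stated bound almost immediately, so the proof is essentially a one-line calculation.

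First, I would recall from \cref{lem:prob-activated-in-F} that for every $j\in [\ell]$,
\[
\P[j\text{ is close and activated}]\le \frac{1600\log^2 m}{\rho_0\sqrt{S_j}}.
\]
Next, I would use the definition of heavy (\cref{def:heavy}): if $j$ is heavy then $S_j\ge \delta^2$, hence $\sqrt{S_j}\ge \delta$. Substituting this into the bound above yields, for every heavy index $j$,
\[
\P[j\text{ is close and activated}]\le \frac{1600\log^2 m}{\rho_0\delta}.
\]

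Finally, by linearity of expectation, the expected number of indices $j\in [\ell]$ that are simultaneously heavy, close, and activated is at most
\[
\sum_{j\text{ heavy}}\P[j\text{ is close and activated}] \le \ell\cdot \frac{1600\log^2 m}{\rho_0\delta} = \frac{1600\ell\log^2 m}{\rho_0\delta},
\]
as desired. There is no genuine obstacle here; the heavy case exists precisely so that the general bound of \cref{lem:prob-activated-in-F} becomes usable as a pointwise bound, after which a trivial union/linearity step finishes the argument. The real difficulty will be in the subsequent lemmas that handle the cases where $j$ is light (and either near bad, or activated but not near bad), where one can no longer afford to lose a factor of $\ell$ and must instead exploit structural interactions between $v_j$ and the bad indices.
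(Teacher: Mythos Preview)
Your proposal is correct and matches the paper's proof essentially line for line: apply \cref{lem:prob-activated-in-F}, use $S_j\ge \delta^2$ for heavy $j$, and sum over heavy indices. Your closing remarks about why the light cases are harder are also on point.
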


\begin{proof}
    Suppose $j\in [\ell]$ is heavy, then by \cref{lem:prob-activated-in-F} and the fact that $S_j \ge \delta^2$, we have
    \[\P[j\text{ is close and activated}] \le \frac{1600\log^2 m}{\rho_0\delta}.\]
    Summing over all heavy indices $j\in [\ell]$ proves the lemma.
\end{proof}

\subsubsection{$j$ light and near bad}

Recall that $j\in [\ell]$ is light if $S_j< \delta^2$, and that $j$ is near bad if there exists a bad index $i\in [\ell]$ such that $|\ang{v_i,v_j}| > 0.9$. In the following lemma, we bound the expected number of indices that are light and near bad.

\begin{lemma}\label{lem:num-light-near-bad}
    The expected number of indices that are light and near bad is at most $100\delta \ell\sqrt{\log m}/\rho_0$.
\end{lemma}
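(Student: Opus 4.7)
The plan is to use linearity of expectation to write $\E[\#\{j : j \text{ is light and near bad}\}] = \sum_{j \text{ light}} \P[j \text{ is near bad}]$, and then bound the inner probability via a union bound over the ``cluster'' $I_j := \{i \in [\ell] : |\langle v_i, v_j\rangle| > 0.9\}$ combined with \cref{claim:prob-i-bad}. A first observation is that $|I_j|$ is controlled for light $j$: since $S_j < \delta^2$ and each $i \in I_j$ contributes $\langle v_i, v_j\rangle^2 > 0.81$ to $S_j$, we get $|I_j| \le \delta^2/0.81$. However, a naive application of \cref{claim:prob-i-bad} using only the trivial bound $\sqrt{S_i}\ge 1$ yields $\P[j \text{ is near bad}] \le |I_j| \cdot 40\sqrt{\log m}/\rho_0$, which is of order $\delta^2\sqrt{\log m}/\rho_0$; summing over at most $\ell$ light indices is off from the target by a factor of $\delta$.

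Saving this factor of $\delta$ is the main obstacle, and the key observation is that $S_i$ is in fact at least of order $|I_j|$ whenever $i \in I_j$, because the whole cluster $I_j$ contributes to $S_i$. Concretely, for any $i, k \in I_j$, decompose $v_i = \alpha_i v_j + w_i$ and $v_k = \alpha_k v_j + w_k$ with $w_i, w_k \perp v_j$, $|\alpha_i|, |\alpha_k| > 0.9$ and $||w_i||_2, ||w_k||_2 < \sqrt{0.19}$. Then
\[ |\langle v_i, v_k\rangle| \ge |\alpha_i \alpha_k| - ||w_i||_2 \cdot ||w_k||_2 > 0.81 - 0.19 = 0.62, \]
so $\langle v_i, v_k\rangle^2 > 0.38$. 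Summing over $k \in I_j$ yields $S_i \ge 0.38 \cdot |I_j|$ for every $i \in I_j$.

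Plugging this improved lower bound into \cref{claim:prob-i-bad} gives $\P[i \text{ is bad}] \le 40\sqrt{\log m}/(\rho_0\sqrt{0.38 \cdot |I_j|})$ for each $i \in I_j$, and a union bound over $I_j$ yields
\[ \P[j \text{ is near bad}] \le |I_j| \cdot \frac{40\sqrt{\log m}}{\rho_0 \sqrt{0.38 \cdot |I_j|}} \le \frac{40}{\sqrt{0.38\cdot 0.81}} \cdot \frac{\delta\sqrt{\log m}}{\rho_0} \le \frac{100\delta\sqrt{\log m}}{\rho_0}, \]
where the second inequality uses $\sqrt{|I_j|} \le \delta/\sqrt{0.81}$. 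Summing over the at most $\ell$ light indices $j$ yields the claimed bound $100\delta\ell\sqrt{\log m}/\rho_0$, with the absolute constants working out comfortably since $40/\sqrt{0.38\cdot 0.81} < 100$.
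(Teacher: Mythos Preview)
Your proof is correct. Both your argument and the paper's rest on the same geometric observation---that any two vectors in the ``cluster'' $I_j$ have inner product bounded away from zero (you get $>0.62$; the paper, via the triangle inequality on $\ell_2$-distances, gets $\ge 0.6$)---but the bookkeeping is organized differently. You sum over light indices $j$ and, for each such $j$, lower-bound $S_i$ for every $i\in I_j$ by $0.38\,|I_j|$ so that the union bound over $I_j$ yields $\P[j\text{ near bad}]\lesssim \sqrt{|I_j|}\,\sqrt{\log m}/\rho_0\le \delta\sqrt{\log m}/\rho_0$. The paper instead swaps the order of summation: it shows that every light near-bad $j$ has a bad neighbor $i$ with $\deg(i)\le 3\delta^2$, so the count of light near-bad indices is at most $\sum_{i\text{ bad},\ \deg(i)\le 3\delta^2}\deg(i)$, and then bounds the expectation using $S_i\ge \deg(i)/2$. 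Your route is arguably more direct, since it avoids the double-counting switch; the paper's route makes the ``which $i$ is responsible for $j$'' structure more explicit. Either way the constants land comfortably below $100$.
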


\begin{proof}
    For convenience, we consider the graph $G$ defined as follows: the vertex set is $[\ell]$ and two vertices $i,j\in [\ell]$ are connected by an edge if $|\ang{v_i, v_j}| > 0.9$. For $i\in [\ell]$, let $\deg(i)$ denote the degree of vertex $i$, and let $N(i)$ denote the neighborhood of vertex $i$.
    
    Note that if $|\ang{v_i, v_j}| > 0.9$ for some $i,j\in [\ell]$, then we have either $||v_i-v_j||_2^2\le 0.2$ or $||v_i+v_j||_2^2\le 0.2$. Indeed, since $||v_i||_2 = 1$ and $||v_j||_2 = 1$, we have
    \[||v_i-v_j||_2^2 = ||v_i||_2^2 + ||v_j||_2^2 - 2\ang{v_i,v_j} = 2-2\ang{v_i,v_j}.\]
    Similarly, we have $||v_i+v_j||_2^2 = 2+2\ang{v_i,v_j}$. So, depending on the sign of $\ang{v_i,v_j}$, we have either $||v_i-v_j||_2^2\le 0.2$ or $||v_i+v_j||_2^2\le 0.2$.

    \begin{claim}\label{claim:light-bad-deg-bound}
        For every light index $j\in [\ell]$ that is near bad, there exists a bad neighbor $i\in N(j)$ such that $\deg(i)\le 3\delta^2$.
    \end{claim}

    \begin{proof}
        Let $j\in [\ell]$ be a light index that is near bad. Then by definition there exists $i\in N(j)$ that is bad. For any $h\in N(i)$, since $|\ang{v_i, v_h}| > 0.9$, we have either $||v_i- v_h||_2^2\le 0.2$ or $||v_i+v_h||_2^2\le 0.2$. We claim that $|\ang{v_j, v_h}|\ge 0.6$ for all $h\in  N(i)$. Indeed, assuming without loss of generality that $||v_i- v_j||_2^2\le 0.2$ and $||v_i- v_h||_2^2\le 0.2$,  by the triangle inequality we have
        \[||v_j-v_h||_2\le ||v_j- v_i||_2 + ||v_i- v_h||_2 \le \sqrt{0.2} + \sqrt{0.2} = 2\sqrt{1/5}.\]
        So we have $||v_j-v_h||_2^2\le 0.8$ and therefore $|\ang{v_j, v_h}|\ge (2-0.8)/2 = 0.6$. Now we obtain
        \[0.6^2\cdot |N(i)| \le\sum_{h\in N(i)} \ang{v_j, v_h}^2 \le \sum_{h = 1}^k \ang{v_j, v_h}^2 = S_j\le \delta^2,\]
        recalling the assumption that $j$ is light. Thus we have $\deg(i)=|N(i)|\le 3\delta^2$, as claimed.
    \end{proof}
    
By \cref{claim:light-bad-deg-bound}, the number of indices $j\in [\ell]$ that are light and near bad is at most 
\[\sum_{\substack{i\in [\ell]\text{ bad}\\\deg(i)\le 3\delta^2}}\deg(i).\]
Therefore, the expected number of indices that are light and near bad is at most
\[\sum_{\substack{i\in [\ell]\\ \deg(i)\le 3\delta^2}} \deg(i)\cdot \P[i\text{ is bad}] \le \sum_{\substack{i\in [\ell]\\ \deg(i)\le 3\delta^2}} \deg(i)\cdot \frac{40\sqrt{\log m}}{\rho_0 \sqrt{S_i}},\]
using \cref{claim:prob-i-bad}. Now, note that for every $i\in [\ell]$, we have $S_i=\sum_{j\in [\ell]} \ang{v_i, v_j}^2\ge \sum_{j\in N(i)} \ang{v_i, v_j}^2\ge \deg(i)\cdot 0.9^2\ge \deg(i)/2$. Therefore we can conclude that the expected number of indices that are light and near bad is at most
\[\sum_{\substack{i\in [\ell]\\ \deg(i)\le 3\delta^2}}\deg(i)\cdot \frac{40\sqrt{\log m}}{\rho_0 \sqrt{\deg(i)/2}}
= \frac{40\sqrt{\log m}}{\rho_0}\cdot \sum_{\substack{i\in [\ell]\\ \deg(i)\le 3\delta^2}}\sqrt{2\deg(i)}\le \frac{40\sqrt{\log m}}{\rho_0 }\cdot \ell\cdot \sqrt{6}\delta \le \frac{100\delta \ell\sqrt{\log m}}{\rho_0}. \qedhere\]
\end{proof}

Thus we have the following upper bound for the expected number of close indices that are light and near bad.

\begin{lemma}\label{lem:num-near-bad-light-in-F}
    The expected number of close indices that are light and near bad is at most $2000\delta \ell\log m/(\rho_0\rho_1)$.
\end{lemma}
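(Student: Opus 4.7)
My plan is to deduce \cref{lem:num-near-bad-light-in-F} from \cref{lem:num-light-near-bad} by showing that, conditional on $X_0$, any near bad index $j$ has a very small chance of also being close. Concretely, whether or not $j$ is close depends (once $X_0$ is fixed) only on the random variables $\beta_i$ used in constructing $X_1$, and being near bad automatically forces the conditional variance $\var[\ang{X_1,v_j}\mid X_0]$ to be large; this is the analogue for near bad indices of the variance bound already used inside the proof of \cref{lem:prob-bad-close}.

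First, I fix an outcome of $X_0$ and suppose $j$ is near bad. By definition there exists a bad index $i\in[\ell]$ with $|\ang{v_i,v_j}|>0.9$. Then
\[
\var[\ang{X_1,v_j}\mid X_0] \;=\; \frac{\rho_1^2}{3}\sum_{i'\text{ bad}}\ang{v_{i'},v_j}^2 \;\ge\; \frac{\rho_1^2}{3}\ang{v_i,v_j}^2 \;\ge\; \frac{0.81\,\rho_1^2}{3}\;\ge\;\frac{\rho_1^2}{4}.
\]
Setting $\lambda_j':=\lambda_j-\ang{X_0,v_j}$ so that $\ang{X,v_j}-\lambda_j=\ang{X_1,v_j}-\lambda_j'$, and applying \cref{lem:continuous-littlewood-offord} (exactly as in the proofs of \cref{lem:prob-bad-close,lem:prob-activated-in-F}), I get
\[
\P[j\text{ is close}\mid X_0] \;\le\; \P\bigl[|\ang{X_1,v_j}-\lambda_j'|\le 5\sqrt{\log m}\,\big|\,X_0\bigr]\;\le\;\frac{10\sqrt{\log m}}{\sqrt{\rho_1^2/4}}\;=\;\frac{20\sqrt{\log m}}{\rho_1}.
\]
Crucially, this bound holds for every outcome of $X_0$ for which $j$ is near bad, so it bounds the conditional probability given $X_0$ and given that $j$ is near bad and light (the latter two events are $X_0$-measurable).

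Finally, I condition on $X_0$ and sum. The expected number of close, light, and near bad indices is
\[
\E\Big[\sum_{j \text{ light, near bad}} \P[j\text{ is close}\mid X_0]\Big]\;\le\;\frac{20\sqrt{\log m}}{\rho_1}\cdot\E\bigl[\#\{j\in[\ell]:j\text{ is light and near bad}\}\bigr].
\]
Plugging in the bound $100\delta\ell\sqrt{\log m}/\rho_0$ from \cref{lem:num-light-near-bad} gives at most $2000\delta\ell\log m/(\rho_0\rho_1)$, as required. The proof is short and the only mild subtlety is checking that the variance inequality used inside \cref{lem:continuous-littlewood-offord} is valid for \emph{every} fixed $X_0$ that makes $j$ near bad, so that we may freely exchange expectation and the bound; this is immediate because ``near bad'' and ``light'' are deterministic functions of $X_0$, whereas ``close'' also depends on $X_1$.
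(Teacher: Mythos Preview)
Your proof is correct and follows essentially the same approach as the paper: bound $\var[\ang{X_1,v_j}\mid X_0]\ge 0.9^2\rho_1^2/3$ for near bad $j$, apply \cref{lem:continuous-littlewood-offord} to get $\P[j\text{ close}\mid X_0]\le 20\sqrt{\log m}/\rho_1$, and then multiply by the expected count from \cref{lem:num-light-near-bad}. The only cosmetic differences are your intermediate rounding to $\rho_1^2/4$ and your phrasing of the final summation via $\E\bigl[\sum_{j\text{ light, near bad}}\P[j\text{ close}\mid X_0]\bigr]$ rather than $\sum_{j\text{ light}}\P[j\text{ close}\mid j\text{ near bad}]\cdot\P[j\text{ near bad}]$.
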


\begin{proof}
Consider any light index $j\in [\ell]$. For any outcome of $X_0$ such that $j$ is near bad, there exists at least one bad index $i\in [\ell]$ satisfying $|\ang{v_i, v_j}| > 0.9$, and consequently
\[\var[\ang{X_1, v_j}\mid X_0] = \frac{1}{3}\cdot\rho_1^2\sum_{i\text{ bad}}\ang{v_i, v_j}^2\ge \frac{1}{3}\cdot 0.9^2\rho_1^2.\]
Thus, by \cref{lem:continuous-littlewood-offord} we have
 \[\P[j\text{ is close} \mid X_0]\le \frac{10\sqrt{\log m}}{0.9\rho_1/\sqrt{3}}\le \frac{20\sqrt{\log m}}{\rho_1}\]
 for every outcome of $X_0$ such that $j$ is near bad. Therefore, we can conclude that
 \[\P[j\text{ is close and near bad}]=\P[j\text{ is close}\mid j\text{ is near bad}]\cdot \P[j\text{ is near bad}]\le \frac{20\sqrt{\log m}}{\rho_1}\cdot \P[j\text{ is near bad}]\]
 for any light index $j\in [\ell]$. Thus, the expected number of close indices that are light and near bad is
 \[\sum_{\substack{j\in [\ell]\\j\text{ light}}}\P[j\text{ is close and near bad}]\le \frac{20\sqrt{\log m}}{\rho_1}\cdot \sum_{\substack{j\in [\ell]\\j\text{ light}}}\P[j\text{ is near bad}].\]
 Note that the sum on the right-hand side is precisely the expected number of indices that are light and near bad. Plugging in \cref{lem:num-light-near-bad} to bound this, we can conclude that the expected number of close indices that are light and near bad is at most
     \[\frac{20\sqrt{\log m}}{\rho_1}\cdot \frac{100\delta \ell\sqrt{\log m}}{\rho_0} = \frac{2000\delta \ell\log m}{\rho_0\rho_1}. \qedhere\]
\end{proof}

\subsubsection{$j$ activated and not near bad}

Finally, we bound the expected number of close indices that are activated and not near bad. Recall from \cref{def:activated} that for $j\in [\ell]$ and $t\in \{1,2,4,\dots,2^{\ceil{\log_2 m}}\}$, $\cE_1(j,t)$ denotes the event that $|\ang{X,v_j} - \lambda_j|\le 10t\sqrt{\log m}$. We first prove the following key lemma bounding the probability that $i$ is bad and $\cE_1(j,t)$ holds for distinct $i,j\in [\ell]$ satisfying $|\ang{v_i,v_j}|\le 0.9$.

\begin{lemma}\label{lem:almost-indep}
    For distinct $i,j\in [\ell]$ with $|\ang{v_i, v_j}|\le 0.9$ and $t\in \{1,2,4,\dots,2^{\ceil{\log_2 m}}\}$, we have 
    \[\P[i\text{ bad}\text{ and }\cE_1(j,t)]\le  \frac{40000t}{\rho_0^2\sqrt{S_j}}\log m.\]
\end{lemma}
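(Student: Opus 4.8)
Here is how I would prove \cref{lem:almost-indep}.

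\smallskip
\emph{Setup.} The goal is to bound $\P[i\text{ bad and }\cE_1(j,t)]$. As a first step I would discard the precise definitions and note that, writing $R_1:=10\sqrt{\log m}$ and $R_2:=10t\sqrt{\log m}\ge R_1$, this probability is at most $\P[\,|\ang{X_0,v_i}-\lambda_i|\le R_1\text{ and }|\ang{X_0,v_j}-\lambda_j|\le R_2\,]$. Both $\ang{X_0,v_i}=\rho_0\sum_{k}\alpha_k\ang{v_i,v_k}$ and $\ang{X_0,v_j}$ are sums of independent uniform random variables, with variances $\tfrac13\rho_0^2 S_i$ and $\tfrac13\rho_0^2 S_j$ by \eqref{eq:var}. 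If the two events were independent, the marginal bounds from \cref{claim:prob-i-bad} and \cref{claim:prob-E1jt}, namely $\P[i\text{ bad}]\le 40\sqrt{\log m}/(\rho_0\sqrt{S_i})$ and $\P[\cE_1(j,t)]\le 40t\sqrt{\log m}/(\rho_0\sqrt{S_j})$, would already multiply to at most $1600t\log m/(\rho_0^2\sqrt{S_j})$, comfortably below the target; so the real content is to show the two events are \emph{approximately} independent, which is where the hypothesis $|c|\le 0.9$ (with $c:=\ang{v_i,v_j}$) must enter.

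\smallskip
\emph{Decoupling.} Since $v_j-c\,v_i\perp v_i$, in $Z:=\ang{X_0,\,v_j-c\,v_i}=\rho_0\sum_k\alpha_k\ang{v_j-c\,v_i,v_k}$ the coefficient of $\alpha_i$ vanishes, so $Z$ depends only on $(\alpha_k)_{k\ne i}$. Given $(\alpha_k)_{k\ne i}$, the event $\{i\text{ bad}\}$ is the event that $\alpha_i$ lies in a fixed interval of length $2R_1/\rho_0$, hence has conditional probability $\le R_1/\rho_0$; and whenever $i$ is bad and $\cE_1(j,t)$ holds, $|Z-(\lambda_j-c\lambda_i)|\le|\ang{X_0,v_j}-\lambda_j|+|c|\,|\ang{X_0,v_i}-\lambda_i|\le R_1+R_2$. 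Averaging over $(\alpha_k)_{k\ne i}$ gives
\[
\P[i\text{ bad and }\cE_1(j,t)]\ \le\ \frac{R_1}{\rho_0}\,\P\big[\,|Z-(\lambda_j-c\lambda_i)|\le R_1+R_2\,\big].
\]
Running the same argument with the roles of $i$ and $j$ exchanged — using $v_i-c\,v_j\perp v_j$, conditioning on $(\alpha_k)_{k\ne j}$, and noting that then $\{i\text{ bad}\}$ restricts $\alpha_j$ to an interval of length $2R_1/(\rho_0|c|)$ (this requires $c\ne 0$) — gives the companion bound
\[
\P[i\text{ bad and }\cE_1(j,t)]\ \le\ \frac{R_1}{\rho_0|c|}\,\P\big[\,|Z'-(\lambda_i-c\lambda_j)|\le R_1+R_2\,\big],\qquad Z':=\ang{X_0,\,v_i-c\,v_j}.
\]
Here $Z$ and $Z'$ are again sums of independent uniforms, with $\var[Z]=\tfrac13\rho_0^2\sigma^2$ and $\var[Z']=\tfrac13\rho_0^2\sigma'^2$, where $\sigma^2:=\sum_k\ang{v_j-c\,v_i,v_k}^2$ and $\sigma'^2:=\sum_k\ang{v_i-c\,v_j,v_k}^2$; the $k=j$ (resp.\ $k=i$) term alone shows $\sigma^2,\sigma'^2\ge(1-c^2)^2\ge 0.03$, so \cref{lem:continuous-littlewood-offord} applies to both.

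\smallskip
\emph{Two cases.} I would split according to whether $\sigma\ge\sqrt{S_j}/50$. If so, \cref{lem:continuous-littlewood-offord} gives $\P[|Z-(\lambda_j-c\lambda_i)|\le R_1+R_2]\le 2\sqrt3(R_1+R_2)/(\rho_0\sigma)\le 100\sqrt3(R_1+R_2)/(\rho_0\sqrt{S_j})$; feeding this into the first displayed bound and using $R_1+R_2\le 2R_2$ and $R_1R_2=100t\log m$ yields a bound below $40000t\log m/(\rho_0^2\sqrt{S_j})$. If instead $\sigma<\sqrt{S_j}/50$, I would first extract structure: writing the coefficient vectors $\psi_i:=(\ang{v_i,v_k})_k$, $\psi_j:=(\ang{v_j,v_k})_k$ (so $\|\psi_i\|_2=\sqrt{S_i}$, $\|\psi_j\|_2=\sqrt{S_j}$, $\sigma=\|\psi_j-c\,\psi_i\|_2$, $\sigma'=\|\psi_i-c\,\psi_j\|_2$), the reverse triangle inequality gives $|c|\sqrt{S_i}=\|c\,\psi_i\|_2\ge\|\psi_j\|_2-\sigma>0.98\sqrt{S_j}$ (in particular $c\ne 0$), and then from $\psi_i-c\,\psi_j=(1-c^2)\psi_i-c(\psi_j-c\,\psi_i)$,
\[
|c|\,\sigma'\ \ge\ (1-c^2)\,|c|\sqrt{S_i}-c^2\sigma\ \ge\ (1-c^2)(\sqrt{S_j}-\sigma)-c^2\sigma\ =\ (1-c^2)\sqrt{S_j}-\sigma\ \ge\ 0.17\sqrt{S_j}.
\]
Applying \cref{lem:continuous-littlewood-offord} to $Z'$ and plugging into the companion bound gives $\P[i\text{ bad and }\cE_1(j,t)]\le 2\sqrt3\,R_1(R_1+R_2)/(\rho_0^2|c|\sigma')\le 2\sqrt3\,R_1(R_1+R_2)/(0.17\,\rho_0^2\sqrt{S_j})$, which (again using $R_1+R_2\le 2R_2$, $R_1R_2=100t\log m$) is also below $40000t\log m/(\rho_0^2\sqrt{S_j})$.

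\smallskip
\emph{Main obstacle.} The delicate point is the asymmetry of the two decoupling bounds: conditioning $\alpha_i$ away is free for the bad event (cost $R_1/\rho_0$), whereas conditioning $\alpha_j$ away costs $R_1/(\rho_0|c|)$, which explodes when $|c|$ is small — and that is exactly the regime in which one wants the second bound. What rescues the argument is the observation that in the regime where the first bound is weak (i.e.\ $\sigma=\|\psi_j-c\,\psi_i\|_2$ small), $|c|$ is automatically bounded below by roughly $\sqrt{S_j/S_i}$ while $\sigma'$ is large, so the product $|c|\sigma'$ appearing in the denominator of the second bound stays $\gtrsim\sqrt{S_j}$. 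A secondary annoyance is that the target constant $40000$ leaves little slack, so the threshold in the case split (here $\sqrt{S_j}/50$) has to be tuned so that both cases close.
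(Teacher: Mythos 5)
Your proof is correct, but it takes a genuinely different route from the paper's. The paper conditions on all $\alpha_h$ with $h\notin\{i,j\}$, eliminates the two-variable system in $(\alpha_i,\alpha_j)$ via the combination $(\alpha_j+A\alpha_i)-A(\alpha_i+A\alpha_j)=(1-A^2)\alpha_j$ (this is where $|A|\le 0.9$ enters, through $1-A^2\ge 1/10$) to get a conditional bound of $2000t\log m/\rho_0^2$, and then recovers the $1/\sqrt{S_j}$ factor separately: trivially when $S_j\le 400$, and otherwise by applying \cref{lem:continuous-littlewood-offord} to the sum $\rho_0\sum_{h\ne i,j}\alpha_h\ang{v_h,v_j}$ (variance $\tfrac13\rho_0^2(S_j-1-A^2)$), with a case split according to whether $\rho_0\le 10t\sqrt{\log m}$. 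You instead decouple by passing to the orthogonalized directions $v_j-c\,v_i$ and $v_i-c\,v_j$: the forms $Z,Z'$ are measurable in $(\alpha_k)_{k\ne i}$ resp.\ $(\alpha_k)_{k\ne j}$, the bad event costs $R_1/\rho_0$ resp.\ $R_1/(\rho_0|c|)$ conditionally, and anticoncentration of $Z$ or $Z'$ at full variance does the rest, with your case split on $\sigma=\|\psi_j-c\,\psi_i\|_2$ versus $\sqrt{S_j}/50$ and the geometric observation that when $\sigma$ is small one automatically has $|c|\sigma'\ge(1-c^2)\sqrt{S_j}-\sigma\ge 0.17\sqrt{S_j}$ (so in particular $c\ne 0$ exactly when the second bound is needed). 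Both arguments close with the stated constant — your tightest case gives roughly $20000\sqrt{3}\,t\log m/(\rho_0^2\sqrt{S_j})$ in the regime $\sigma\ge\sqrt{S_j}/50$, and about $4100\,t\log m/(\rho_0^2\sqrt{S_j})$ in the other — and your route avoids the paper's special case $S_j\le 400$ and its split on the size of $\rho_0$ relative to $10t\sqrt{\log m}$, at the price of the Gram-vector computation with $\psi_i,\psi_j$; the paper's version is more mechanical but needs the extra bookkeeping of the interval length $20t\sqrt{\log m}+4\rho_0$. The only cosmetic caveats in your write-up are standard ones the paper shares: when applying \cref{lem:continuous-littlewood-offord} you should discard the summands with zero coefficient (the lemma requires nondegenerate intervals), and the closed-versus-open inequality in the event is immaterial since $Z$, $Z'$ have densities.
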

    
\begin{proof}
We first show that for any given outcomes of $\alpha_h$ for all $h\in [\ell]\setminus\{i,j\}$, we have
\begin{equation}\label{eq:conditional-on-alpha-h}
\P[i\text{ bad}\text{ and }\cE_1(j,t) \mid (\alpha_h)_{h\in [\ell]\setminus\{i,j\}}]\le  \frac{2000t}{\rho_0^2}\log m.
\end{equation}

To show \eqref{eq:conditional-on-alpha-h}, let us fix arbitrary outcomes of $\alpha_h$ for all $h\in [\ell]\setminus\{i,j\}$. Let $A = \ang{v_i, v_j}$. By assumption we have $|A|\le 0.9$. Note that 
\[\ang{X_0, v_i} = \rho_0\Big(\alpha_i + \alpha_j\cdot A + \sum_{h \in [\ell]\setminus\{i,j\}}\alpha_h\ang{v_h, v_i}\Big)\]
and 
\[\ang{X_0, v_j} = \rho_0\Big(\alpha_j + \alpha_i\cdot A + \sum_{h \in [\ell]\setminus\{i,j\}}\alpha_h\ang{v_h, v_j}\Big),\]
and that the values of $\sum_{h \in [\ell]\setminus\{i,j\}}\alpha_h\ang{v_h, v_i}$ and $\sum_{h \in [\ell]\setminus\{i,j\}}\alpha_h\ang{v_h, v_j}$ are already determined by the fixed outcomes of $\alpha_h$ for $h\in [\ell]\setminus\{i,j\}$.

In the event $\cE_1(j,t)$, we have
\[|\ang{X_0,v_j} - \lambda_j|\le 10t\sqrt{\log m}.\]
So, the random variable $\alpha_j + \alpha_i\cdot A$ must be contained in the interval $I_j$ of length $20t\sqrt{\log m}/\rho_0$ centered around $\lambda_j/\rho_0 - \sum_{h \in [\ell]\setminus\{i,j\}}\alpha_h\ang{v_h, v_j}$. On the other hand, if $i$ is bad, we have 
\[|\ang{X_0, v_i} - \lambda_i| \le 10\sqrt{\log m},\]
so the random variable $\alpha_i + \alpha_j\cdot A$ must be contained in the interval $I_i$ of length $20\sqrt{\log m}/\rho_0$ centered around $\lambda_i/\rho_0 - \sum_{h \in [\ell]\setminus\{i,j\}}\alpha_h\ang{v_h, v_i}$. 

Thus, if $\cE_1(j,t)$ holds and $i$ is bad, taking the linear combination $(\alpha_j + A\cdot \alpha_i) - A(\alpha_i + A\cdot \alpha_j) = (1- A^2 )\alpha_j$, we see that $(1- A^2 )\alpha_j$ must be contained in the interval $I_j-AI_i$ of length at most $40t\sqrt{\log m}/\rho_0$ and so $\alpha_j$ must be contained in a particular interval of length at most $40t\sqrt{\log m}/((1-A^2)\rho_0)\le 400t\sqrt{\log m}/\rho_0$ (namely, the interval $(1- A^2 )^{-1}(I_j-AI_i)$). Here we used the inequality $1-A^2\ge 1/10$, which follows from $|A|\le 0.9$.

The probability that $\alpha_j\sim U[-1,1]$ lies in this interval is at most $200t\sqrt{\log m}/\rho_0$. Now conditioning on any such outcome of $\alpha_j$, for the event of $i$ being bad to occur, $\alpha_i\sim U[-1,1]$ must fall into an interval of length $20\sqrt{\log m}/\rho_0$ (which happens with probability at most $10\sqrt{\log m}/\rho_0$). Thus we can conclude that 
\[\P[i\text{ bad}\text{ and }\cE_1(j,t)\mid (\alpha_h)_{h\in [\ell]\setminus\{i,j\}}]\le  \frac{200t}{\rho_0}\sqrt{\log m}\cdot \frac{10}{\rho_0}\sqrt{\log m}=\frac{2000t}{\rho_0^2}\log m,\]
showing \eqref{eq:conditional-on-alpha-h}.

In the case where $S_j\le 400$, the desired inequality in the statement of the lemma follows directly from \eqref{eq:conditional-on-alpha-h}. Indeed, in this case we automatically have
\[\P[i\text{ bad}\text{ and }\cE_1(j,t)]\le  \frac{2000t}{\rho_0^2}\log m\le \frac{40000t}{\rho_0^2\sqrt{S_j}}\log m.\]

So let us assume $S_j > 400$ from  now on. We distinguish between the cases $\rho_0\le 10t\sqrt{\log m}$ and $\rho_0 > 10t\sqrt{\log m}$. In either case, in the event $\cE_1(j,t)$, we must have 
\[|\ang{X_0, v_j} - \lambda_j| = \Big|\rho_0\Big(\alpha_i+\alpha_j\cdot A + \sum_{h\in [\ell]\setminus\{i,j\}}\alpha_h\ang{v_h,v_i} \Big)- \lambda_j\Big| \le 10t\sqrt{\log m}.\]
So the sum $\rho_0\sum_{h\in [\ell]\setminus\{i,j\}}\alpha_h\ang{v_h,v_i}$ must lie in the interval of length $20t\sqrt{\log m}+4\rho_0$ centered at $\lambda_j$.

If $\rho_0\le 10t\sqrt{\log m}$, then this interval has length $20t\sqrt{\log m}+4\rho_0 \le 60t\sqrt{\log m}$. By \cref{lem:continuous-littlewood-offord}, the probability that $\rho_0\sum_{h\in [\ell]\setminus\{i,j\}}\alpha_h\ang{v_h,v_i}$ falls into this interval is at most 
\[\frac{60t\sqrt{\log m}}{\rho_0\sqrt{S_j - 1 - A^2}/\sqrt{3}}\le \frac{\sqrt{3}\cdot 60t\sqrt{\log m}}{\rho_0\sqrt{S_j - 2}}\le \frac{200t\sqrt{\log m}}{\rho_0\sqrt{S_j}}.\]
Conditioning on any such outcomes of $\alpha_h$ for all $h\in [\ell]\setminus\{i,j\}$, as well as on an outcome of $\alpha_j$, the event of $i$ being bad occurs only when $\alpha_i$ falls into a particular interval of length $20\sqrt{\log m}/\rho_0$, and the probability for this is at most $10\sqrt{\log m}/\rho_0$. So we can conclude that 
\[\P[i \text{ bad and }\cE_1(j,t)]\le \frac{200t\sqrt{\log m}}{\rho_0\sqrt{S_j}}\cdot \frac{10\sqrt{\log m}}{\rho_0} = \frac{2000t\log m}{\rho_0^2\sqrt{S_j}}.\]

If $\rho_0 > 10t\sqrt{\log m}$, in the event $\cE_1(j,t)$ the sum $\rho_0\sum_{h\in [\ell]\setminus\{i,j\}}\alpha_h\ang{v_h,v_i}$ must fall into the interval of length $20t\sqrt{\log m}+4\rho_0\le 6\rho_0$ centered at $\lambda_j$. By \cref{lem:continuous-littlewood-offord}, this happens with probability at most
\[\frac{6\rho_0}{\rho_0\sqrt{S_j - 1 - A^2}/\sqrt{3}}\le \frac{\sqrt{3}\cdot 6}{\sqrt{S_j - 2}}\le \frac{20}{\sqrt{S_j}}.\]
Conditioning on any such outcomes of $\alpha_h$ for all $h\in [\ell]\setminus\{i,j\}$, by \eqref{eq:conditional-on-alpha-h} the probability that $i$ is bad and $\cE_1(j,t)$ holds is at most  $(2000t/\rho_0^2)\log m$. So we can conclude 
\[\P[i\text{ bad}\text{ and }\cE_1(j,t)]\le  \frac{20}{\sqrt{S_j}}\cdot\frac{2000t}{\rho_0^2}\log m=\frac{40000t}{\rho_0^2\sqrt{S_j}}\log m. \qedhere\]
\end{proof}

Now, for an index $j\in [\ell]$, we can bound the probability of $j$ being activated and not near bad.

\begin{lemma}\label{lem:activated-not-near-bad}
    For $j\in [\ell]$, we have
    \[\P[j\text{ is activated and not near bad}]\le 120000\cdot \frac{\rho_1^2}{\rho_0^2}\cdot\log m\sqrt{S_j}.\]
\end{lemma}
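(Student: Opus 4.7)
The plan is to union bound over the dyadic scales $t=2^h$ from \cref{def:activated}, and for each fixed $t$ combine \cref{lem:almost-indep} with Markov's inequality. First I would observe that if $j$ is not near bad, then $j$ itself cannot be bad (otherwise $|\ang{v_j,v_j}|=1>0.9$ would witness that $j$ is near bad), and every bad $i\in[\ell]$ automatically satisfies $|\ang{v_i,v_j}|\le 0.9$. Consequently, on the event ``$j$ is $t$-activated and not near bad'', condition (2) of \cref{def:activated} can be restated as
\[
\sum_{\substack{i\ne j\\ |\ang{v_i,v_j}|\le 0.9\\ i\text{ bad}}} \ang{v_i,v_j}^2 > \frac{3t^2}{4\rho_1^2},
\]
in which each remaining index $i$ satisfies the hypothesis of \cref{lem:almost-indep}.

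For each fixed $h\in\{0,1,\dots,\ceil{\log_2 m}\}$ with $t=2^h$, I would apply Markov's inequality to the nonnegative random variable obtained by multiplying the left-hand side above by the indicator of $\cE_1(j,t)$. This gives
\[
\P[j\text{ is $t$-activated and not near bad}]\le \frac{4\rho_1^2}{3t^2}\sum_{\substack{i\ne j\\ |\ang{v_i,v_j}|\le 0.9}} \ang{v_i,v_j}^2\cdot \P[i\text{ bad and }\cE_1(j,t)].
\]
Bounding each summand by \cref{lem:almost-indep} with $\P[i\text{ bad and }\cE_1(j,t)]\le 40000t\log m/(\rho_0^2\sqrt{S_j})$, and using the trivial $L^2$ bound $\sum_{i\ne j,\,|\ang{v_i,v_j}|\le 0.9}\ang{v_i,v_j}^2\le S_j$, yields
\[
\P[j\text{ is $t$-activated and not near bad}]\le \frac{4\rho_1^2}{3t^2}\cdot \frac{40000t\log m}{\rho_0^2\sqrt{S_j}}\cdot S_j = \frac{160000\,\rho_1^2\sqrt{S_j}\log m}{3\rho_0^2\, t}.
\]

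Finally, a union bound over $h=0,1,\dots,\ceil{\log_2 m}$ sums a geometric series in $t=2^h$ with ratio $1/2$, which is at most twice the $h=0$ term. This produces
\[
\P[j\text{ is activated and not near bad}]\le \frac{320000}{3}\cdot\frac{\rho_1^2\sqrt{S_j}\log m}{\rho_0^2}<120000\cdot \frac{\rho_1^2\sqrt{S_j}\log m}{\rho_0^2},
\]
as required. The step that needs the most care is leveraging the ``not near bad'' hypothesis to restrict the sum in condition (2) of \cref{def:activated} to pairs $(i,j)$ with $|\ang{v_i,v_j}|\le 0.9$, which is precisely what licenses the termwise use of \cref{lem:almost-indep}; once this reduction is made, Markov's inequality together with $\sum_{i\in[\ell]}\ang{v_i,v_j}^2=S_j$ drives the rest of the calculation.
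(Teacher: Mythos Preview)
Your proposal is correct and follows essentially the same approach as the paper: define the restricted sum over bad $i$ with $|\ang{v_i,v_j}|\le 0.9$, use the ``not near bad'' hypothesis to identify it with the sum in condition (2) of \cref{def:activated}, apply Markov's inequality to this sum times $1_{\cE_1(j,t)}$, bound each term via \cref{lem:almost-indep}, and finish with a geometric-series union bound over the dyadic scales $t=2^h$. The constants and the structure of the argument match the paper's proof almost exactly.
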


\begin{proof}
     First, for any $h = 0,\dots, \ceil{\log_2 m}$, we bound the probability of $j$ being $t$-activated for $t=2^h$ and not near bad. For convenience, define the quantity
    \[T_j =\frac{1}{3} \cdot \rho_1^2 \sum_{\substack{i\in [\ell] \text{ bad}\\ |\ang{v_i, v_j}|\le 0.9}}\ang{v_i,v_j}^2 = \frac{1}{3} \cdot\rho_1^2\sum_{\substack{i\in [\ell]\\ |\ang{v_i, v_j}|\le 0.9}}\ang{v_i,v_j}^2\cdot 1_{i\text{ bad}},\]
    where $1_{i\text{ bad}}$ is the indicator variable for the event of $i$ being bad. Now, if $j$ is not near bad, then $T_j = \frac{1}{3} \cdot\rho_1^2\sum_{i\in [\ell]\text{ bad}}\ang{v_i,v_j}^2$ agrees with the sum in the event $\cE_2(j,t)$. Therefore, if $j$ is $t$-activated and not near bad, we must have $T_j > t^2/4$. Thus, we have
    \[\P[j\text{ is $t$-activated and not near bad}] \le \P[\cE_1(j,t)\text{ and }T_j > t^2/4] = \P[T_j\cdot 1_{\cE_1(j,t)} > t^2/4].\]
    By \cref{lem:almost-indep}, the expectation of $T_j\cdot 1_{\cE_1(j,t)}$ can be computed as
    \begin{align*}
        \E[T_j\cdot 1_{\cE_1(j,t)}] &=\frac{1}{3} \cdot\rho_1^2 \sum_{\substack{i\in [\ell]\\|\ang{v_i,v_j}|\le 0.9}}\ang{v_i,v_j}^2\cdot\E[1_{i\text{ bad}}\cdot 1_{\cE_1(j,t)}]\\
        &=\frac{1}{3} \cdot\rho_1^2 \sum_{\substack{i\in [\ell]\\|\ang{v_i,v_j}|\le 0.9}}\ang{v_i,v_j}^2\cdot \P[i\text{ bad}\text{ and }\cE_1(j,t)]\\
        &\le \frac{1}{3} \cdot\rho_1^2 \sum_{\substack{i\in [\ell]\\|\ang{v_i,v_j}|\le 0.9}}\ang{v_i, v_j}^2\cdot \frac{40000t}{\rho_0^2\sqrt{S_j}}\log m\\
        &\le \frac{15000t\rho_1^2}{\rho_0^2\sqrt{S_j}}\log m \sum_{i\in [\ell]}\ang{v_i, v_j}^2= \frac{15000t\rho_1^2}{\rho_0^2\sqrt{S_j}}\log m\cdot S_j
        = \frac{\rho_1^2 }{\rho_0^2}\cdot 15000t\log m\sqrt{S_j}.
    \end{align*}
    Thus by Markov's inequality, we have
    \[
        \P[j\text{ is $t$-activated and not near bad}] = \P[T_j\cdot 1_{\cE_1(j,t)} > t^2/4]\le \frac{4}{t^2}\cdot \E[T_j\cdot 1_{\cE_1(j,t)}]\le 60000\cdot\frac{\rho_1^2}{\rho_0^2}\cdot \frac{\log m\sqrt{S_j}}{t}.
    \]
     Now, taking a union bound, we have
    \begin{align*}
        \P[j\text{ is activated and not near bad}] &\le \sum_{h = 0}^{\ceil{\log_2 m}} \P[j\text{ is $2^h$-activated and not near bad}]\\
        &\le \sum_{h = 0}^{\ceil{\log_2 m}} 60000 \cdot\frac{\rho_1^2}{\rho_0^2}\cdot \frac{\log m\sqrt{S_j}}{2^h}
        \le 120000\cdot \frac{\rho_1^2}{\rho_0^2}\log m \sqrt{S_j}. \qedhere
    \end{align*}
\end{proof}

Now we are ready to bound the number of close indices that are activated and not near bad.

\begin{corollary}\label{cor:not-near-bad-in-F}
    The expected number of close indices that are activated and not near bad is at most $10^5\cdot \rho_1\ell\log^{3/2} m/\rho_0^{3/2}$.
\end{corollary}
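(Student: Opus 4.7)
My plan is to combine Lemma~\ref{lem:prob-activated-in-F} and Lemma~\ref{lem:activated-not-near-bad} via the elementary inequality $\min(a,b)\le\sqrt{ab}$. The key observation is that Lemma~\ref{lem:prob-activated-in-F} bounds $\P[j\text{ is close and activated}]$ by something proportional to $1/\sqrt{S_j}$, while Lemma~\ref{lem:activated-not-near-bad} bounds $\P[j\text{ is activated and not near bad}]$ by something proportional to $\sqrt{S_j}$. These two $S_j$-dependencies are precisely inverse to each other, so the geometric mean of the two bounds is independent of $S_j$ and sums trivially over $j\in[\ell]$.

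Concretely, for each $j\in[\ell]$ the event that $j$ is close, activated and not near bad is contained in the event treated by Lemma~\ref{lem:prob-activated-in-F} (by forgetting ``not near bad'') and also in the event treated by Lemma~\ref{lem:activated-not-near-bad} (by forgetting ``close''). Its probability is therefore bounded by the minimum, and hence by the geometric mean, of the two estimates:
\[
\P[j\text{ close, activated, not near bad}]\le \sqrt{\frac{1600\log^2 m}{\rho_0\sqrt{S_j}}\cdot 120000\cdot\frac{\rho_1^2}{\rho_0^2}\cdot \log m\sqrt{S_j}}=\sqrt{1600\cdot 120000}\cdot\frac{\rho_1\log^{3/2}m}{\rho_0^{3/2}}.
\]
Since $\sqrt{1600\cdot 120000}<14000$, this right-hand side is at most $14000\cdot \rho_1\log^{3/2}m/\rho_0^{3/2}$, uniformly in $j$.

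Finally, summing this bound over all $j\in[\ell]$ yields the expected number of close indices that are activated and not near bad is at most $14000\cdot\rho_1\ell\log^{3/2}m/\rho_0^{3/2}$, comfortably below the claimed $10^5\cdot\rho_1\ell\log^{3/2}m/\rho_0^{3/2}$. There is no real obstacle here: Lemmas~\ref{lem:prob-activated-in-F} and~\ref{lem:activated-not-near-bad} have been engineered so that the $S_j$-dependence cancels in the geometric mean, and the only substantive step is the set-containment observation that justifies applying $\min(a,b)\le\sqrt{ab}$.
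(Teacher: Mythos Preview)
Your proposal is correct and follows exactly the same approach as the paper: bound $\P[j\text{ close, activated, not near bad}]$ by the minimum of the bounds from \cref{lem:prob-activated-in-F} and \cref{lem:activated-not-near-bad}, apply $\min(a,b)\le\sqrt{ab}$ so the $\sqrt{S_j}$ factors cancel, and sum over $j\in[\ell]$. The paper's proof is line-for-line the same argument, with the slightly coarser constant $10^5$ in place of your $14000$.
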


\begin{proof}
    For each $j\in [\ell]$, by \cref{lem:activated-not-near-bad} and \cref{lem:prob-activated-in-F}, we have
    \begin{align*}
        &\P[j\text{ is close, activated and not near bad}]\\
        &\qquad\le \min \Big(\P[j\text{ is activated and not near bad}],\, \P[j\text{ is close and activated}]\Big)\\
        &\qquad\le \min \Big(120000\cdot \frac{\rho_1^2}{\rho_0^2}\log m \sqrt{S_j},\, \frac{1600\log^2 m}{\rho_0\sqrt{S_j}}\Big)\\
        &\qquad\le \Big(120000\cdot \frac{\rho_1^2}{\rho_0^2}\log m \sqrt{S_j}\cdot \frac{1600\log^2 m}{\rho_0\sqrt{S_j}}\Big)^{1/2}\\
        &\qquad\le 10^5\cdot \frac{\rho_1}{\rho_0^{3/2}}\log^{3/2} m.
    \end{align*}
    Summing over all $j\in [\ell]$, it follows that the expected number of close indices that are activated and not near bad is at most $10^5\cdot \rho_1\ell\log^{3/2} m/\rho_0^{3/2}$.
\end{proof}

\subsection{Proof of \cref{prop:expected-size-F}}

Combining the estimates proved previously, we are now ready to prove \cref{prop:expected-size-F}.

\begin{proof}[Proof of \cref{prop:expected-size-F}]
We separately bound the expected number of close indices $j\in [\ell]$ for each of the five types (1) to (5) listed in \cref{claim:close-categories}. Recall that we defined the relevant parameters as follows:

    \begin{itemize}
        \item $\rho_0 = m^{3/19}\log^{-3/19}m$,
        \item $\rho_1 = m^{1/19}\log^{-1/19}m$,
        \item $\delta = m^{1/38}\log^{9/19}m$,
    \end{itemize}
    and recall our assumption $\ell\le 10^{-10}m^{13/19}\log^{-32/19}m$.

    By \cref{lem:prob-bad-close}, the expected number of close indices $j\in [\ell]$ of type (1), i.e.\ the expected number of close indices that are bad, is at most 
    \[\frac{800\log m}{\rho_0\rho_1} \cdot \ell\le 8\cdot 10^{-8}m^{9/19}\log^{-9/19}m\le 8\cdot 10^{-8}\sqrt{m}.\]
    
    By \cref{lem:not-bad-not-activated}, the expected number of close indices $j\in [\ell]$ of type (2), i.e.\ the expected number of close indices that are not bad and not activated, is at most
    \[\frac{2}{m^2}\cdot \ell\le \frac{2}{m}\le 10^{-8}\sqrt{m}.\]

    By \cref{lem:num-heavy-in-F}, the expected number of close indices $j\in [\ell]$ of type (3), i.e.\ the expected number of close indices that are activated and heavy, is at most
    \begin{align*}
        \frac{1600\ell\log^2 m}{\delta\rho_0}\le 2\cdot 10^{-7}\sqrt{m}. 
    \end{align*}
    
    By \cref{lem:num-near-bad-light-in-F}, the expected number of close indices $j\in [\ell]$ of type (4), i.e.\ the expected number of close indices that are light and near bad, is at most
    \begin{align*}
        \frac{2000\delta \ell\log m}{\rho_0\rho_1}\le 2\cdot 10^{-7}\sqrt{m}.
    \end{align*}
    
    By \cref{cor:not-near-bad-in-F}, the expected number of close indices $j\in [\ell]$ of type (5), i.e.\ the expected number of close indices that are activated and not near bad, is at most 
    \begin{align*}
         10^5\cdot \frac{\rho_1}{\rho_0^{3/2}}\ell\log^{3/2} m \le 10^{-5}\sqrt{m}.
    \end{align*}
    
    Summing the bounds above yields that the expected number of close indices is at most $\sqrt{m}/400$.
\end{proof}

\section{Acknowledgments}

The second author would like to thank Marcelo Campos for inspiring discussions.

\printbibliography

\appendix

\section{Proofs of probabilistic lemmas}\label{app:probability-proofs}

In this appendix, for the reader's convenience, we collect the proofs of the probabilistic lemmas in \cref{sec:prelim}. The following proof is given in \cite{yehuda2021slicing}.

\begin{proof}[Proof of \cref{lem:many-scale-anticoncentration}]

We first need the following claim.

\begin{claim}[\cite{yehuda2021slicing}]\label{claim:scale-magnitude-interval}
    Let $u\in \R^n$ be a vector with $||u||_2 = 1$, then for  uniformly random $x\in\bin^n$, we have
    \[\P\Big[\frac{1}{5}\le |\ang{x,u}|\le 5\Big] \ge \frac{1}{5}.\]
\end{claim}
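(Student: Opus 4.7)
The plan is to use the moment method. Let $Y = \ang{x,u} = \sum_{i=1}^n x_i u_i$, a sum of independent Rademacher-weighted terms. The first step is to compute the low moments: since $\E[x_i]=0$ and $\E[x_i^2]=1$, we have $\E[Y]=0$ and $\E[Y^2] = \sum_i u_i^2 = 1$. Expanding $Y^4$ and keeping only the terms whose index tuples contribute (pairs and quadruples of equal indices), one gets
\[
\E[Y^4] \;=\; \sum_i u_i^4 + 3\!\!\sum_{i\ne j} u_i^2 u_j^2 \;=\; 3\Bigl(\sum_i u_i^2\Bigr)^2 - 2\sum_i u_i^4 \;\le\; 3.
\]

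With these two moments in hand, the upper tail is a one-line application of Markov's inequality: $\P[|Y|>5] = \P[Y^2>25] \le \E[Y^2]/25 = 1/25$. The part I expect to need slightly more care is the lower tail, i.e.\ showing that $|Y|$ is not too small too often; a pure second-moment argument is not enough because the mean of $Y$ is $0$. I would use the Paley--Zygmund inequality applied to the nonnegative random variable $Z = Y^2$, whose first moment is $1$ and whose second moment $\E[Y^4]$ is bounded by $3$. With parameter $\theta = 1/25$ this yields
\[
\P\bigl[Y^2 \ge 1/25\bigr] \;\ge\; (1-1/25)^2\,\frac{\E[Y^2]^2}{\E[Y^4]} \;\ge\; \frac{(24/25)^2}{3}.
\]

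Finally, combining the two tail estimates gives
\[
\P\Bigl[\tfrac{1}{5}\le |Y| \le 5\Bigr] \;\ge\; \P[|Y|\ge 1/5] - \P[|Y|>5] \;\ge\; \frac{(24/25)^2}{3} - \frac{1}{25} \;>\; \frac{1}{5},
\]
which is exactly the claimed bound. The only subtlety is the fourth-moment calculation; everything else is a direct application of standard moment inequalities.
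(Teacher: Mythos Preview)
Your proof is correct and follows essentially the same approach as the paper: compute $\E[Y^2]=1$ and $\E[Y^4]\le 3$, apply Markov's inequality to $Y^2$ for the upper tail and Paley--Zygmund to $Z=Y^2$ for the lower tail, and subtract. Your fourth-moment expansion (with the coefficient $3$ on $\sum_{i\ne j}u_i^2u_j^2$) is in fact the correct one, and your final numerical estimate is slightly sharper than the paper's, but the argument is otherwise identical.
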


\begin{proof}
    Let $Z:= \ang{x,u}^2$ and note that $\E[Z] = ||u||_2^2 = 1$. Moreover, we have 
    \[\E[Z^2] = \sum_{i = 1}u_i^4 + 6\sum_{i\ne j}u_i^2u_j^2 \le 3\Big(\sum_{i = 1}^n u_i^2\Big)^2 = 3||u||_2^4 = 3 \]
    By Markov's inequality, we obtain
    \[\P[Z\ge 5^2]\le \frac{\E[Z]}{5^2}=\frac{1}{25}.\]
    On the other hand, by the Paley-Zygmund inequality, we have 
    \[\P\Big[Z\ge \frac{1}{5^2}\Big]=\P\Big[Z\ge \frac{\E[Z]}{25}\Big]\ge \Big(1-\frac{1}{25}\Big)^2\cdot \frac{\E[Z]^2}{\E[Z^2]}\ge \Big(1-\frac{1}{25}\Big)^2\cdot \frac{1}{3}\ge \frac{1}{4}.\]
    Thus we have
    \[\P\Big[\frac{1}{5}\le |\ang{x,u}|\le 5\Big]\ge \P\Big[Z\ge \frac{1}{5^2}\Big]- \P[Z\ge 5^2] \ge \frac{1}{4} - \frac{1}{25}\ge \frac{1}{5}. \qedhere\]
\end{proof}

Now we are ready to prove the lemma. Let $I_1,\dots, I_s\subseteq [n]$ be disjoint subsets as in \cref{def:scales}, satisfying $||v|_{I_s}||_2 \ge 10\delta$ and $||v|_{I_i}||_2 \ge 100||v|_{I_{i+1}}||_2$ for $i\in [s-1]$. Furthermore, let $y\sim \bin^n$ be a uniformly random vector in $\bin^n$, and let $\eps_1,\dots, \eps_s\sim \bin$ be independent Rademacher random variables (also independent from $y$). Finally, let $x\in \bin^n$ be the vector defined as follows: for $j\in [n]$, if there exists $h\in [s]$ with $j\in I_h$, then define $x_j = y_j\cdot \eps_h$; otherwise $x_j = y_j$. Note that $x$ is uniformly distributed in $\bin^n$.

For convenience let $v^{(i)} = v|_{I_i}$ and $y^{(i)} = y|_{I_i}$ for all $i\in [s]$. Note that since $I_1,\dots, I_s$ are disjoint, $y^{(1)},\dots, y^{(s)}$ are independent random vectors. Let $Z_i = \ang{y^{(i)}, v^{(i)}}$ for every $i\in [s]$ and note that $Z_1,\dots, Z_s$ are independent. For each $i\in [s]$, by \cref{claim:scale-magnitude-interval}, with probability at least $1/5$ we have
\begin{equation}\label{eq:scale-interval}
    \frac{||v^{(i)}||_2}{5}\le |Z_i| \le 5||v^{(i)}||_2.
\end{equation}
Thus, by the Chernoff bound, with probability at least $1 - e^{-s/20}$ at least $s/60$ indices $i\in [s]$ satisfy \eqref{eq:scale-interval}. 

Let $R\subseteq [s]$ denote the set of indices $i\in [s]$ satisfying \eqref{eq:scale-interval}. Then we have $\P[|R|<s/60 ]\le e^{-s/20}$. Note that for $r,r'\in R$ with $r < r'$, we have 
\[|Z_r|\ge \frac{||v^{(r)}||_2}{5}\ge 20||v^{(r')}||_2\ge 4|Z_{r'}|\ge \frac{||v^{(r')}||_2}{5} \ge 2\delta.\]

\begin{claim}\label{claim:unique-assignment-many-scale}
    For any fixed outcome of $y\in \bin^n$, and any fixed outcomes of $\eps_h$ for $h\in [s]\setminus R$, there is at most one possible outcome for $\eps|_R\in \bin^R$ satisfying $|\ang{x,v} - b| \le \delta$.
\end{claim}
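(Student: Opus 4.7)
The plan is to show that the statement follows from the triangle inequality together with the rapid geometric decrease of $|Z_r|$ across $r \in R$. Recall that with $y \in \{\pm 1\}^n$ and the $\eps_h$ for $h \in [s]\setminus R$ fixed, the inner product $\ang{x,v}$ is an affine function of $(\eps_r)_{r \in R}$, namely
\[\ang{x,v} = \sum_{r \in R} \eps_r Z_r + C,\]
where $C$ is a constant determined by the fixed data. So the question reduces to: how many sign patterns $\eps|_R \in \{\pm 1\}^R$ can land in an interval of length $2\delta$ around $b - C$?

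My plan is to assume for contradiction that two distinct sign patterns $\eps, \eps' \in \{\pm 1\}^R$ both satisfy the bound $|\ang{x,v} - b| \le \delta$. Setting $\xi = \eps - \eps' \in \{-2,0,2\}^R$ (not identically zero), the triangle inequality gives $\bigl|\sum_{r \in R} \xi_r Z_r\bigr| \le 2\delta$. I would then pick $r^* \in R$ to be the smallest index at which $\xi_{r^*} \ne 0$, so $|\xi_{r^*}| = 2$, and bound
\[\Bigl|\sum_{r \in R} \xi_r Z_r\Bigr| \ge 2|Z_{r^*}| - 2\!\!\sum_{\substack{r \in R\\ r > r^*}}\!\! |Z_r|.\]

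The heart of the argument is the geometric decrease established just before the claim: for $r, r' \in R$ with $r < r'$, we have $|Z_r| \ge 4|Z_{r'}|$. Iterating this along the indices of $R$ larger than $r^*$ gives $\sum_{r \in R,\, r > r^*} |Z_r| \le |Z_{r^*}| (1/4 + 1/16 + \cdots) = |Z_{r^*}|/3$. Plugging in yields $\bigl|\sum_{r \in R} \xi_r Z_r\bigr| \ge 2|Z_{r^*}| - 2|Z_{r^*}|/3 = (4/3)|Z_{r^*}|$. Since $|Z_{r^*}| \ge 2\delta$ (also established just before the claim), this lower bound exceeds $8\delta/3 > 2\delta$, contradicting the upper bound obtained from the triangle inequality.

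I do not foresee a real obstacle: the step that does all the work is just the standard geometric-series bound exploiting the factor-$4$ separation between consecutive $|Z_r|$ on $R$, and both that separation and the floor $|Z_r| \ge 2\delta$ are already in hand. The only thing to be slightly careful about is writing $\ang{x,v}$ cleanly as $\sum_r \eps_r Z_r + C$ by using that $x_j = y_j$ outside $\bigcup_h I_h$ and $x_j = y_j \eps_h$ on $I_h$, and verifying that $C$ does not depend on $(\eps_r)_{r \in R}$ once $y$ and the $\eps_h$ for $h \in [s]\setminus R$ have been fixed.
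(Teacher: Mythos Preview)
Your proposal is correct and matches the paper's proof essentially line for line: the paper also writes $\ang{x,v}-b$ as an affine-linear function of $\eps|_R$ with coefficients $Z_r$, assumes two distinct sign patterns work, takes the minimal index where they differ, and uses the factor-$4$ geometric decay together with $|Z_r|\ge 2\delta$ to derive the same lower bound $(4/3)|Z_{r^*}|\ge 8\delta/3>2\delta$, contradicting the upper bound $2\delta$.
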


\begin{proof}
Given all the fixed outcomes, $\ang{x,v}-b$ is an affine-linear function in the entries of $\eps|_R$. For each $r\in R$, the coefficient of $\eps_r$ in this affine-linear function is precisely $Z_r = \ang{y^{(r)}, v^{(r)}}$.

    Suppose for contradiction that there are two distinct outcomes for $\eps|_R\in \bin^R$ such that the absolute value of this affine-linear function is at most $\delta$. Denoting the entries of these outcomes by $\eps_r$ and $\eps'_r$ for $r\in R$, we must then have
    \[\Big|\sum_{r\in R}(\eps_r - \eps_r')Z_r\Big|=\Big|\sum_{r\in R}Z_r\eps_r - \sum_{r\in R}Z_r\eps'_r\Big|\le 2\delta.\]
    On the other hand, taking the minimum index $q\in R$ with $\eps_q\ne \eps_q'$, we have
    \[\Big|\sum_{r\in R}(\eps_r - \eps_r')Z_r\Big|\ge |\eps_q - \eps_q'|\cdot |Z_q|-\Big|\sum_{\substack{r\in R\\r>q}}(\eps_r - \eps_r')Z_r\Big|\ge 2|Z_q|-2\sum_{\substack{r\in R\\r>q}}|Z_r|\ge 2|Z_q|-2\sum_{\substack{r\in R\\r>q}}\frac{|Z_q|}{4^{r-q}}\ge \frac{4}{3}|Z_{q}| \ge \frac{8\delta}{3}.\]
    Here, the third inequality uses the fact that $|Z_r|\ge 4|Z_{r'}|$ for all $r,r'\in R$ with $r < r'$, and the last inequality uses the fact that $|Z_{q}|\ge 2\delta$. This gives a contradiction and finishes the proof of the claim.
\end{proof}

    By \cref{claim:unique-assignment-many-scale}, conditioning on any given outcome of $R$, we have $\P[|\ang{x,v} - b| \le \delta\mid R ]\le 2^{-|R|}$. In particular, this gives
    \[\P\Big[|\ang{x,v} - b| \le \delta \,\Big|\, |R|\ge s/60\Big]\le 2^{-s/60}.\]
    Thus we can conclude 
    \[
        \P[|\ang{x,v} - b|\le \delta]\le \P\Big[|\ang{x,v} - b| \le \delta \,\Big|\, |R|\ge s/60\Big] + \P[|R| < s/60]\le 2^{-s/60} + e^{-s/20} \le e^{-s/100},
    \]
    where the last inequality uses the assumption $s \ge 100$.
\end{proof}

The following proof is standard in the literature and adapted from \cite{klein2022slicing}.

\begin{proof}[Proof of \cref{thm:littlewood-offord}]
    First consider the case where $p = (0,\dots, 0)$, then $x$ is uniformly distributed in $\bin^n$. Without loss of generality, we may assume $v_i\ge 0$ for all $i \in [n]$. Then the set of $x\in \bin^n$ satisfying $|\ang{x,v} - b| < t$ forms an antichain in the $m$-dimensional subcube generated by $\{j\in [n] \mid |v_j|\ge t\}$. Thus by Sperner's theorem and Stirling's approximation, we have
    \[\P[|\ang{x,v} - b| < t] \le \frac{1}{2^m}\cdot\binom{m}{\floor{m/2}} \le  \frac{1}{2^m}\cdot\frac{e}{\pi}\cdot\frac{2^m}{\sqrt{m}}\le \frac{1}{\sqrt{m}}.\]

    Now we reduce the general case (for any $p$ with $||p||_\infty \le 1/2$) to the case $p = (0,\dots, 0)$. Let $I_+\subseteq [n]$ be the set of indices $i\in [n]$ with $p_i>0$, and let $I_-\subseteq [n]$ be the set of indices $i\in [n]$ with $p_i<0$. We now sample the random vector $x\sim \mu_p$ in a multi-step procedure as follows. First, select independent random subsets $J_+\subseteq I_+$ and $J_-\subseteq I_-$ such that independently each $i\in I_+$ is contained in $J_+$ with probability $p_i$, and independently each $i\in I_-$ is contained in $J_-$ with probability $-p_i$. Then, let us define $x_j=1$ for all $j\in J_+$ and $x_j=-1$ for all $j\in J_-$. Finally, let $S=[n]\setminus (J_+\cup J_-)$, and choose $x|_{S}\sim \bin^{S}$ uniformly at random. It is not difficult to check that the resulting overall distribution of $x$ is precisely the distribution $\mu_p$.

    Let $m'$ be the number of indices $i\in S=[n]\setminus (J_+\cup J_-)$ with $|v_i|\ge t$. Conditioning on any outcomes of $J_+$ and $J_-$, and setting $b'=b+\sum_{j\in J_-}v_j-\sum_{j\in J_+}v_j$, we have
    \[\P[|\ang{x,v} - b| < t\mid J_+,J_-]=\P[|\ang{x|_S,v|_S} - b'| < t\mid J_+,J_-]\le \frac{1}{\sqrt{m'}}\]
    by the bound for the case $p = (0,\dots, 0)$ applied to the uniformly random vector $x|_{S}\sim \bin^{S}$. In particular, for any outcomes of $J_+$ and $J_-$ such that $m'\ge m/4$, this probability is at most $1/\sqrt{m/4}$. Therefore we obtain
    \[\P[|\ang{x,v} - b| < t\mid m'\ge m/4]\le \frac{1}{\sqrt{m/4}}=\frac{2}{\sqrt{m}}\]
     As $||p||_\infty \le 1/2$, every index is contained in $J_+\cup J_-$ with probability at most $1/2$. Therefore we have $\E[m']\ge m/2$, and consequently by Chebyshev's inequality,
    \[\P[m' < m/4]\le \P[m'-\E[m'] \ge m/4]\le \frac{\var[m']}{(m/4)^2}\le \frac{m/4}{m^2/16}= \frac{4}{m}.\]
    Thus, we have
    \[\P_{x\sim \mu_p}[|\ang{x,v} - b| < t] \le \P[|\ang{x,v} - b| < t\mid m'\ge m/4]+\P[m' < m/4]\le \frac{2}{\sqrt{m}}+\frac{4}{m}\le \frac{10}{\sqrt{m}}.\qedhere\]  
\end{proof}

The proof of \cref{lem:continuous-littlewood-offord} follows directly from a more general statement proved by Bobkov and Chistyakov~\cite{BobkovChistyakov2015-ConcentrationFunctions}. Recall that a random variable is log-concave if the logarithm of its probability density function is concave. In particular, the uniform distribution on a continuous real interval is log-concave.

\begin{prop}[\cite{BobkovChistyakov2015-ConcentrationFunctions}]\label{prop:log-concave-anticoncentration}
    If $X_1,\dots, X_n$ are real-valued independent log-concave random variables, then for $X = X_1 + \dots + X_n$ and any $t > 0$, we have
    \[\sup_{x\in \R}\P[X\in [x, x+t]] \le \frac{t}{\sqrt{\Var(X)+(t^2/12)}}.\]
\end{prop}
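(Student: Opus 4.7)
The plan is to reduce the general statement to a sharp sup-norm bound for a single log-concave density, via a standard smoothing-by-convolution trick.

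First, by the Pr\'ekopa--Leindler inequality, the convolution of two independent log-concave densities on $\R$ is again log-concave: if $f$ and $g$ are log-concave, then $(z,y)\mapsto f(z-y)g(y)$ is jointly log-concave on $\R^2$ (since $\log f(z-y)+\log g(y)$ is a sum of concave functions of linear maps), so Pr\'ekopa--Leindler applied to this joint density yields log-concavity of $(f \ast g)(z)$. Iterating shows that $X=X_1+\cdots+X_n$ is log-concave, with variance $\sigma^2:=\Var(X)$. It therefore suffices to prove, for a single log-concave random variable $X$ with variance $\sigma^2$, that
\[
\sup_{x\in\R}\P[X\in[x,x+t]]\;\le\;\frac{t}{\sqrt{\sigma^2+t^2/12}}.
\]

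Next, I would introduce an independent auxiliary $U\sim U([0,t])$ and set $Y:=X+U$. Then $Y$ is log-concave (by the same convolution argument), and $\Var(Y)=\sigma^2+t^2/12$ since $\Var(U)=t^2/12$. A direct computation of the convolution density gives
\[
f_Y(y)\;=\;\frac{F_X(y)-F_X(y-t)}{t}\;=\;\frac{\P[X\in[y-t,y]]}{t},
\]
so $\sup_{x\in\R}\P[X\in[x,x+t]]=t\cdot\|f_Y\|_\infty$. It therefore suffices to establish $\|f_Y\|_\infty\le 1/\sqrt{\Var(Y)}$.

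The main obstacle, and the heart of the proof, is precisely this sharp sup-norm estimate, which I would isolate as a standalone lemma: \emph{for any log-concave probability density $g$ on $\R$ with variance $\tau^2>0$, one has $\|g\|_\infty\cdot\tau\le 1$}, with equality attained (in the limit) by the one-sided exponential $\lambda e^{-\lambda x}\one_{x\ge 0}$, for which $\|g\|_\infty=\lambda$ and $\tau=1/\lambda$. To prove the lemma, I would translate so that a mode of $g$ lies at the origin with $g(0)=M:=\|g\|_\infty$; log-concavity of $\log g$ combined with the peak normalization forces at least exponential one-sided decay, and a short extremal argument (comparing $g$ to members of the two-parameter family $M\mapsto M e^{-a x}\one_{x\ge 0}+M e^{b x}\one_{x\le 0}$ with the log-concavity constraint $a,b\ge 0$, and solving the resulting optimization subject to $\int g=1$) shows that the variance is maximized in the degenerate one-sided case, giving $\tau\le 1/M$. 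Applying this lemma to $g=f_Y$ with $\tau^2=\sigma^2+t^2/12$ yields $\|f_Y\|_\infty\le 1/\sqrt{\sigma^2+t^2/12}$, which combined with the identity from the previous paragraph completes the proof.
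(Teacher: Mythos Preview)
The paper does not prove this proposition at all; it is quoted verbatim from Bobkov--Chistyakov and used as a black box to derive \cref{lem:continuous-littlewood-offord}. So there is no in-paper argument to compare against.

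Your overall route is correct and is, in fact, the approach of the original Bobkov--Chistyakov proof: close log-concavity under convolution to reduce to a single log-concave $X$; smooth by an independent $U\sim U([0,t])$ so that $\sup_x\P[X\in[x,x+t]]=t\,\|f_Y\|_\infty$ with $\Var(Y)=\Var(X)+t^2/12$; and finish with the sharp inequality $\|g\|_\infty\sqrt{\Var(g)}\le 1$ for a log-concave density $g$, extremized by the one-sided exponential. The reduction steps are clean.

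The one genuine gap is your sketch of the key lemma. The sentence ``log-concavity \ldots\ forces at least exponential one-sided decay, and a short extremal argument (comparing $g$ to members of the two-parameter family $M e^{-ax}\one_{x\ge 0}+M e^{bx}\one_{x\le 0}$ \ldots) shows that the variance is maximized in the degenerate one-sided case'' does not work as written: an inequality $g(x)\le M e^{-a x}$ on $x\ge 0$ gives a pointwise \emph{upper} envelope for $g$, but being below an envelope does not by itself bound the variance from above (within the envelope, mass can still sit far out in the tail). You have correctly identified the extremizer but not supplied a mechanism that forces it. One standard way to make this rigorous is to normalize $M=1$, observe that for log-concave $g$ with mode at $0$ one has $g(sx)\ge g(x)^{s}$ for $s\in[0,1]$ and $x$ on either side of the mode, and use this monotonicity of $g(x)^{1/x}$ to compare the layer-cake representation of the second moment directly to that of the exponential; alternatively, one can quote the known moment inequality $\|g\|_\infty^{p}\,\E|X-\mathrm{mode}(X)|^{p}\le \Gamma(p+1)$ for log-concave $g$ (the case $p=2$ is exactly what you need after centering). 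Either way, the lemma requires a real argument beyond ``compare to the exponential family.''
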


\begin{proof}[Proof of \cref{lem:continuous-littlewood-offord}]
    Apply the proposition to $X = \sum_{i = 1}^n X_i$ with $X_i \sim U[a_i, b_i]$ for $i\in [n]$ and note that since $X_i$ is uniform on a bounded interval, it is log-concave. Thus, we have
    \[\P[|X - b| < t]\le \sup_{x\in \R}\P[X\in [x,x+2t]]\le \frac{2t}{\sqrt{\Var(X)+(t^2/12)}}\le \frac{2t}{\sqrt{\var(X)}}.\qedhere\]
\end{proof}

\section{Proof of the decomposition}\label{app:decomp}

The following proof is given in \cite{yehuda2021slicing}.

\begin{proof}[Proof of \cref{prop:decomp}]
    For notational convenience, let $S:= \ceil{250\log n}$ and $W:= 10^4(k\log n/n)^{1/2}$. We want to find partitions $[k] = K_1\sqcup K_2$ and $[n] = N_1\sqcup N_2$ with $|N_2|\le n/2$ as well as a row rescaling $A'$ of $A$ satisfying the following conditions:
    \begin{enumerate}
    \item For every $i\in [k]$, the $i$-th row vector $a'_i\in \R^n$ of $A'$ satisfies $||a'_i|_{N_1}||_2 = 1$.

    \item For every $j\in N_1$, the $j$-th column vector $a'_{*j}\in \R^k$ of $A'$ satisfies $||a'_{*j}|_{K_1}||_2\le  W$.

    \item For every $i\in K_2$, the restriction $a_i'|_{N_2}$ of the $i$-th row vector $a_i'$ of $A'$ to $N_2$ contains at least $S$ scales of size at least $100$.
\end{enumerate}

    We use the following algorithm that starts with all the column indices in $N_1$ and all the row indices in $K_1$. Throughout the algorithm, we move column indices $j\in N_1$ with $||a'_{*j}|_{K_1}||>W$ from $N_1$ to $N_2$ one by one. As we move column indices from $N_1$ to $N_2$, if $||a'_i|_{N_1}||_2$ drops below a certain threshold for some row index $i\in K_1$, then we rescale the $i$-th row such that $||a'_i|_{N_1}||_2=1$. This rescaling will lead to a ``new scale'' of size at least $100$ in the vector $a'_i|_{N_2}$. We will move a row index $i\in K_1$ into $K_2$ when the corresponding row has been rescaled at least $S$ times (then $a'_i|_{N_2}$ will have at least $S$ scales of size at least $100$). 

    Let $\tau = 1/10001$ so that 
    \[\sqrt{\frac{1-\tau}{\tau}} = 100.\]
    Recall that by assumption, all entries of $A$ are nonzero. The algorithm is defined as follows.
    \begin{itemize}
        \item[1.] Initialize $N_1 = [n]$, $N_2 = \varnothing$, $K_1 = [k]$, $K_2 = \varnothing$. Initialize $A'$ as a row rescaling of $A$ such that for all $i=1,\dots,k$, the $i$-th row  $a'_i$ of $A$ satisfies $||a'_i|_{N_1}||_2 = 1$.
        \item[2.] While there exists an index $j\in N_1$ such that $\sum_{i\in K_1}(a'_{ij})^2\ge \tau W^2$:
        \begin{enumerate}
            \item[a.] Move $j$ from $N_1$ to $N_2$.
            \item[b.] For all $i\in K_1$ with $\sum_{j\in N_1}(a'_{ij})^2\le \tau$, renormalize the $i$-th row in $A'$ such that $||a'_i|_{N_1}||_2 = 1$.
            \item[c.] For all $i\in K_1$ such that the $i$-th row of $A'$ has been renormalized at least $S$ times throughout the algorithm, move $i$ from $K_1$ to $K_2$. 
        \end{enumerate}
        \item[3.] Renormalize every row in $A'$ such that $||a'_i|_{N_1}||_2 = 1$. Terminate and output the partitions $[n] = N_1\sqcup N_2$ and $[k] = K_1\sqcup K_2$ as well as the matrix $A'$.
    \end{itemize}
    We now show that the outputs of the algorithm indeed satisfy the conditions (1) to (3) above. Condition (1) clearly holds by the renormalization in Step 3 before outputting $A'$. 
    
    To see that condition (2) holds, first note that at the end of every iteration of the loop in Step 2, we have $\sum_{j\in N_1}(a'_{ij})^2> \tau$ and hence $||a'_i|_{N_1}||_2 >\sqrt{\tau}$ for each $i\in K_1$ (otherwise, row $i$ would have been renormalized in Step 2b). In particular, this means that in the renormalization in Step 3, for each $i\in K_1$, the $i$-th row is multiplied by a factor of at most $1/\sqrt{\tau}$. Furthermore, note that at the start of Step 3, we have $||a'_{*j}|_{K_1}||_2^2=\sum_{i\in K_1}(a'_{ij})^2\le \tau W^2$ for all $j\in N_1$ (otherwise, Step 2 would continue). This means that at the end of Step 3 we have $||a'_{*j}|_{K_1}||_2\le (1/\sqrt{\tau})\cdot \sqrt{\tau W^2}=W$, which establishes (2).

    To check condition (3), let us fix an index $i\in [k]$ and assume that $i\in K_2$ at the end of the algorithm. This means that the $i$-th row of $A'$ has been renormalized at least $S$ times in Step 2b. Let $N_1(1)\supseteq \dots \supseteq N_1(S)$ be the sets $N_1$ at the times where row $i$ was renormalized in Step 2b, and let $N_1(0)=[k]$. Note that for the final set $N_1$ at the end of the algorithm, we then have $N_1\subseteq N_1(S)$.
    
    We claim that $||a'_i|_{N_1(h)}||_2^2 \,/\,||a'_i|_{N_1(h-1)}||_2^2\le \tau $ for $h=1,\dots,S$. To see this, first note that the ratio $||a'_i|_{N_1(h)}||_2^2\, /\,||a'_i|_{N_1(h-1)}||_2^2$ is invariant under rescaling of the rows. Just before Step 2b is applied at the time where $N_1$ equals $N_1(h)$, we have $||a'_i|_{N_1(h)}||_2^2=\sum_{j\in N_1(h)}(a'_{ij})^2\le \tau$ (due to the fact that we are about to renormalize row $i$ in Step 2b), and $||a'_i|_{N_1(h-1)}||_2^2=1$ (from the previous renormalization or initial normalization of row $i$). Thus, we indeed have $||a'_i|_{N_1(h)}||_2^2\, /\,||a'_i|_{N_1(h-1)}||_2^2\le \tau $ at that time and therefore also at the end of the process. 

    Thus, for $h=1,\dots,S$, for the output matrix $A'$ we have
    \[\frac{||a'_i|_{N_1(h-1)\setminus N_1(h)}||_2^2}{||a'_i|_{N_1(h)}||_2^2}=\frac{\sum_{j\in N_1(h-1)\setminus N_1(h)}(a'_{ij})^2}{\sum_{j\in N_1(h)}(a'_{ij})^2}=\frac{\sum_{j\in N_1(h-1)}(a'_{ij})^2}{\sum_{j\in N_1(h)}(a'_{ij})^2}-1=\frac{||a'_i|_{N_1(h-1)}||_2^2}{||a'_i|_{N_1(h)}||_2^2}-1\ge \frac{1}{\tau}-1\]
    and therefore
    \[\frac{||a'_i|_{N_1(h-1)\setminus N_1(h)}||_2}{||a'_i|_{N_1(h)}||_2}\ge \sqrt{\frac{1}{\tau}-1}=\sqrt{\frac{1-\tau}{\tau}}=100.\]
    This means that $||a'_i|_{N_1(h-1)\setminus N_1(h)}||_2\ge 100 ||a'_i|_{N_1(h)}||_2\ge 100 ||a'_i|_{N_1(h)\setminus N_1(h+1)}||_2$ for $h=1,\dots,S-1$ and $||a'_i|_{N_1(S-1)\setminus N_1(S)}||_2\ge 100 ||a'_i|_{N_1(S)}||_2\ge 100 ||a'_i|_{N_1}||_2=100$ (where the equality sign is due to the final renormalization in Step 3). Thus, taking the disjoint sets $I_h=N_1(h-1)\setminus N_1(h)\subseteq [n]\setminus N_1=N_2$ for $h=1,\dots,S$ in \cref{def:scales}, we can conclude that the vector $a_i'|_{N_2}$ has at least $S$ scales of size at least $100$. 

    Now it only remains to show that $|N_2|\le n/2$. To this end, we study how the sum
    \begin{equation}\label{eq:qunatity-sum-track}
       \sum_{i\in K_1}\sum_{j\in N_1}(a'_{ij})^2 
    \end{equation}
    evolves throughout the algorithm. After Step 1, this sum is $\sum_{i\in [k]} ||a'_i||_2^2=k$. Every time a row gets renormalized in Step 2b, the sum in \eqref{eq:qunatity-sum-track} increases by at most 1 (and there is no other step in which it can increase). Thus, the total increase of \eqref{eq:qunatity-sum-track} throughout the whole algorithm is at most $kS$ (since each of the rescalings in Step 2b happens at most $S$ times for every row). On the other hand, every time an index $j$ is moved from $N_1$ to $N_2$,  \eqref{eq:qunatity-sum-track} decreases by at least $\tau W^2$. Thus, we have
    \[|N_2|\cdot \tau W^2 \le k+k S=k(S+1),\]
    which implies that $|N_2|\le k(S+1)/(\tau W^{2})\le 10001k(S+1)/W^{2}$. Substituting in the appropriate parameters $S= \ceil{250\log n}$ and $W= 10^4(k\log n/n)^{1/2}$, we can conclude that $|N_2|\le n/2$, as desired.
\end{proof}

\end{document}